\theoremstyle{plain}
\newtheorem{theorem}{Theorem}
\newtheorem{proposition}[theorem]{Proposition}%
\newtheorem{lemma}[theorem]{Lemma}%
\newtheorem{corollary}[theorem]{Corollary}
\theoremstyle{remark}
\newtheorem{definition}{Definition}
\newtheorem{example}{Example}%
\newtheorem{remark}{Remark}%
\def\eqd{\stackrel{\mbox{\scriptsize{\textup{d}}}}{=}}
\def\simiid{\stackrel{\mbox{\scriptsize{\rm iid}}}{\sim}}
\newcommand{\ddr}{\mathrm{d}}
\newcommand{\crm}{\tilde \mu}
\newcommand{\CRM}{\textup{CRM}}
\newcommand{\E}{\mathbb{E}}
\newcommand{\W}{\mathcal{W}}
\newcommand{\R}{\mathbb{R}}
\newcommand{\N}{\mathbb{N}}
\newcommand{\M}{\mathcal{M}}
\newcommand{\law}{\mathcal{L}}
\newcommand{\s}{_\mathcal{S}}
\newcommand{\bl}{\mathrm{BL}}
\newcommand{\wbl}{\mathcal{W}_\mathrm{BL}}
\newcommand{\ad}{\mathrm{AD}}
\newcommand{\wad}{\mathcal{W}_{\mathrm{AD}}}
\newcommand{\Wt}{\mathcal{W}_{\dXt}}
\newcommand{\dXt}{d_{\mathbb{X}, \text{t}}}
\title{Merging Rate of Opinions via \\ Optimal Transport on Random Measures}
\author{Marta Catalano\footnote{Luiss University, Italy. Email: \texttt{mcatalano@luiss.it}}~~and Hugo Lavenant\footnote{Bocconi University, Italy. Email: \texttt{hugo.lavenant@unibocconi.it}}}
\date{\today}
\begin{document}

\maketitle

\begin{abstract}
Random measures provide flexible parameters for Bayesian nonparametric models. Given two different priors for a random measure, we develop a natural framework to investigate the rate at which the corresponding posteriors merge, as the sample size increases. We define a new distance between the laws of random measures that is built as a Wasserstein distance on the ground space of unbalanced measures, endowed with the bounded Lipschitz metric. We develop tight analytical bounds for its specification to completely random measures, including the special case of Poisson and gamma random measures. The bounds are interpreted in terms of an adapted extended Wasserstein distance between the L\'evy measures and are used to investigate the merging between the posteriors of normalized gamma and generalized gamma priors. After a careful study on the identifiability of the law of the random measure, interesting asymptotic and finite-sample insights are derived without putting \emph{any} assumption on the true data generating process.

\medskip

\noindent \emph{Keywords}. Bayesian nonparametrics; Completely random measures; Cox process; Impact of the prior; Lévy measure; Merging of opinions; Optimal transport; Poisson process; Wasserstein distance.
\end{abstract}

\section{Introduction}

In a Bayesian framework the distribution of the observations $(X_i)_{i\ge1}$ is usually modelled through a random parameter $\theta$. As the data arrives the prior distribution $\mathcal{L}(\theta)$ gets updated through conditional probability into $\mathcal{L}(\theta|X_1,\dots,X_n)$, known as the posterior distribution. A long-standing topic in Bayesian statistics is to understand and quantify the inferential impact of different priors on $\theta$. A natural way to proceed is to compute, for two different priors $\mathcal{L}(\theta^1)$ and $\mathcal{L}(\theta^2)$, a distance between their posteriors
\[
d( \mathcal{L}(\theta^1|X_1,\dots, X_n), \mathcal{L}(\theta^2|X_1,\dots, X_n)).
\] 
When the parameter space is finite-dimensional there are many well-studied proposals for a distance $d$. For example, assuming that $\theta$ is in a Euclidean space, \citet{Ley2017} have used the Wasserstein distance.  This clearly does not cover Bayesian nonparametric (BNP) models, which use infinite-dimensional parameter spaces to increase their flexibility and avoid restrictive assumptions on the distribution of the data. In this work we focus on BNP models whose infinite-dimensional random parameter consists of a random measure $\crm$. The data on a Polish space $\mathbb{X}$ are modelled as conditionally independent and identically distributed (i.i.d.) given $\crm$, so that for every $n \ge 1$,
\begin{equation}
\label{eq:model_intro}
X_1,\dots,X_n | \crm \simiid t(\crm),
\end{equation}
for some transformation $t$. A popular choice for $t$ is the normalization $t(\crm) = \crm/ \crm(\mathbb{X})$, which defines a random probability measure whenever $0<\crm(\mathbb{X}) <+\infty$ almost surely (\citeauthor{Regazzini2003}, \citeyear{Regazzini2003}, \citeauthor{James2006}, \citeyear{James2006}, \citeauthor{James2009}, \citeyear{James2009}). As a classical example, \citet{Ferguson1974} shows that the Dirichlet process is recovered by normalizing a gamma random measure. Other common choices include kernel mixtures for densities and hazard rates (\citeauthor{Lo1984}, \citeyear{Lo1984}, \citeauthor{DykstraLaud1981}, \citeyear{DykstraLaud1981}, \citeauthor{James2005}, \citeyear{James2005}), exponential transformations for survival functions \citep{Doksum1974} and cumulative transformations for cumulative hazards \citep{Hjort1990}, together with the extensive literature they generated.\\
Consider two different priors $\mathcal{L}(\crm^1)$ and $\mathcal{L}(\crm^2)$ for model \eqref{eq:model_intro}. Our goal is to introduce a distance between posterior random measures,
\begin{equation}
\label{eq:impact}
d(\mathcal{L}(\crm^1|X_1,\dots,X_n), \mathcal{L}(\crm^2|X_1,\dots,X_n)),
\end{equation}
and use it to investigate three compelling questions that do not have a general answer in a non-dominated setting:
\begin{itemize}
\item[a.] Given two different nonparametric priors, will the distance between the posteriors vanish as the number of observations goes to infinity, that is, will there be \emph{merging of opinions} \citep{BlackwellDubins1962}?
\item[b.] How fast does the distance vanish, that is, what is the \emph{merging rate}? 
\item[c.] Is the finite-sample distance strictly decreasing as a function of sample size, that is, does the \emph{merging start} immediately or after a certain number of observations?
\end{itemize}

To this end, we devote the first part of this work to defining a natural distance between generic laws of random measures by relying on optimal transport (OT). OT studies the mass transportation problem between probability measures, and it is conveniently used to define distances between measures, the so-called Wasserstein distances, in terms of the minimal cost of transporting mass from one measure to another; see  \cite{villani2009optimal} and \cite{Santambrogio2015} for exhaustive accounts. In principle, it defines a distance between probabilities on any Polish space. Our proposal is then to consider the Polish space of bounded measures endowed with the bounded Lipschitz (BL) distance, in a similar spirit to the Wasserstein over Wasserstein distance or Hierarchical Optimal Transport distance (\citeauthor{Nguyen2016}, \citeyear{Nguyen2016}, \citeauthor{yurochkin2019hierarchical}, \citeyear{yurochkin2019hierarchical},  \citeauthor{dukler2019wasserstein}, \citeyear{dukler2019wasserstein},  \citeauthor{CatalanoLavenant2024}, \citeyear{CatalanoLavenant2024}). We study the topology of our \emph{Wasserstein over BL distance}, showing that it metrizes the weak convergence over the weak convergence, together with the convergence of the total mass of the mean measure. The metrization of weak convergence is an extremely useful property in BNP (\citeauthor{Nguyen2013}, \citeyear{Nguyen2013}, \citeauthor{Nguyen2016}, \citeyear{Nguyen2016}, \citeauthor{Ho2017}, \citeyear{Ho2017}) since one usually deals with (random) measures with atomic components and possibly disjoint supports. 

The asymptotic analysis of the distance in \eqref{eq:impact} requires analytical upper and lower bounds. To this end, we focus our attention on a class of random measures that is usually used for models in \eqref{eq:model_intro}. All the proposals we mentioned use completely random measures (CRMs) to define a prior on $\crm$. CRMs are a rich class of random measures defined by \citet{Kingman1967}, which includes Poisson, gamma, and generalized gamma random measures as specific examples. We refer to \cite{LijoiPruenster2010} for an overview on CRMs as unifying thread of Bayesian nonparametric models. The law of a CRM without fixed atoms is characterized by a L\'evy intensity $\nu$ on $(0,+\infty) \times \mathbb{X}$, which is a deterministic measure with potentially infinite mass, whose expression is usually very explicit and tractable. One of the core results of our work is to find a general upper bound of the Wasserstein over BL distance between CRMs in terms of an \emph{adapted discrepancy} between the L\'evy measures. We build this adapted discrepancy using the notion of extended Wasserstein distance $\W_*$ of (\citeauthor{FigalliGigli2010}, \citeyear{FigalliGigli2010}, \citeauthor{Guillen2019}, \citeyear{Guillen2019},  \citeauthor{Catalano2024}, \citeyear{Catalano2024}), which remarkably allows for comparisons between measures with infinite mass, extending it to the multivariate space $(0,+\infty) \times \mathbb{X}$ in a similar spirit to the adapted (or nested) Wasserstein distance of \cite{pflug2012distance}, \cite{bartl2024wasserstein}, and \cite{backhoff2022estimating}. When restricted to random measures with the same mean measure, the adapted extended Wasserstein discrepancy provides a distance between L\'evy intensities that is both analytically and computationally tractable, and dominates the Wasserstein over BL: this distance will play the main role in the second part of the work, which focuses on the use of the distance in Bayesian nonparametrics. We also extend its definition to random measures characterized by a random L\'evy measure, such as Cox processes and Cox CRMs. 

Before moving on to the analysis of \eqref{eq:impact}, it is necessary to investigate the identifiability of the random parameter $\crm$ in \eqref{eq:model_intro}, that is, which random measures $\crm$ induce the same $t(\crm)$. 
This is crucial to avoid outputting a strictly positive distance even though the laws of the observation are the same, that is, to erroneously detect a lack of merging of opinions. Identifiability depends on the choice of transformation $t$ and the class of distributions for $\crm$; we focus on a specific setting, namely normalization of CRMs. The task 
amounts to understanding which random measures induce the same normalization, dividing the random measures into equivalence classes accordingly, and choosing a representative for each equivalence class. This will lead to the definition of scaled CRMs $\crm \s$, such that $\E(\crm \s(\mathbb{X})) = 1$, and their generalizations to Cox CRMs.

This sets the ground for a principled framework to study the merging rate of opinions in terms of the distance between (the representatives of) the posteriors, as in \eqref{eq:impact}. Relying on the almost-conjugacy of models \eqref{eq:model_intro} with respect to a CRM prior \citep{James2009}, we are able to shed new light on the use of the Dirichlet process and the normalized generalized gamma process, with unexpected results both from the asymptotic (Question a. and b.) and finite-sample (Question c.) point of view. \\
We first focus on the Dirichlet process \citep{Ferguson1973}, which has been a cornerstone in the development of BNP, allowing one to make inference on the law of the observations through closed-form expressions. We use our setup to investigate to which extent different parameters for the Dirichlet process will lead to different learning outcomes, both in a finite sample and asymptotic scenario, without putting \emph{any} assumption on the ``true" data-generating process. In this context, we prove that the merging in Wasserstein over BL will always occur at a rate faster than 1/$n$, where $n$ is the sample size, independently of the dimension of the space $\mathbb{X}$ and model's parameters. Moreover, we show that if the two Dirichlet processes have the same concentration parameter the merging in adapted extended Wasserstein distance will start immediately with the first observation, whereas if they differ the distance may first start increasing, even if the base probabilities $P_0$ coincide. From a statistical perspective this finding is quite remarkable: even if two Bayesians have the same prior guess $P_0$ of the distribution of the data, the difference between their opinions may increase if they see a finite amount of data, according to how much confidence they are willing to put on their prior guesses.
\\
Next we move to generalized gamma CRMs \citep{Brix1999}, a versatile class of CRMs that recovers many well-known CRMs as special or limiting cases. This is often invoked as a generalization of the Dirichlet process that adds a discount parameter $\sigma$ to increase the flexibility of the model. Its normalization has been introduced by \cite{Lijoi2007} and used in many statistical analyses (e.g., \citeauthor{DeBlasi2009}, \citeyear{DeBlasi2009}, \citeauthor{Barrios2013}, \citeyear{Barrios2013}, \citeauthor{GriffinLeisen2017}, \citeyear{GriffinLeisen2017}, \citeauthor{CaronFox2017}, \citeyear{CaronFox2017}, \citeauthor{LauCripps2022}, \citeyear{LauCripps2022}, \citeauthor{Lee2023}, \citeyear{Lee2023}). We use our setup to investigate its merging with the Dirichlet process, again without putting any assumption on the data-generating process. We prove that the merging rate occurs at the rate $\max(n^{-1/(1+\sigma)}, k/n)$, where $k$ is the number of distinct observations in the dataset. This result unravels the regimes where using a normalized generalized gamma CRM will actually guarantee a different learning outcome with respect to the Dirichlet process, with the following take-home message: either the number of observations $n$ is sufficiently small or the number of distinct values $k$ should be sufficiently large, and how large it must be depends on the choice of discount $\sigma$, namely $k \gtrsim n^{\frac{\sigma}{1+\sigma}}$.

\medskip

{\bfseries Relation to previous work.} This work is related to many different topics, including optimal transport on (completely) random measures, the merging of opinions, frequentist consistency and contraction rates, sensitivity and robustness analysis. \\

\noindent
\emph{Optimal transport between random measures.} 
The idea of endowing a space of measures with a distance and using it as a cost for an optimal transport distance on the laws of random measures is not new. This has been done between laws of random probability measures when the distance is an optimal transport distance, with recent applications in machine learning \citep{yurochkin2019hierarchical,dukler2019wasserstein} and Bayesian statistics (\citeauthor{Nguyen2016}, \citeyear{Nguyen2016},  \citeauthor{CatalanoLavenant2024}, \citeyear{CatalanoLavenant2024}).
In a different direction, \cite{huesmann2016optimal} extended this ``OT over OT'' problem to random measures that are equivariant, which implies that their law is invariant under translation.
This corresponds to measures with infinite mass (because the space is unbounded), and the average transport cost is used as a measure of distance. \cite{erbar2023optimal} then studied the gradient flows in the geometry generated by this optimal transport problem. There have also been proposals of distances tailored for laws of point processes; see, e.g., \citet{barbourbrown1992}, \citet{schuhmacher2008new}, \citet{decreusefond2016}, and references therein. However, we focus on general random measures, not only point processes, which forces us to consider also the distribution of the jumps of the random measures, and not only the one of their atoms.\\
\noindent
\emph{Optimal transport on CRMs.} The first proposals to define a distance between CRMs with tractable upper bounds can be found in \cite{Catalano2020, Catalano2021}, where the authors propose using 
\[
\sup_A \W(\mathcal{L}(\crm^1(A)), \mathcal{L}(\crm^2(A)))
\]
as a distance between CRMs, where $A$ is any Borel set and $\W$ is the Wasserstein distance on the Euclidean space. The key progress of \citet{Catalano2024} has been to define an OT distance directly at the level of the L\'evy intensities, as follows. For a set $A$, $\mathcal{L}(\crm(A))$ is an infinitely divisible law with L\'evy measure $\nu_A(\cdot) = \nu(\cdot, A)$. One can then use the extended Wasserstein distance $\W_*$ to define
\begin{equation}
\label{def:jasa}
\sup_A \W_*(\nu_A^1, \nu_A^2),
\end{equation}
which is a distance between CRMs \citep{Catalano2024} and leads to closed-form expressions whenever i) the CRMs do not have fixed atoms; ii) the CRMs are almost surely (a.s.) finite and homogeneous, i.e., $ \nu^i = \rho^i \otimes P^i$ is a product measure; iii) the CRMs share the same base measure, i.e., $P^1= P^2$. These conditions are usually met in the setting of \cite{Catalano2024} but are very restrictive in our framework since i) and ii) typically hold \emph{a priori} but not \emph{a posteriori}, and iii) entails that the atoms of the CRM have the same distribution, which is likely to be the first feature to change when choosing a different prior. Therefore, we need to define the adapted extended Wasserstein discrepancy, a new OT distance on L\'evy intensities with same mean measure that considers the joint distribution of the atoms and the jumps instead. Remarkably, it coincides with \eqref{def:jasa} whenever conditions i), ii), and iii) apply (see Remark~\ref{rem:wass_homogeneous} below).\\
\noindent
\emph{Merging of opinions.} The notion of merging of opinions was introduced and thoroughly studied in the acclaimed work of \citet{BlackwellDubins1962}, which holds for a very general setup beyond exchangeability. In principle, their results cover model \eqref{eq:model_intro}. However, in this setting, their assumptions can be considered restrictive for at least two reasons. First of all the merging is guaranteed for prior laws on $(X_n)_{n \ge 1}$ that are absolutely continuous with one another. Whereas this holds in most parametric settings, it is not usually the case in nonparametric ones, where non-dominated models are the norm. As a classical example, two Dirichlet processes with different total base measures are mutually singular \citep{KorwarHollander1973}. It has also been observed by \citet{DiaconisFreedman1986}, who avoid this condition by assessing the merging under a weaker topology. Moreover, both \citet{BlackwellDubins1962} and \citet{DiaconisFreedman1986} assume that the data-generating mechanism coincides with one of the models under consideration, which may well not be the case in practice. Our aim is to overcome both restrictions and build a framework that does not require \emph{any} assumption on the data. Indeed, we propose one of the few asymptotic analyses that does not put strong assumptions on the distribution of the data (e.g., to be a sampled from the model or i.i.d. from a true distribution): this is possible because we focus on the similarities between specific versions of the posteriors rather than on the adherence of the learning mechanism to the unknown \emph{truth}.\\
\noindent
\emph{Frequentist consistency and contraction rates.} In the same spirit, our framework goes beyond frequentist consistency properties in that it does not assume the data to be i.i.d. from a true distribution. However, if one were willing to make this assumption, consistency of the model with respect to each prior would entail merging of opinions but would not provide meaningful information about the convergence rate. Indeed the contraction rate of each prior to the truth could be (and usually is) considerably slower than the merging rate. For example, in this work we show that two Dirichlet processes with different parameters always merge at the rate 1/$n$, independently of the dimension of the space. This is much faster than the parametric rate of $1/\sqrt{n}$, which is the best possible one for the contraction rate, and moreover we expect the latter to deteriorate when the dimension increases. A merging rate that is faster than the contraction rate for a class of priors may suggest that it is the empirical part of the posterior - rather than the prior - that is slowing down convergence.
Thus, not only our framework has the advantage of not putting assumptions on the data-generating process, it also provides new perspectives when one is willing to put them.\\
\noindent
\emph{Sensitivity analysis and robustness.} To our knowledge, this work provides the first theoretical setup of \emph{global sensitivity} analysis in Bayesian nonparametric non-dominated models, in the sense that it studies the performance of the model of interest with respect to any choice of prior. Previous studies on sensitivity for BNP models usually consist in varying the parameters of the priors or taking functional perturbations \citep{Nieto-BarajasPruenster2009} and providing empirical comparisons between the posteriors. In a dominated setting, more classical distances between (posterior) densities can be considered, as in \citet{SahaKurtek2019}. A recent work of \citet{Giordano2022} focuses on \emph{local sensitivity}, which studies robustness with respect to small perturbation of the prior through differential approximations \citep{Gustafson1996} and can be very useful when posterior distributions are not known in closed form. Since local sensitivity focuses on small perturbations, a good impact is always a small impact. On the contrary, our framework does not limit itself to small perturbations, and thus a big impact of the prior can also be regarded as a benign sign of flexibility of the model. Although our statistical analysis focuses on the latter aspect, our results may be read or adapted to treat robustness as well. 

\medskip

{\bfseries Structure of the paper.} In Section~\ref{sec:distances} we define the Wasserstein over BL distance between the laws of random measures. We characterize its topology and provide a general lower bound, specializing it to Poisson random measures. In Section~\ref{sec:crm} we focus on CRMs and provide a universal upper bound in terms of the adapted extended Wasserstein discrepancy between their L\'evy intensities. The bound is specialized to Poisson random measures and extended to Cox CRMs with a random L\'evy intensity. In Section~\ref{sec:identifiability} we establish the class of (Cox) CRMs that are identifiable for model~\eqref{eq:model_intro} with respect to normalization, leading to the notion of scaled (Cox) CRMs. The adapted extended Wasserstein discrepancy is a distance on this class: in Section~\ref{sec:merging} we study its finite sample and asymptotic behavior to investigate the merging of opinions for BNP models. 
All proofs are deferred to the Supplementary Material \citep{CatalanoLavenant2025proofs}. Table~\ref{table:distances} summarizes the distances that we use in this work. 

\medskip

{\bfseries Notations for random quantities.} We denote deterministic scalar quantities with lowercase letters (e.g., $c,\alpha$) and real-valued random variables with capital letters (e.g., $C,U$). Random objects on spaces of measures are distinguished by a $\sim$, e.g., $\tilde{\nu}$ denotes a random measure while $\nu$ a deterministic measure.

\begin{table}
\begin{center}
\caption{Summary of the different distances we use in this work.}
\begin{tabular}{ |c|c|c|c| }
\hline 
Symbol & Name & Space & Introduced in \\
\hline
$\W_{d_\mathbb{X}}$ & {\small $1$-Wasserstein distance.} & $\mathcal{P}(\mathbb{X})$ & {\small Eq. \eqref{eq:definition_W1}}   \\
$\W_{\dXt}$ & {\small$1$-Wasserstein distance with metric $\dXt$.} & $\mathcal{P}(\mathbb{X})$ & {\small Eqs. \eqref{eq:definition_W1} and \eqref{eq:def_dxt}}  \\
$\W_*$ & {\small Extended Wasserstein distance.} & $\mathcal{M}((0, + \infty))$ & {\small Def. \ref{def:extended_wass}}   \\
$\bl$ & {\small Bounded Lipschitz distance.} & $\mathcal{M}_B(\mathbb{X})$ & {\small Eq. \eqref{def:bl}}  \\
$\wbl$ & {\small Wasserstein over bounded Lipschitz.} & $\mathcal{P}(\mathcal{M}_B(\mathbb{X}))$ & {\small Def. \ref{def:wbl}}  \\
$\ad$ & {\small Adapted extended Wasserstein discrepancy. } & $\mathcal{M}_m((0, + \infty) \times \mathbb{X})$ & {\small Def. \ref{def:distance_levy} and Prop. \ref{th:distance}}  \\
$\wad$ & {\small Wasserstein distance over $\ad$.} & $\mathcal{P}(\mathcal{M}_m((0, + \infty) \times \mathbb{X}))$ & {\small Eq. \eqref{eq:definition_W1} and Def. \ref{def:distance_levy}} \\
 \hline
\end{tabular}
\label{table:distances}
\end{center}
\end{table}
\section{Distance between measures and random measures}
\label{sec:distances}

We start by discussing distances between probability measures and non-negative measures, with classical notions such as the Wasserstein distance and the bounded Lipschitz distance. Next, we define a distance between laws of random measures by lifting the distance between random measures into a distance between probabilities over random measures via optimal transport.

\subsection{Distances between measures}

We work on a Polish space $\mathbb{X}$ endowed with a distance $d_\mathbb{X}$ and its Borel $\sigma$-algebra $\mathcal{B}(\mathbb{X})$. We write $\mathcal{P}(\mathbb{X})$ for the set of probability measures, $\M(\mathbb{X})$ for the space of locally finite measures, and $\mathcal{M}_B(\mathbb{X}) \subseteq \M(\mathbb{X})$ for the set of finite non-negative measures. The spaces $\mathcal{P}(\mathbb{X})$ and $\M_B(\mathbb{X})$ are endowed with the topology of weak convergence, making the maps $\mu \mapsto \int f \, \ddr \mu$ continuous for $f \in C_b(\mathbb{X})$ the space of continuous and bounded functions. It turns them into Polish spaces, and they are endowed with their Borel $\sigma$-algebra, which is the smallest one making the maps $\mu \mapsto \int f \, \ddr \mu$ measurable for $f \in C_b(\mathbb{X})$.  

We recall that on a metric space $(\mathbb{Y},d_\mathbb{Y})$, given $P^1,P^2$ two elements of $\mathcal{P}(\mathbb{Y})$, the optimal transport distance (of order $1$) between them is defined as 
\begin{equation}
\label{eq:definition_W1}
\W_{d_\mathbb{Y}}(P^1,P^2) = \min_{\pi \in \Pi(P^1,P^2)} \E_{(X,Y) \sim \pi} (d_\mathbb{Y}(X,Y)) = \sup_f \left\{ \int_\mathbb{Y} f \, \ddr P^2 - \int_\mathbb{Y} f \, \ddr P^1 \ : \  f \text{ is } 1 \text{-Lipschitz} \right\},
\end{equation}
where the minimum is taken over $\Pi(P^1,P^2)$ the set of joint probability distributions having marginals $P^1,P^2$, and the equality follows by Kantorovich duality. Here $1$-Lipschitz means $|f(x) - f(y)| \leq d_\mathbb{Y}(x,y)$ for all $x,y \in \mathbb{Y}$. In the sequel we will use this definition for $(\mathbb{Y},d_\mathbb{Y}) = (\mathbb{X},d_\mathbb{X})$, but also when $\mathbb{Y} = \M_B(\mathbb{X})$ or $\mathbb{Y} = \M((0,+\infty) \times \mathbb{X})$ is a space of measures. 

Restricting to the case $(\mathbb{Y},d_\mathbb{Y}) = (\mathbb{X},d_\mathbb{X})$, note that with $\W_{d_\mathbb{X}}$ we cannot compare measures of different mass, that is, $\W_{d_\mathbb{X}}(P^1,P^2)$ becomes infinite if $P^1(\mathbb{X}) \neq P^2(\mathbb{X})$. This can be seen by taking constant functions $f(x) =c$ and letting $c \to \pm \infty$. 
A natural generalization is the \emph{Bounded Lipschitz} distance $\bl$, which replaces the supremum over $1$-Lipschitz functions in \eqref{eq:definition_W1} by a supremum over $1$-bounded and $1$-Lipschitz functions. Specifically, we define 
\begin{equation}
\label{def:bl}
\bl(\mu^1,\mu^2) = \sup_f \left\{ \int_\mathbb{X} f \, \ddr \mu^2 - \int_\mathbb{X} f \, \ddr \mu^1 \ : \  f \text{ is } 1\text{-bounded and } 1 \text{-Lipschitz} \right\},
\end{equation}
where $1$-bounded means $|f(x)| \leq 1$ for all $x \in \mathbb{X}$. The $\bl$ distance metrizes the weak convergence on $\mathcal{M}_B(\mathbb{X})$ 
(see, e.g., Theorem 8.3.2 in \cite{bogachev2007measure}). We note that some works call $\bl$ the Kantorovich-Rubinstein or flat metric and rather define the function class of the Bounded Lipschitz distance to be $\{f: \|f\|_L + \|f\|_{\infty} \le 1\}$, where $\|\cdot\|_L$ is the Lipschitz norm. The induced distance, first defined in \cite{fortet1953convergence} to study the convergence of the empirical measure, is easily shown to be strongly equivalent to our definition (see, e.g., Section 8.3 of \cite{bogachev2007measure}).

Regarding the comparison between $\W_{d_\mathbb{X}}$ and $\bl$ on $\mathcal{P}(\mathbb{X})$, note that we always have $\bl \leq \W_{d_\mathbb{X}}$. However, the discrepancy can be large: $\bl$ is always bounded by $2$, while $\W_{d_\mathbb{X}}$ can be arbitrary very large if the first moments of the measures are large. 
To perform a finer comparison, we introduce the ``truncated'' distance
\begin{equation}
\label{eq:def_dxt}
\dXt  = \min(d_\mathbb{X},2). 
\end{equation}
This distance metrizes the same topology as $d_\mathbb{X}$ but is bounded. Moreover, every $1$-Lipschitz and $1$-bounded function is $1$-Lipschitz for $\dXt$, while every $1$-Lipschitz function for $\dXt$ is $1$-Lipschitz for $d_\mathbb{X}$ and $2$-bounded, so that $f/2$ is $1$-Lipschitz and $1$-bounded. Consequently, on $\mathcal{P}(\mathbb{X})$, 
\begin{equation}
\label{eq:comparison_BL_W}
\frac{1}{2} \W_{\dXt} \leq \bl \leq \W_{\dXt} \leq \W_{d_\mathbb{X}}.
\end{equation} 
Thus, $\W_{\dXt}$  and  $\bl$ are strongly equivalent metrics when restricted to $\mathcal{P}(\mathbb{X})$, but $\W_{\dXt}$ easier to interpret as it corresponds to a classical Wasserstein distance, though with respect to a 
truncated distance.

\subsection{Distances between random measures}

By a random measure $\tilde{\mu}$, we mean a measurable map from a probability space into $\mathcal{M}_B(\mathbb{X})$. As an example of random measure on $\mathbb{X}$, the reader could think of point processes such as Poisson random measures, or at completely random measures, which will be studied in the next section.
The law of a random measure is an element of $\mathcal{P}(\mathcal{M}_B(\mathbb{X}))$. The latter space is endowed with the topology of weak convergence when $\mathcal{M}_B(\mathbb{X})$ is endowed with the topology of weak convergence, and we say that a sequence of random measures $(\tilde{\mu}_n)_n$ converges in distribution to $\tilde{\mu}$ if the law of $\tilde{\mu}_n$ converges weakly to the one of $\tilde{\mu}$ as $n \to + \infty$. It is the case if and only if, for any $f \in C_b(\mathbb{X})$, the sequence of random variables $\int f \, \ddr \tilde{\mu}_n$ converges weakly to $\int f \, \ddr \tilde{\mu}$ as $n \to + \infty$ \cite[Theorem 4.19]{Kallenberg2017}. 

The definition in \eqref{eq:definition_W1} makes sense for any ground metric space $(\mathbb{Y},d_\mathbb{Y})$, and we will use it with $(\mathbb{Y},d_\mathbb{Y}) = (\M_B(\mathbb{X}),\bl)$. Indeed, interpreting laws of random measures as probability distributions over $\mathcal{M}_B(\mathbb{X})$, it is possible to lift the distance $\bl$ into a distance over the space $\mathcal{P}(\mathcal{M}_B(\mathbb{X}))$. We call the resulting distance the ``Wasserstein over Bounded Lipschitz'' distance, denoted by $\wbl$. 
If we were to look at laws of random probability measures, that is, elements of $\mathcal{P}(\mathcal{P}(\mathbb{X}))$, then an analog of the Wasserstein over Bounded Lipschitz distance could be the ``Wasserstein over Wasserstein'' distance, which has been proposed and studied in different contexts (\citeauthor{Nguyen2016}, \citeyear{Nguyen2016}, \citeauthor{yurochkin2019hierarchical}, \citeyear{yurochkin2019hierarchical},  \citeauthor{dukler2019wasserstein}, \citeyear{dukler2019wasserstein},  \citeauthor{CatalanoLavenant2024}, \citeyear{CatalanoLavenant2024}). We note that our Wasserstein over BL can be used on laws of random probabilities as well and requires weaker assumptions on first moments than the Wasserstein over Wasserstein distance.

\begin{definition}
\label{def:wbl}
If $\mathbb{Q}^1$, $\mathbb{Q}^2 \in \mathcal{P}(\mathcal{M}_B(\mathbb{X}))$ are two laws of random measures, we define the distance $\wbl$ between them as 
\begin{equation*}
\wbl(\mathbb{Q}^1,\mathbb{Q}^2) = \min_{\pi \in \Pi(\mathbb{Q}^1,\mathbb{Q}^2)}  \E_{(\tilde{\mu}^1,\tilde{\mu}^2) \sim \pi} (\bl(\tilde{\mu}^1,\tilde{\mu}^2)). 
\end{equation*}
\end{definition}

A random measure $\tilde{\mu}$ has finite mean if $\E(\tilde{\mu}(\mathbb{X})) < + \infty$. In this case its mean measure $\E(\tilde{\mu})$, defined by $\E(\tilde{\mu})(A) = \E(\tilde{\mu}(A))$, is an element of $\mathcal{M}_B(\mathbb{X})$.  

\begin{proposition}
\label{prop:topology_wbl}
The quantity $\wbl$ defines a complete distance over the laws of random measures with finite mean. If $(\tilde{\mu}_n)_n$, and $\tilde{\mu}$ are random measures with finite mean, then $\wbl(\law(\tilde{\mu}_n),\law(\tilde{\mu}))$ converges to $0$ as $n \to + \infty$ if and only if $(\tilde{\mu}_n)_n$ converges in distribution to $\tilde{\mu}$ and $\E(\tilde{\mu}_n(\mathbb{X}))$ converges to $\E(\tilde{\mu}(\mathbb{X}))$.     
\end{proposition}

We also provide a universal lower bound of this distance. Meaningful upper bounds are delayed until the next section, when we assume more structure on the random measures. 

\begin{proposition}
\label{prop:lower_bound_WBL}
For any random measures $\tilde{\mu}^1, \tilde{\mu}^2$ with finite mean, there holds
\begin{equation*}
\bl(\E(\tilde{\mu}^1), \E(\tilde{\mu}^2)) \leq \wbl(\mathcal{L}(\tilde{\mu}^1), \mathcal{L}(\tilde{\mu}^2)).   
\end{equation*}
\end{proposition}

\begin{example}
\label{ex:poisson_lower}
Let $\tilde{\mathcal{N}}^1$ and $\tilde{\mathcal{N}}^2$ be two Poisson random measures of intensities $\alpha_i P^i_0$, for $i=1,2$, where $\alpha_i>0$ and $P^i_0$ is a probability on $\mathbb{X}$. The lower bound of Proposition~\ref{prop:lower_bound_WBL} yields
\begin{equation*}
\bl(\alpha_1 P^1_0, \alpha_2 P^2_0) \leq \wbl(\mathcal{L}(\tilde{\mathcal{N}}^1), \mathcal{L}(\tilde{\mathcal{N}}^2)).   
\end{equation*}
In particular, taking $f=1$, we have $|\alpha_1 - \alpha_2| \le \wbl(\mathcal{L}(\tilde{\mathcal{N}}^1), \mathcal{L}(\tilde{\mathcal{N}}^2))$. On the other hand, if $\alpha_1 = \alpha_2 = \alpha$, then by~\eqref{eq:comparison_BL_W} we have $\frac{\alpha}{2} \W_{\dXt}(P^1_0,P^2_0)  \le \wbl(\mathcal{L}(\tilde{\mathcal{N}}^1), \mathcal{L}(\tilde{\mathcal{N}}^2))$. 
\end{example}

\section{Completely random measures and bounds via optimal transport on Lévy intensities}
\label{sec:crm}

In our applications to Bayesian statistics, the random measures are not arbitrary but rather possess some additional structure that will lead to tractable computations. In this section we focus on completely random measures (CRMs), a key ingredient to building many Bayesian nonparametric priors, and refer to \cite{LijoiPruenster2010} for an overview.  We derive a tractable upper bound on the $\wbl$ distance between CRMs and Cox CRMs. These upper bounds will be characterized in terms of an optimal transport distance on Lévy intensities, which is interesting per se and will be further investigated in the next sections. 

In the original definition of \citet{Kingman1967}, completely random measures are random measures such that, for every disjoint $A_1,\dots, A_n \in \mathcal{B}(\mathbb{X})$, the random variables $\crm(A_1),\dots,\crm(A_n)$ are mutually independent. This definition entails that any CRM can be decomposed in the sum of a deterministic measure, a discrete random measure with fixed atoms and a discrete random measure without fixed atoms, whose law is characterized by a Lévy intensity. Since it is the third component that plays the main role in Bayesian nonparametrics, we will give a slightly more restrictive definition of completely random measure as follows. 

Recall that $\M(\mathbb{Y})$ is the set of locally finite non-negative measures, possibly with infinite mass. Mostly we consider $\mathbb{Y} = (0, + \infty)$ or $\mathbb{Y} =(0,+\infty) \times \mathbb{X}$. Here and after, $\eqd$ denotes equality in distribution.

\begin{definition}
A random measure $\crm$ is a completely random measure (CRM) if there exists $\nu \in \M((0,+\infty) \times \mathbb{X})$ with $\iint \min(1,s) \, \ddr \nu(s,x) < + \infty$ such that
\begin{equation}
\label{eq:def_crm_from_levy}
\crm(A) \eqd \iint_{(0, + \infty) \times A} s \, \ddr \tilde{\mathcal{N}}(s,x),
\end{equation}
where $\tilde{\mathcal{N}}$ is a Poisson random measure on $(0,+\infty) \times \mathbb{X}$ of intensity $\nu$.
We write $\crm \sim \textup{CRM}(\nu)$ and refer to $\nu$ as the L\'evy intensity of $\crm$.
\end{definition}

Thanks to Campbell's theorem, a CRM $\crm$ has a finite mean $\E(\crm(\mathbb{X}))<+\infty$ if and only if $M_1(\nu) = \iint s \, \ddr \nu(s,x)  < + \infty$. Note that we do a slight abuse of notation, as $\nu$ is a measure on the product space $(0,+\infty) \times \mathbb{X}$, and by $M_1(\nu)$ we denote the first moment only with respect to the first coordinate.

\begin{proposition}
\label{prop:canonical_nu}
Let $\crm \sim \textup{CRM}(\nu)$ with finite mean. Then $\nu$ can be uniquely decomposed as $\ddr \nu(s,x) =  \ddr \rho_x(s) \ddr P_0(x)$, where $ P_0$ is a probability measure and $\rho$ is a transition kernel on $\mathbb{X} \times \mathcal{B}(\R_+)$ that satisfy, for $P_0$-almost every (a.e.) $x \in \mathbb{X}$,
\begin{equation*}
\int_0^{+ \infty} s \, \ddr \rho_x(s) = \E(\crm(\mathbb{X})); \qquad P_0(\cdot) = \frac{\E(\crm(\cdot))}{\E(\crm(\mathbb{X}))}.
\end{equation*}
We refer to $\ddr \nu(s,x) =  \ddr \rho_x(s) \ddr P_0(x)$ as the canonical decomposition of $\nu$.
\end{proposition}

\begin{remark} 
\label{rem:intuition}
Intuitively, the measure $P_0$ describes the distribution of atoms, while $\rho_x$ determines the conditional distribution of the jump size at location $x$. This can be understood by approximating \eqref{eq:def_crm_from_levy} with a compound distribution, so that for every $\epsilon>0$ and set $A$,
\begin{equation}
\label{eq:compound_poisson}
\iint_{(\epsilon, + \infty) \times A} s \, \ddr \tilde{\mathcal{N}}(s,x) \eqd \sum_{i=0}^N S_i \, \delta_{X_i},
\end{equation}
where by letting $r = \nu((\epsilon,+\infty) \times A)$, $N$ is a Poisson random variable with mean $r$, and $(S_i,X_i) \sim P$ independently, with $P =  \nu|_{(\epsilon,+\infty) \times A}/r$ being the normalized restriction of $\nu$ to $(\epsilon,+\infty) \times A$.
\end{remark}

We move on to the definition of a distance at the level of Lévy intensities that is
related to the distance $\wbl$ defined in the previous section. The underlying idea is simple: if we can find a coupling between the Lévy intensities $\nu^1, \nu^2$ and if $\crm^i \sim \textup{CRM}(\nu^i)$ for $i=1,2$, then we have a coupling between $\crm^1$ and $\crm^2$, which means we should be able to find an upper bound of $\wbl(\mathcal{L}(\crm^1),\mathcal{L}(\crm^2))$. The challenge is to understand how $\E(\bl(\crm^1,\crm^2))$ relates to a quantity that depends only on the Lévy intensities.

This leads to the definition of a discrepancy between Lévy intensities $\ddr \nu(s,x) =  \ddr \rho_x(s) \ddr P_0(x)$, which is ultimately justified by Theorem~\ref{theo:upper_bound_WBL} below. We observe that the two components $\rho_x$ and $P_0$ play very different roles, as highlighted in Remark~\ref{rem:intuition}. Indeed,  $P_0$ is a probability measure on $\mathbb{X}$ describing the law of the atoms, whereas $\rho_x$ is $P_0$-a.s. a measure with infinite mass on $(0,+\infty)$ that determines the conditional law of their corresponding jumps. We will thus preserve their different nature, in the spirit of the \emph{nested Wasserstein distance} for probability measures, which was originally introduced by \citet{pflug2012distance} and is often termed \emph{adapted Wasserstein distance} in financial mathematics (\citeauthor{backhoff2022estimating}, \citeyear{backhoff2022estimating}, \citeauthor{bartl2024wasserstein}, \citeyear{bartl2024wasserstein}).
Specifically, we impose that if an atom $x$ is coupled to an atom $y$ on the space $\mathbb{X}$, then the whole measure $\rho_x$ is also coupled to $\rho_y$.

A key tool is the definition of an ``extended'' Wasserstein distance to compare measures of infinite mass $\rho_x$ and $\rho_y$. A (quadratic) Wasserstein distance between measures of different and potentially infinite masses has been proposed by \citet{FigalliGigli2010} through the notion of extended coupling. We refer to \citet{Catalano2024} and \citet{Guillen2019} for a concise overview. In the present work, we will only consider measures on the half-real line $\mathbb{Y} = (0,+\infty)$, and look at Wasserstein distances of order $1$. We refer to \citet[Proposition~4]{Catalano2024} for a link between the definition below and the previous works of \cite{FigalliGigli2010} and \cite{Guillen2019}. 

\begin{definition}
\label{def:extended_wass}
The extended Wasserstein distance (of order $1$) between two positive Borel measures $\rho^1, \rho^2$ on $(0,+\infty)$ with finite  first moments about $0$ is defined as
\begin{equation}
\W_{*}(\rho^1, \rho^2) = \int_0^{+\infty} |U_1(s) - U_2(s)| \ddr s,
\end{equation}
where, for $i=1,2$ and $s > 0$, $U_i(s) = \rho^i (s, +\infty)$ is the tail integral of $\rho^i$.
\end{definition}

Following the aforementioned literature, and using the extended Wasserstein distance, we now present what we call the ``adapted extended Wasserstein discrepancy''.

\begin{definition}
\label{def:distance_levy}
Let $\nu^1, \nu^2$ be Lévy intensities with $M_1(\nu^i ) = \iint s \, \ddr \nu^i(s,x) <+\infty$ and canonical decomposition $\ddr \nu^i(s,x) =  \ddr \rho^i_x(s) \ddr P^i_0(x)$ for $i=1,2$. We define the adapted extended Wasserstein discrepancy between $\nu^1$ and $\nu^2$ as
\[
\ad(\nu^1,\nu^2) =  \inf_{\pi \in \Pi(P_0^1, P_0^2)} \; \E_{(X,Y) \sim \pi} \left( \frac{M_1(\nu^1) + M_1(\nu^2)}{2} \dXt (X,Y) +  \W_{*}(\rho_X^1,\rho_Y^2) \right).
\]
\end{definition}

The presence of the first moments $M_1(\nu^1)$ and $M_1(\nu^2)$ may seem counterintuitive, and prevents $\ad$ from being a distance. It is however necessary in order for the next theorem to hold. Note also that we use the truncated distance $\dXt$ instead of the original distance $d_\mathbb{X}$. This is again because of the next theorem as it gives a sharper upper bound; see also~\eqref{eq:comparison_BL_W} and the discussion related to it to understand why $\dXt$ is a natural distance to consider. Thanks to the use of this truncated distance, note that  
we do not need to require the finiteness of any moment of $P_0^i$ in the definition of $\ad$, for $i=1,2$. Moreover,  $\int_0^{+ \infty} U_{\rho}(s) \, \ddr s = \int_0^{+ \infty} s \rho(s) \ddr s$ by Fubini's theorem. Thus, $\W_*(\rho^1_X, \rho^2_X) \leq M_1(\nu^1) + M_2(\nu^2)$, which together with the boundedness of $\dXt$ yields
\begin{equation}
\label{eq:bound_ad_universal}
\ad(\nu^1,\nu^2) \leq 2(M_1(\nu^1) + M_1(\nu^2)).
\end{equation}
We move to a key theorem justifying the introduction of this adapted extended Wasserstein discrepancy.

\begin{theorem}
\label{theo:upper_bound_WBL}
If $\crm^i \sim \textup{CRM}(\nu^i)$ for $i=1,2$ have finite mean, then
\begin{equation*}
\wbl(\mathcal{L}(\crm^1),\mathcal{L}(\crm^2)) \leq \ad(\nu^1,\nu^2).
\end{equation*}
\end{theorem}

\begin{remark}
\label{rem:wass_homogeneous}
The adapted extended Wasserstein discrepancy $\ad$ is particularly convenient when at least one of the two L\'evy intensities is homogeneous, i.e., $\nu^1 = \rho^1 \otimes P_0^1$ or, equivalently, $\rho_X^1$ does not depend on $X$ a.s., with $X\sim P_0$. Many of the most commonly used CRMs fall into this framework, such as Gamma CRMs, with $\rho(s) = \alpha s^{-1} \exp(-bs)$, or generalized gamma CRMs, with  $\rho(s) = \alpha s^{-(1+\sigma)} \Gamma(1-\sigma)^{-1} \exp(-bs)$, where $\alpha,b>0$ and $0 \le \sigma < 1$ are parameters.
In such case, $\W_{*}(\rho_X^1,\rho_Y^2)^p = \W_{*}(\rho^1,\rho_Y^2)^p$ does not depend on the coupling and thus
\[
\ad(\nu^1, \nu^2) = \frac{M_1(\nu^1) + M_1(\nu^2)}{2} \W_{\dXt}(P_0^1, P_0^2) + \E_{Y \sim P_0^2}(\W_{*}(\rho^1,\rho_Y^2)).
\]
When both L\'evy intensities are homogeneous with the same base probability it further simplifies to 
\[
\ad(\nu^1, \nu^2) = \W_{*}(\rho^1,\rho^2).
\]
\end{remark}

\begin{example}
\label{ex:poisson_upper}
Let $\tilde{\mathcal{N}}^1$ and $\tilde{\mathcal{N}}^2$ be two Poisson random measures of intensities $\alpha_i P^i_0$, for $i=1,2$, where $\alpha_i>0$ and $P^i_0$ is a probability on $\mathbb{X}$. Then $\tilde{\mathcal{N}}^i$ is a CRM with Lévy intensity $\ddr \nu^i = \alpha_i \delta_{1}(\ddr s) \ddr P^i_0(x)$. In particular they are homogeneous CRMs with $\W_*(\rho^1,\rho^2) = |\alpha_1 - \alpha_2|$. Thus, putting together Theorem~\ref{theo:upper_bound_WBL} and Example~\ref{ex:poisson_lower}, 
\begin{equation*}
\bl(\alpha_1 P^1_0,\alpha_2 P^2_0) \le \wbl(\mathcal{L}(\tilde{\mathcal{N}}^1),\mathcal{L}(\tilde{\mathcal{N}}^2)) \leq \frac{\alpha_1 + \alpha_2}{2} \W_{\dXt}(P^1_0,P^2_0) + |\alpha_1 - \alpha_2|.
\end{equation*}
If $P_0^1 = P_0^2$ then from Example~\ref{ex:poisson_lower} we see that the lower and upper bound coincide, so that 
\begin{equation*}
\wbl(\mathcal{L}(\tilde{\mathcal{N}}^1),\mathcal{L}(\tilde{\mathcal{N}}^2)) = |\alpha_1 - \alpha_2|.
\end{equation*}
On the other hand, if $\alpha_1 = \alpha_2 = \alpha$, then again from Example~\ref{ex:poisson_lower} we deduce 
\begin{equation*}
\frac{\alpha}{2} \W_{\dXt}(P^1_0,P^2_0) \le \wbl(\mathcal{L}(\tilde{\mathcal{N}}^1),\mathcal{L}(\tilde{\mathcal{N}}^2)) \leq \alpha \W_{\dXt}(P^1_0,P^2_0),
\end{equation*}
which characterizes the value of $\wbl(\mathcal{L}(\tilde{\mathcal{N}}^1),\mathcal{L}(\tilde{\mathcal{N}}^2))$ up to a constant multiplicative factor.
\end{example}

\cite{schuhmacher2008new} have defined a distance on the laws of point processes with a finite number of points by defining a suitable transport problem tailored for point processes. Interestingly, in \citet[Remark 4.2]{schuhmacher2008new} the authors also provide an upper bound of their distance evaluated on Poisson random measures in terms of the Wasserstein distance between the normalized intensities. Direct comparisons are not meaningful, as their distance and our distance $\W_\bl$ differ, but it illustrates the need of relating a distance between Poisson random measures to a distance between their intensities, as the latter is often more tractable and interpretable.

Although $\ad$ is not a distance, because of the presence of $M_1(\nu^1) + M_1(\nu^2)$ in the expression, it becomes a distance if we freeze the value of $M_1(\nu)$. Although it may seem like an algebraic trick, in the next section we will see why it is natural to consider the space of Lévy intensities $\nu$ with a given value for $M_1(\nu)$.

\begin{proposition}
\label{th:distance}
For any $m>0$, $\ad$ defines a distance on the space 
\[
\M_{m}((0,+\infty) \times \mathbb{X}) = \left\{ \nu \in \M((0,+\infty) \times \mathbb{X})  \ : \ M_1(\nu) = m  \right\}.
\]
\end{proposition}

Eventually, for posterior distributions in our Bayesian models, we are led to consider mixtures of CRMs, that is, CRMs with a random Lévy intensity. We call them ``Cox CRM'', by analogy with Cox Poisson point processes \citep{cox1955some}. 

\begin{definition}
\label{def:cox_CRM}
A random measure $\crm$ on $\mathbb{X}$ is a Cox CRM or a doubly stochastic CRM if there exists a random element $\tilde \nu$ valued in $\M((0,+\infty) \times \mathbb{X})$ such that $\crm| \tilde \nu \sim \textup{CRM} (\tilde \nu)$.
\end{definition}

\noindent Note that any $\crm \sim \mathrm{CRM}(\nu)$ is technically a Cox CRM, by taking $\tilde{\nu}$ a.s. constant and equal to $\nu$. Moreover, we observe that $\tilde \nu$ is not a random measure according to the definition of Section~\ref{sec:crm} because it can have infinite mass almost surely. Thanks to the convexity of the Wasserstein distance, it is straightforward to extend the upper bound in Theorem~\ref{theo:upper_bound_WBL}. 

\begin{theorem}
\label{th:wass_over_ad}
Let $\crm^i$ for $i=1,2$ be Cox CRM with random Lévy intensities $\tilde{\nu}^1$ and $\tilde{\nu}^2$ respectively. If $(\tilde{\nu}^1, \tilde{\nu}^2)$ is any coupling between these random Lévy intensities then 
\[
\wbl(\mathcal{L}(\crm^1),\mathcal{L}(\crm^2)) \leq \E(\ad (\tilde{\nu}^1, \tilde{\nu}^2)).
\]
In particular, if we restrict our attention to $\tilde{\nu}^1, \tilde{\nu}^2 \in \M_m((0,+\infty) \times \mathbb{X})$ for some fixed m>0, with $\wad$ the Wasserstein distance over the $\ad$ metric in Definition~\ref{def:distance_levy}, we have
\begin{equation}
\label{def:wad}
\wbl(\mathcal{L}(\crm^1),\mathcal{L}(\crm^2)) \leq \wad(\mathcal{L}(\tilde{\nu}^1), \mathcal{L}(\tilde{\nu}^2)).
\end{equation}
\end{theorem}

\section{Normalization, identifiability, and scaling}
\label{sec:identifiability}

One of the most popular uses of completely random measures is through their normalization $t(\crm) = \crm/\crm(\mathbb{X})$, first defined in \cite{Regazzini2003}. The distribution of a sequence  $(X_n)_{n \ge 1}$ of exchangeable observations on $\mathbb{X}$ can be flexibly modelled as 
\begin{equation}
\label{eq:model}
X_1,\dots,X_n | \crm \simiid \frac{\crm}{\crm(\mathbb{X})}; \qquad \crm \sim \CRM(\nu),
\end{equation}
for any $\crm$ that takes values on the space of finite measures $\mathcal{M}_B(\mathbb{X})$ and such that $\crm(\mathbb{X}) > 0$ a.s.. In this case, \cite{Regazzini2003} show that $\crm/\crm(\mathbb{X})$ is a well-defined random probability measure, while \cite{James2006, James2009} find closed-form expressions for the predictive and posterior distribution.\\
Our goal for the remainder of this work is to use a distance between the laws of random measures $\crm$ to perform comparisons between the induced Bayesian models \eqref{eq:model}, both a priori and a posteriori. To this end, in this section we investigate which CRMs induce the same normalization in law, a.k.a., the identifiability of the law of a CRM. Theorem~\ref{th:identifiability} finds all \emph{equivalent} CRMs, in the sense that they induce the same normalization. The choice of a representative for each equivalence class leads us to the notion of scaled CRM. The analysis is then extended to Cox CRMs, which naturally emerge in the study of the posterior, as will be clear in Section~\ref{sec:merging}.

\begin{remark}For the normalization $\crm/\crm(\mathbb{X})$ to be well-defined one needs $\crm(\mathbb{X}) > 0$ a.s., which at the level of the Lévy intensities corresponds to $\nu((0,+\infty) \times \mathbb{X}) = + \infty$ \citep{Regazzini2003}. We will require a slightly stronger condition, namely that the CRM is \emph{infinitely active}. In terms of the canonical decomposition of Proposition~\ref{prop:canonical_nu}, this assumption reads 
\begin{equation*}
\rho_X(0,+\infty) = + \infty.
\end{equation*}
almost surely, where $X \sim P_0$.
This means that there will always be an infinite number of jumps in a set $A$ with $P_0(A) > 0$, and thus the compound Poisson approximation \eqref{eq:compound_poisson} cannot be extended to $\epsilon = 0$. In particular, it guarantees that for any set $A$ exactly one of the two alternatives holds: either $P_0(A) > 0$, and in this case $\crm(A) > 0$ a.s., or $P_0(A) = 0$, which implies $\crm(A) = 0$ a.s. This will be useful for our identifiability results.
\end{remark}

\subsection{Completely random measures}

Quite clearly, different CRMs $\crm$ can lead to the same normalized random measure $\crm/\crm(\mathbb{X})$: for example, $\crm$ and $\alpha \crm$ lead to the same normalization for any $\alpha > 0$. In Theorem~\ref{th:identifiability} below we build on a result on subordinators by David Aldous and Stevan Evans \cite[Lemma 7.5]{PitmanYor1992} to prove that this can be the only source for the lack of identifiability. 

\begin{theorem}
\label{th:identifiability}
Let $\crm^i \sim \textup{CRM}(\nu^i)$ be infinitely active with finite mean, for $i=1,2$. Assume that $\crm^i$ is non-degenerate, in the sense $P^i_0$ is not a Dirac mass, for $i=1,2$. Then,
\[
\frac{\crm^1}{\crm^1(\mathbb{X})} \eqd \frac{\crm^2}{\crm^2(\mathbb{X})} \qquad \text{   if and only if   } \qquad \crm^1 \eqd \alpha \crm^2 \text{ for some scalar } \alpha > 0.
\]
\end{theorem}

The assumption of non-degeneracy is only here to exclude the case $\crm^i = J_i \delta_x$ for $i=1,2$ with $x$ deterministic and $J_1,J_2$ random. In fact, in this case $\crm^1 /\crm^1(\mathbb{X}) = \crm^2 /\crm^2(\mathbb{X}) = \delta_x$ a.s., even though it does not hold $\crm^1 \eqd \alpha \crm^2$ for some $\alpha > 0$.
Theorem~\ref{th:identifiability} sheds new light on the equivalence relation $\sim$ that identifies CRMs leading to the same normalization, showing that $\crm^1 \sim \crm^2$ if and only if $\crm^1 \eqd \alpha \crm^2$ for some $\alpha >0$. We use it to choose one representative for each equivalence class, namely the only CRM in the class with total expected value equal to 1, as in the next definition.

\begin{definition}
\label{def:scaled}
A CRM $\crm$ is a scaled CRM if $\E(\crm(\mathbb{X})) = 1$. If $\crm$ is a CRM with finite mean, we define its scaled CRM as
\[
\crm \s = \frac{\crm}{\mathbb{E}(\crm(\mathbb{X}))}.
\]
\end{definition}

\begin{corollary}
\label{th:identifiability_cor}
Let $\crm^i \sim \textup{CRM}(\nu^i)$ be infinitely active with finite mean, for $i=1,2$. Then, 
\[
\frac{\crm^1}{\crm^1(\mathbb{X})} \eqd \frac{\crm^2}{\crm^2(\mathbb{X})} \qquad \text{if and only if} \qquad  \crm \s ^1 \eqd \crm \s^2.
\]
\end{corollary}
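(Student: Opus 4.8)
The plan is to deduce the corollary directly from Theorem~\ref{th:identifiability}, using only two elementary facts about the scaling and normalization operations, together with Definition~\ref{def:scaled}. \textbf{Fact (i):} for any $\alpha>0$, if $\crm\sim\textup{CRM}(\nu)$ then $\alpha\crm\sim\textup{CRM}(\nu_\alpha)$, where $\nu_\alpha$ is the pushforward of $\nu$ under $(s,x)\mapsto(\alpha s,x)$; hence $\alpha\crm$ is again infinitely active with finite mean and $\E((\alpha\crm)(\mathbb{X}))=\alpha\,\E(\crm(\mathbb{X}))$. In particular, taking $\alpha=1/\E(\crm(\mathbb{X}))$ shows that $\crm\s$ is itself an infinitely active scaled CRM, so the statement $\crm\s^1\eqd\crm\s^2$ is meaningful. \textbf{Fact (ii):} the normalization map $\mu\mapsto\mu/\mu(\mathbb{X})$ is scale-invariant: since infinite activity forces $\crm(\mathbb{X})>0$ a.s., for any $\beta>0$ one has $(\beta\crm)/(\beta\crm)(\mathbb{X})=\crm/\crm(\mathbb{X})$ pathwise a.s. Both facts are immediate from the representation~\eqref{eq:def_crm_from_levy} and Campbell's theorem.

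For the ``only if'' direction, assume $\crm^1/\crm^1(\mathbb{X})\eqd\crm^2/\crm^2(\mathbb{X})$. Theorem~\ref{th:identifiability} gives $\crm^1\eqd\alpha\,\crm^2$ for some $\alpha>0$. Passing to total masses and taking expectations (all finite and positive by hypothesis) yields $\E(\crm^1(\mathbb{X}))=\alpha\,\E(\crm^2(\mathbb{X}))$. Dividing the distributional identity by the positive scalar $\E(\crm^1(\mathbb{X}))$ and substituting,
\[
\crm\s^1=\frac{\crm^1}{\E(\crm^1(\mathbb{X}))}\eqd\frac{\alpha\,\crm^2}{\alpha\,\E(\crm^2(\mathbb{X}))}=\crm\s^2 .
\]
For the ``if'' direction no appeal to Theorem~\ref{th:identifiability} is needed: if $\crm\s^1\eqd\crm\s^2$, apply the measurable normalization map to both sides and invoke Fact (ii) with $\beta=\E(\crm^i(\mathbb{X}))$ to identify $\crm\s^i/\crm\s^i(\mathbb{X})=\crm^i/\crm^i(\mathbb{X})$, which gives $\crm^1/\crm^1(\mathbb{X})\eqd\crm^2/\crm^2(\mathbb{X})$.

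There is no genuine obstacle here; the corollary is essentially a restatement of Theorem~\ref{th:identifiability} in terms of the chosen representatives. The only points deserving a line of care are that $\crm^i(\mathbb{X})$ is a.s.\ strictly positive and finite—so that normalization and the scalar divisions are well defined almost surely, which is exactly what the infinite-activity and finite-mean hypotheses guarantee—and that equality in distribution is preserved under the measurable operations of scaling by a constant and of normalization.
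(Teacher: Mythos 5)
Your proof is correct, and it is exactly the argument the paper intends: the corollary is stated without proof as an immediate consequence of Theorem~\ref{th:identifiability}, obtained by observing that $\crm^1\eqd\alpha\crm^2$ forces $\E(\crm^1(\mathbb{X}))=\alpha\E(\crm^2(\mathbb{X}))$ so the scaled versions coincide in law, and conversely that normalization is invariant under scaling. Your two preliminary facts are justified by Lemma~\ref{th:crm_divided_constant} and the infinite-activity/finite-mean hypotheses, precisely as in the paper.
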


By Proposition~\ref{prop:canonical_nu} the mean measure $Q(\cdot) = \E(\crm(\cdot))$ of any scaled CRM with L\'evy intensity $\ddr \nu(s,x) =  \ddr \rho_x(s) \ddr P_0(x)$ is the probability measure $P_0$ and
\begin{equation}
\label{eq:normalization_rho}
\int_0^{+ \infty} s \, \ddr \rho_X(s) = 1.
\end{equation}
almost surely, where $X \sim P_0$. We highlight that, unlike normalized CRMs, scaled CRMs are CRMs. The following result provides a general way to find their L\'evy intensities. We state it here for diffuse jumps components $\ddr \rho_x(s) = \rho_x(s) \, \ddr s$ for simplicity, though it can be extended to atomic jump components; see Lemma SM3 in the Supplementary Material \citep{CatalanoLavenant2025proofs}.

\begin{lemma}
\label{th:levy_scaled}
Let $\crm\sim\textup{CRM}(\nu)$ such that $\ddr \nu(s,x) =  \rho_x(s) \, \ddr s \, \ddr P_0(x)$. Then $\crm \s$ has L\'evy intensity 
\[
\ddr \nu \s (s,x) = \mathbb{E}(\crm(\mathbb{X})) \rho_x ( \mathbb{E}(\crm(\mathbb{X})) s) \,  \ddr s \, \ddr P_0(x).
\]
\end{lemma}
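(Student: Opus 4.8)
The plan is to reduce the statement to the elementary fact that rescaling a CRM by a positive constant amounts to pushing forward its underlying Poisson random measure, and hence its L\'evy intensity, under a dilation of the jump coordinate. Write $c = \E(\crm(\mathbb{X}))$. By Campbell's theorem $c = \iint s \, \ddr \nu(s,x)$, which is finite by the finite-mean assumption and strictly positive because an infinitely active CRM has $\rho_x(0,+\infty)=+\infty$ for $P_0$-a.e.\ $x$, so $\nu$ is not the zero measure; thus $c \in (0,+\infty)$ and $\crm\s = c^{-1}\crm$ is well defined as a CRM.

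First I would observe that if $\crm = \sum_{(s,x)\in\mathcal{N}} s\,\delta_x$ for a Poisson random measure $\mathcal{N}$ on $(0,+\infty)\times\mathbb{X}$ with intensity $\nu$, then
\[
\crm\s = c^{-1}\crm = \sum_{(s,x)\in\mathcal{N}} (c^{-1}s)\,\delta_x = \sum_{(u,x)\in\mathcal{N}'} u\,\delta_x,
\]
where $\mathcal{N}' = \Phi_\#\mathcal{N}$ is the image of $\mathcal{N}$ under the map $\Phi\colon (0,+\infty)\times\mathbb{X}\to(0,+\infty)\times\mathbb{X}$, $\Phi(s,x) = (c^{-1}s,x)$. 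Since $\Phi$ is a homeomorphism of the state space onto itself, the mapping theorem for Poisson random measures gives that $\mathcal{N}'$ is again a Poisson random measure with intensity $\Phi_\#\nu$, and no mass escapes, so $\Phi_\#\nu$ is the \emph{full} L\'evy intensity of $\crm\s$, provided it meets the integrability requirement in the definition of a CRM; this holds because $\iint \min(1,u)\,\ddr(\Phi_\#\nu)(u,x) = \iint \min(1,c^{-1}s)\,\ddr\nu(s,x) \le \max(1,c^{-1})\iint\min(1,s)\,\ddr\nu(s,x) < +\infty$.

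It then remains to compute $\Phi_\#\nu$ explicitly. For any nonnegative measurable $f$, using Fubini and the change of variables $s = cu$ in the inner (one-dimensional) integral, which is licit for $P_0$-a.e.\ fixed $x$,
\[
\iint f(u,x)\,\ddr(\Phi_\#\nu)(u,x) = \iint f(c^{-1}s,x)\,\rho_x(s)\,\ddr s\,\ddr P_0(x) = \iint f(u,x)\, c\,\rho_x(cu)\,\ddr u\,\ddr P_0(x),
\]
so that $\ddr\nu\s(s,x) = c\,\rho_x(cs)\,\ddr s\,\ddr P_0(x)$, which is the asserted formula with $c = \E(\crm(\mathbb{X}))$. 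As a consistency check one recovers $\int_0^{+\infty} s\, c\,\rho_x(cs)\,\ddr s = c^{-1}\int_0^{+\infty} s\,\rho_x(s)\,\ddr s = 1$ for $P_0$-a.e.\ $x$, in agreement with~\eqref{eq:normalization_rho}, while $P_0$ is left unchanged, as it must be since $\crm$ and $\crm\s$ have proportional mean measures.

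I do not expect a serious obstacle here: the only points requiring care are checking that $c\in(0,+\infty)$ so that the division is legitimate, and invoking the mapping theorem for the dilation $\Phi$ while noting that, being a homeomorphism of $(0,+\infty)\times\mathbb{X}$, it produces a genuine L\'evy intensity rather than one with mass accumulating at the boundary $s=0$. The extension to atomic jump components announced after the statement is handled in exactly the same way, replacing the density manipulation by the pushforward of the kernel $s\mapsto c^{-1}s$; this is carried out in Lemma~\ref{th:crm_divided_constant}.
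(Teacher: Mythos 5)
Your proof is correct and follows essentially the same route as the paper, which reduces the statement to the auxiliary scaling result (Lemma~\ref{th:crm_divided_constant} with $k=\E(\crm(\mathbb{X}))$): both arguments identify $\crm\s$ as the CRM whose L\'evy intensity is the pushforward of $\nu$ under the dilation $(s,x)\mapsto(s/c,x)$ and then perform the change of variables $s=cu$. The only cosmetic difference is that you justify the pushforward identity via the mapping theorem for Poisson random measures, whereas the paper verifies it by computing the Laplace functional of $\crm/k$; your additional checks that $c\in(0,+\infty)$ and that the integrability condition is preserved are welcome but not essential.
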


\begin{example}[Gamma CRM]
\label{ex:gammaCRM}
A gamma CRM with base probability $P_0 \in \mathcal{P}(\mathbb{X})$, total base measure $\alpha>0$ and scale parameter $b>0$ has L\'evy intensity $\ddr \nu(s, x) = \alpha e^{-bs}s^{-1} \ddr s \,\ddr P_0(x)$.
Thanks to Campbell's theorem one easily shows that $\mathbb{E}(\crm(\mathbb{X})) = \alpha/b$, so that by Lemma~\ref{th:levy_scaled} the corresponding scaled gamma CRM has L\'evy intensity 
\begin{equation}
\label{eq:scaledgamma}
\ddr \nu \s (s, x) = \alpha \frac{e^{- \alpha s}}{s} \ddr s \,\ddr P_0(x).
\end{equation}
Thus, $\crm \s$ is a gamma CRM with base probability $P_0$, total base measure $\alpha$ and scale parameter $\alpha$. This highlights the redundancy of the parameter $b>0$, which will be assumed to be equal to 1 in the remainder of the paper, unless otherwise specified. 
\end{example}

\begin{example}[Generalized gamma CRM]
\label{ex:gengamma}
Another popular prior in BNP models is the normalized generalized gamma \citep{Lijoi2007, Barrios2013}. The generalized gamma CRM with total base measure $\alpha>0$, discount parameter $\sigma \in (0,1)$, and base probability $P_0 \in \mathcal{P}(\mathbb{X})$ has L\'evy intensity 
\[
\ddr \nu(s,x) = \frac{\alpha}{\Gamma(1-\sigma)} \frac{e^{-s}}{s^{1+\sigma}} \ddr s \, \ddr P_0(x),
\]
where $\Gamma(z) = \int_0^{+ \infty} x^{z-1} e^{-x} \, \ddr x$ is the gamma function. In particular, the gamma CRM is recovered when $\sigma = 0$ and the inverse Gaussian CRM \citep{Lijoi2005} when $\sigma =1/2$. It is easy to prove  that $\mathbb{E}(\crm(\mathbb{X})) = \alpha$, so that by Lemma~\ref{th:levy_scaled} the corresponding scaled CRM has L\'evy intensity
\begin{equation}
\label{eq:gengamma}
\ddr \nu \s(s,x) = \frac{ \alpha^{1-\sigma}}{\Gamma(1-\sigma)} \frac{e^{-\alpha s}}{s^{1+\sigma}} \ddr s \, \ddr P_0(x).
\end{equation}
Thus, we can regard $\crm \s$ as a generalized gamma CRM that also accounts for a scale parameter.
\end{example}

When comparing normalized CRMs we should thus restrict our attention to their scaled version. Importantly, Proposition~\ref{th:distance} and Theorem~\ref{theo:upper_bound_WBL} guarantee that 
\begin{equation}
\label{eq:adapted_distance}
\ad(\nu^1 \s,\nu^2 \s) =  \inf_{\pi \in \Pi(P_0^1, P_0^2)} \; \E_{(X,Y) \sim \pi} \left( \dXt (X,Y) +  \W_{*}(\rho_X^1,\rho_Y^2) \right)
\end{equation} 
is a distance on Lévy intensities of scaled CRMs that satisfies $\wbl(\mathcal{L}(\crm^1 \s),\mathcal{L}(\crm^2 \s)) \leq \ad(\nu^1 \s,\nu^2 \s)$. Since Lévy intensities uniquely characterize the law of 
CRMs, the adapted extended Wasserstein distance 
$\ad$ can be considered as a distance 
on CRMs: arguably a less canonical but more tractable one.

\subsection{Cox completely random measures}

We now extend the notion of scaling and identifiability to Cox completely random measures.

\begin{definition} 
\label{def:cox_scaled}
Let $\crm$ be a Cox CRM such that $\crm| \tilde \nu \sim \textup{CRM} (\tilde \nu)$ has a.s. finite mean and infinite activity. Then we define its Cox scaled CRM $\crm \s$ as
\[
\crm \s | \tilde \nu = \frac{\crm}{\E(\crm(\mathbb{X})| \tilde \nu)}.
\]
\end{definition}

\begin{theorem}
\label{th:identifiability_cox}
Let $\crm^i$ be a Cox CRM such that $\crm^i| \tilde \nu^i \sim \textup{CRM} (\tilde \nu^i)$ is a.s. an infinitely active CRM with finite mean, for $i=1,2$. Denoting $\ddr \tilde{\nu}^i(s,x) = \ddr \tilde{\rho}^i(s) \, \ddr \tilde{P}^i_0(x)$, we 
assume that i) $\E(\tilde{P}^i_0)$ is not purely atomic, (ii) for any Borel set $A$, either $\tilde{P}^i_0(A) > 0$ a.s. or $\tilde{P}^i_0(A) = 0$ a.s.. Then,
\[
\frac{\crm^1}{\crm^1(\mathbb{X})} \eqd \frac{\crm^2}{\crm^2(\mathbb{X})} \qquad \text{if and only if} \qquad  \crm \s ^1 \eqd \crm \s^2.
\]
\end{theorem}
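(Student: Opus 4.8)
## Proof Proposal

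The plan is to reduce Theorem~\ref{th:identifiability_cox} to the CRM case (Corollary~\ref{th:identifiability_cor}) applied \emph{conditionally} on the random Lévy intensities, and then use the fact that a Cox CRM is uniquely determined by the law of its random intensity. The ``if'' direction is immediate: if $\crm\s^1 \eqd \crm\s^2$, then since normalization of a CRM depends only on its scaled version (this holds conditionally on $\tilde\nu^i$, hence unconditionally by integrating over the law of $\tilde\nu^i$), we get $\crm^1/\crm^1(\mathbb{X}) \eqd \crm^2/\crm^2(\mathbb{X})$. The substance is in the ``only if'' direction.

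For the ``only if'' direction, I would first record a conditional version of Theorem~\ref{th:identifiability}: given a realization of $\tilde\nu^i$ (which is a.s. an infinitely active Lévy intensity with finite mean), $\crm^i \mid \tilde\nu^i$ is a CRM, and its normalization $\crm^i/\crm^i(\mathbb{X}) \mid \tilde\nu^i$ depends on $\tilde\nu^i$ only through the scaled intensity $\tilde\nu^{i}\s$ obtained by rescaling the jump part by the conditional mean $\E(\crm^i(\mathbb{X})\mid\tilde\nu^i)$. The key step is then to show that the \emph{law} of the normalized Cox CRM determines the \emph{law} of the scaled random intensity $\tilde\nu^{i}\s$. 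Here is where the hypothesis that $\tilde P_0^i$ is a.s.\ equivalent to a fixed deterministic, not purely atomic measure $Q^i$ enters: it guarantees (via the infinite activity, as in the discussion after the definition of infinite activity in Section~\ref{sec:identifiability}) that the support of the atoms of $\crm^i$, i.e.\ the random set $\{x : \crm^i(\{x\}) > 0\}$ or more precisely the collection of null sets of $\tilde P_0^i$, is a.s.\ the same deterministic object, so that $Q^1$ and $Q^2$ must be equivalent as soon as the normalizations agree in distribution — this rules out the degenerate situation where different priors place atoms on disjoint supports yet produce the same normalized law. Concretely, from $\crm^1/\crm^1(\mathbb{X}) \eqd \crm^2/\crm^2(\mathbb{X})$ one reads off that the random measures $\tilde P_0^1$ and $\tilde P_0^2$ (which are recoverable as conditional expectations of the normalized measure given the intensity, by the analogue of Proposition~\ref{th:canonical}) have the same law, and similarly the conditional jump laws $\tilde\rho^1_X$ and $\tilde\rho^2_Y$ agree after the appropriate coupling.

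The cleanest route to ``determines the law of the scaled intensity'' is: the normalized Cox CRM is itself a doubly stochastic (Cox-type) object directed by $\tilde\nu^{i}\s$; by \cite[Theorem 3.3]{Kallenberg2017} the law of a Cox process determines the law of its directing intensity, and one checks that the normalized CRM retains enough information — e.g.\ its Laplace functional, conditionally on the intensity, is an injective function of $\tilde\nu^{i}\s$ by the conditional version of Theorem~\ref{th:identifiability} and its corollary, so matching the unconditional Laplace functionals (integrated against the law of $\tilde\nu^{i}\s$) forces equality of those laws. Once $\law(\tilde\nu^{1}\s) = \law(\tilde\nu^{2}\s)$, we have $\crm\s^1 \eqd \crm\s^2$ by definition of the Cox scaled CRM together with uniqueness of the Cox construction given the intensity.

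The main obstacle I anticipate is the measurability/identifiability bookkeeping in the step that recovers the law of $\tilde\nu^{i}\s$ from the law of the normalized Cox CRM: one must be careful that the ``deterministic representative'' $Q^i$ is genuinely forced to be common to both models (this is exactly what the equivalence hypothesis on $\tilde P_0^i$ is designed to supply, paralleling the nonatomicity assumption used elsewhere), and that the conditional-on-intensity application of Theorem~\ref{th:identifiability} can be stitched together measurably in the randomness of $\tilde\nu^i$. Handling the scalar ambiguity $\alpha>0$ from Theorem~\ref{th:identifiability} is routine since it is absorbed exactly by the normalization $\E(\crm^i(\mathbb{X})\mid\tilde\nu^i)$ defining the Cox scaled CRM, but it must be tracked as a measurable function of $\tilde\nu^i$ rather than a fixed constant.
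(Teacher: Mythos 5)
There is a genuine gap at the heart of your ``only if'' direction. You propose to recover $\law(\tilde\nu^{1}\s)=\law(\tilde\nu^{2}\s)$ by arguing that the normalized measure is ``a doubly stochastic (Cox-type) object directed by $\tilde\nu^{i}\s$'' and invoking \cite[Theorem 3.3]{Kallenberg2017}, combined with the observation that the conditional law of $\crm^i/\crm^i(\mathbb{X})$ given the intensity is an injective function of $\tilde\nu^{i}\s$. Neither half of this works as stated. First, $\crm^i/\crm^i(\mathbb{X})$ is \emph{not} a Cox CRM: normalization by the random total mass destroys the independence of increments even conditionally on $\tilde\nu^i$, so Kallenberg's uniqueness theorem for Cox processes cannot be applied to the normalized object. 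Second, and more fundamentally, injectivity of the map $\tilde\nu\s \mapsto \law(\crm/\crm(\mathbb{X})\mid\tilde\nu)$ (which is what the conditional version of Theorem~\ref{th:identifiability} gives you) does \emph{not} imply that the unconditional (mixture) law determines the mixing law: identifiability of a mixture model is strictly stronger than injectivity of the component map, and your hypothesis only concerns the mixture laws. This is precisely the difficulty the theorem has to overcome, and your proposal asserts it away.

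The paper's proof circumvents this by manufacturing auxiliary objects that \emph{are} genuine Cox CRMs (or completely random vectors conditionally on the intensity), to which Kallenberg's theorem legitimately applies: first the ratio vector $\big(\crm^i(\cdot)/\crm^i(A_k)\big)_{k}$ restricted to $A^c$ for sets $A_k$ with $Q(A_k)>0$, whose law is determined by that of $\crm^i/\crm^i(\mathbb{X})$, and then the rescaled measure $\crm^i(\cdot)/\big(\tilde q^i\,\tilde P_0^i(B)\big)$ on $B^c$. Integrating the resulting equalities of random Lévy intensities against $s$ extracts $\tilde P_0^1(B)\eqd\tilde P_0^2(B)$ and ultimately the equality in law of the scaled random intensities on $(0,+\infty)\times B^c$. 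This also explains the role of the ``not purely atomic'' hypothesis on $Q$, which your proposal never uses: the construction only yields the conclusion on complements $B^c$ of sets with $Q(B)>0$, and one needs a Sierpinski-type decreasing sequence $B_n$ with $Q(B_n)>0$ and $Q(\cap_n B_n)=0$ to pass to the whole space. A correct proof must either reproduce some version of this auxiliary-Cox-CRM construction or supply a genuinely different argument for mixture identifiability; your current outline does not.
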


Following Proposition~\ref{prop:canonical_nu}, we see that if $\crm | \tilde{\nu} \sim \mathrm{CRM}(\tilde{\nu})$, then it is a Cox scaled CRM if $\tilde{\nu} \in \M_{1}((0,+\infty) \times \mathbb{X})$ a.s. We denote by $\tilde \nu \s$ the random L\'evy measure of the Cox scaled CRM, also referred to as random scaled L\'evy measure.\\

\begin{remark}
Theorem~\ref{th:identifiability_cox} is the analog for Cox CRMs of Corollary~\ref{th:identifiability_cor} for CRMs. However, it is worth noting that the latter does not follow directly from Theorem~\ref{th:identifiability_cox}. Indeed, when dealing with CRMs we do not need the extra assumption that $P_0$ is not purely atomic.
\end{remark}

By reasoning as for CRMs, thanks to Theorem~\ref{th:identifiability_cox} when comparing normalized Cox CRMs we should restrict our attention to their scaled version. Theorem~\ref{th:wass_over_ad} ensures that $\wad(\mathcal{L}(\tilde \nu^1 \s), \mathcal{L}(\tilde \nu^2 \s))$ is a distance on random scaled L\'evy measures satisfying $\wbl(\mathcal{L}(\crm^1 \s),\mathcal{L}(\crm^2 \s)) \leq \wad(\mathcal{L}(\tilde \nu^1 \s), \mathcal{L}(\tilde \nu^2 \s))$.

\section{Merging rate of Bayesian nonparametric models}
\label{sec:merging}

In this section we use our distance to measure the impact of the prior for Bayesian nonparametric models of the form \eqref{eq:model}. Specifically, we consider the distance between the posteriors corresponding to two different priors and (i) we determine if the distance vanishes as the number of observations increases, (ii) if it vanishes, we analyze its rate of convergence to 0. This is what we call the merging rate of opinions. In Section~\ref{sec:dp} we use this general framework to study the impact of the parameters of the Dirichlet processes, whereas in Section~\ref{sec:gengamma} we investigate the impact of the discount parameter $\sigma$ of the generalized gamma CRM, showing the exceptional differences between these two scenarios. \\

The remarkable Theorem 1 in \citet{James2009} provides a regular version of the posterior $\mathcal{L}(\crm | X_{1:n})$ arising from model \eqref{eq:model}, that is, a Markov kernel $\phi:\mathbb{X}^n \times \mathcal{B}(\mathcal{M}) \to [0,1]$, where $\mathcal{B}(\mathcal{M})$ denotes the Borel $\sigma$-algebra on the space of locally finite measures $\M(\mathbb{X})$. For observed data $x_{1:n}$, we denote by $\mathcal{L}(\crm | X_{1:n} = x_{1:n})$ the probability measure on $\M(\mathbb{X})$ defined by $A \mapsto \phi(x_{1:n},A)$. A crucial aspect of our setting is that $x_{1:n}$ is a generic deterministic sequence. It could be a realization from the model, a realization from another model, or indeed any deterministic sequence of data: we are able to analyze the asymptotic behavior of the merging without putting any distributional assumptions on $x_{1:n}$.
We will explain how our results depend only on the number of observations $n$ and the number $k = k_n$ of distinct values among the $n$ first observations.

We denote by $\crm^* \sim \mathcal{L}(\crm | X_{1:n} = x_{1:n})$ a random measure that we often address as the posterior random measure, omitting the dependence on $n$ for notational convenience.
The aforementioned Theorem 1 in \citet{James2009}, which we state below for completeness, shows that $\crm^*$ is a CRM conditionally on a latent variable. Using the terminology of Section~\ref{sec:crm}, this implies that $\crm^*$ is a Cox CRM. To preserve the identifiability discussed in Section~\ref{sec:identifiability}, we do not define the distance at the level of the posterior Cox CRM $\crm^*$ but rather on its scaled version $\crm^{*} \s$ (cf. Definition~\ref{def:cox_scaled}). We focus on the distance $\wad$ between their random L\'evy measures, where $\wad$ is the Wasserstein distance over the $\ad$ metric in Definition~\ref{def:distance_levy}, which is tailored for Cox scaled CRMs and whose explicit expressions will allow us to derive exact asymptotic behaviors. As described in \eqref{def:wad}, these can be used to derive upper bounds of $\wbl$, the Wasserstein distance over the $\bl$ metric in \eqref{def:bl}, which is a natural distance for arbitrary random measures without additional structure. \\

In the rest of the section the base probability $P_0$ is assumed to be non-atomic, $(x_1^*,\dots, x_k^*)$ are the $k \le n$ distinct observations in $(x_1,\dots,x_n)$ and $n_i = \#\{j: x_j = x_i^*\}$ is the number of observations equal to $x_i^*$. We define a random variable $U$ with the following p.d.f.
\begin{equation}
\label{eq:pdf_u}
f_U(u) \propto u^{n-1} e^{- \psi(u)} \prod_{i=1}^k \tau_{n_{i}|x_i^*}(u),
\end{equation}
where $\psi$ is the Laplace exponent of $\crm$ and $\tau_{m|x}$ are its cumulants, respectively equal to 
\[
\psi(u) = \iint_{(0,+\infty) \times \mathbb{X}} (1-e^{-u s}) \, \ddr \nu(s,x), \qquad \tau_{m|x}(u) = \int_{(0,+\infty)} e^{-u s} s^{m} \ddr \rho_x(s).
\]

\begin{theorem}[\citet{James2009}]
\label{th:lijoi_pruenster} 
Let $\crm$ be a CRM with L\'evy intensity $\ddr \nu(s,x) = \ddr \rho_x(s) \ddr P_0(x)$. Consider $U=U_n$ a latent variable  with p.d.f. \eqref{eq:pdf_u} and define $\crm^*$ such that
$$
\tilde{\mu}^* |  U \eqd \tilde{\mu}_{U}+\sum_{i=1}^k J_i^{U} \delta_{x_i^*},
$$
where (i) $\tilde{\mu}_{U}$ is a CRM with Lévy intensity $\ddr \nu_{U}(s,x)=\mathrm{e}^{-U s} \ddr \rho_x(s) \ddr P_0(x)$, (ii) the i-th jump $J_i^{U}$ has p.d.f. $f_i(s) \propto$ $s^{n_i} \mathrm{e}^{-U s} \ddr \rho_{x_i^*}(s)$, (iii) $\{J_i^{U}\}_i$ are mutually independent and independent from $\tilde{\mu}_{U}$. \\
Then $\mathcal{L}(\crm^*) = \mathcal{L}(\crm |X_{1:n} = x_{1:n})$ is a version of the posterior distribution according to model \eqref{eq:model}.
\end{theorem}

\subsection{Dirichlet process}
\label{sec:dp}

In this section we study the impact of the parameters of the Dirichlet process, which is a normalized gamma CRM, by studying the OT distance between scaled posteriors. We first show that the posterior is a Cox CRM and find its corresponding scaled version (Lemma~\ref{th:gamma_cox}). Then, we prove several finite-sample and asymptotic properties for the OT distance (Proposition~\ref{th:wass_dp_posteriori}). This allows us to state new conclusions about the merging rate of opinions (Corollary~\ref{th:dp_merging}) and study its finite-sample behavior.

First of all we show that the posterior is a Cox CRM. This result builds on Theorem~\ref{th:lijoi_pruenster}, by further showing that the jumps $J_i^{U}$ are infinitely divisible, and can thus be included in the random L\'evy intensity, as shown in Lemma SM4 in the Supplementary Material \citep{CatalanoLavenant2025proofs}.

\begin{lemma}
\label{th:gamma_cox}
Let $\crm$ be a gamma CRM of parameters $(\alpha,P_0)$. Then $\crm^*$ is a Cox CRM and $\crm^*\s$ is a scaled gamma CRM with canonical L\'evy intensity $ \ddr \nu^* \s (s,x)  = \rho^* (s) \ddr s \, \ddr P_0^*(x)$, where
\begin{align*}
&\rho^*(s) =  (\alpha+n) \frac{e^{-(\alpha+n)s}}{s}; \qquad P_0^* = \frac{\alpha}{\alpha+n}  P_0 + \frac{1}{\alpha+n}\sum_{i=1}^n \delta_{x_i}.
\end{align*}
\end{lemma}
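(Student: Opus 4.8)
The plan is to start from the James posterior characterization (Theorem~\ref{th:lijoi_pruenster}) and show that, in the gamma case, the posterior random measure $\crm^*$ is already a Cox CRM, then compute its scaled version via Lemma~\ref{th:levy_scaled} (in its version allowing atomic jump components, i.e.\ Lemma~\ref{th:crm_divided_constant}). For a gamma CRM of parameters $(\alpha, P_0)$ we have $\ddr\rho_x(s) = e^{-s} s^{-1} \ddr s$, independently of $x$. Theorem~\ref{th:lijoi_pruenster} then gives a latent variable $U$ such that $\crm^*|U \eqd \crm_U + \sum_{i=1}^k J_i^U \delta_{X_i^*}$, where $\crm_U \sim \CRM$ with intensity $e^{-(U+1)s} s^{-1} \ddr s \, \ddr P_0(x)$ and $J_i^U$ has density $f_i(s) \propto s^{n_i} e^{-(U+1)s} s^{-1} \ddr s = s^{n_i - 1} e^{-(U+1)s} \ddr s$, i.e.\ $J_i^U \sim \textup{Gamma}(n_i, U+1)$. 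The key observation is that a Gamma$(n_i, U+1)$ law is infinitely divisible, being the law of $\crm'(\mathbb{X})$ for a gamma CRM $\crm'$ with Lévy measure $n_i e^{-(U+1)s} s^{-1} \ddr s$ concentrated on the single atom $X_i^*$ (this is Lemma~\ref{th:id_jumps}). Hence, conditionally on $U$, the fixed-atom part can be absorbed into a CRM: $\crm^*|U$ is a CRM with (random, through $U$) Lévy intensity
\[
\ddr\tilde\nu_U(s,x) = e^{-(U+1)s}\, \frac{\ddr s}{s}\, \ddr\Big(\alpha P_0 + \sum_{i=1}^k n_i \delta_{X_i^*}\Big)(x).
\]
This exhibits $\crm^*$ as a Cox CRM with random Lévy intensity $\tilde\nu_U$ depending only on the latent $U$.

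Next I would compute the scaled version. Conditionally on $U$, $\crm^*|U$ has mean measure $\E(\crm^*(\cdot)|U) = (U+1)^{-1}\big(\alpha P_0(\cdot) + \sum_{i=1}^k n_i \delta_{X_i^*}(\cdot)\big)$ by Campbell's theorem (the first moment of $e^{-(U+1)s}s^{-1}\ddr s$ is $(U+1)^{-1}$), so $\E(\crm^*(\mathbb{X})|U) = (\alpha+n)/(U+1)$, noting $\sum_i n_i = n$. Crucially this normalizing constant, once we divide, makes the $U$-dependence disappear: by Lemma~\ref{th:crm_divided_constant} the conditional scaled intensity is obtained by the rescaling $s \mapsto cs$ with $c = (\alpha+n)/(U+1)$, giving
\[
\ddr\nu^*(s,x) = c\, \frac{e^{-(U+1)cs}}{cs}\, c\, \ddr s\ \ddr\Big(\tfrac{\alpha}{\alpha+n}P_0 + \tfrac{1}{\alpha+n}\textstyle\sum_i n_i \delta_{X_i^*}\Big)(x),
\]
and since $(U+1)c = \alpha+n$ this simplifies to $\rho^*(s)\,\ddr s\,\ddr P_0^*(x)$ with $\rho^*(s) = (\alpha+n) e^{-(\alpha+n)s}/s$ and $P_0^* = \frac{\alpha}{\alpha+n}P_0 + \frac{1}{\alpha+n}\sum_{i=1}^k n_i \delta_{X_i^*}$. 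Finally, rewriting the atomic part: since each $X_i^*$ appears $n_i$ times among the $X_j$ and the $X_i^*$ are the distinct values, $\sum_{i=1}^k n_i \delta_{X_i^*} = \sum_{j=1}^n \delta_{X_j}$, which is exactly the claimed form of $P_0^*$. Because $\nu^*$ is deterministic (no residual dependence on $U$), $\crm^*\s$ is in fact an ordinary scaled gamma CRM, and one checks $\int_0^\infty s\,\rho^*(s)\,\ddr s = 1$ so it is correctly normalized.

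The main obstacle is the infinite divisibility / absorption step, i.e.\ justifying that the finitely many fixed-atom jumps $J_i^U$ can legitimately be incorporated into a Lévy intensity to realize $\crm^*|U$ as a genuine CRM (in our slightly restrictive sense, whose law is given by~\eqref{eq:def_crm_from_levy}); this is where Lemma~\ref{th:id_jumps} does the work, identifying the Gamma$(n_i, U+1)$ jump as the total mass of a CRM supported at $X_i^*$ with Lévy measure $n_i e^{-(U+1)s} s^{-1}\ddr s$, and one must check that adding these atomic pieces to the diffuse intensity $e^{-(U+1)s}s^{-1}\ddr s\,\ddr P_0$ yields a valid Lévy intensity (integrability of $\min(1,s)$ and of $s$ are clear) whose CRM has the right law by independence of the three components in Theorem~\ref{th:lijoi_pruenster}. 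The rest is the bookkeeping with Campbell's theorem and the scaling lemma, which is routine.
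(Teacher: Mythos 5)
Your proposal is correct and follows essentially the same route as the paper: specialize Theorem~\ref{th:lijoi_pruenster} to the gamma case, identify the fixed-location jumps as Gamma$(n_i,U+1)$ and absorb them into the L\'evy intensity via Lemma~\ref{th:id_jumps}, then observe that scaling by $\E(\crm^*(\mathbb{X})\,|\,U)=(\alpha+n)/(U+1)$ cancels the $U$-dependence. The only blemish is a bookkeeping slip in your displayed scaled intensity, where the canonical pre-scaling jump density is $(\alpha+n)e^{-(U+1)s}/s$ so the rescaled density should read $c\,(\alpha+n)\,e^{-(U+1)cs}/(cs)=(\alpha+n)e^{-(\alpha+n)s}/s$ rather than carrying a factor $c^2$; your stated conclusion is nonetheless the correct one.
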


Lemma~\ref{th:gamma_cox} shows that the posterior of a gamma CRM is a Cox CRM with a peculiar feature: all the randomness of its L\'evy intensity is contained in a scale parameter, so that the corresponding Cox scaled CRM reduces to a scaled CRM. In the sequel we write $\Gamma(z,t) = \int_t^{+ \infty} x^{z-1} e^{-x} \, \ddr x$ for the incomplete Gamma function.

\begin{proposition}
\label{th:dp_posteriori}
Let $\crm^i$ be a gamma CRM of parameters $(\alpha_i,P_0^i)$
for $i=1,2$. Then it holds that $\wad(\mathcal{L}(\tilde \nu^1 \s),\mathcal{L}(\tilde \nu^2 \s)) = \ad (\nu^{1*} \s, \nu^{2*} \s) = \mathcal{J} + \mathcal{A}$,  with $\mathcal{J} = \mathcal{J}(\alpha_1, \alpha_2, n)$ and $\mathcal{A} = \mathcal{A}(\alpha_1, P_0^1, \alpha_2, P_0^2, n) $ defined as 
\begin{align*}
\mathcal{J} &= \int_0^{+\infty} | (\alpha_1 +n) \Gamma(0, (\alpha_1 +n) s) - (\alpha_2 +n) \Gamma(0,(\alpha_2 + n) s) | \ddr s, \\
\mathcal{A} &= \Wt \bigg(\frac{\alpha_1}{\alpha_1 + n} P_{0}^1 +\frac{1}{\alpha_1 + n} \sum_{i=1}^n \delta_{x_i} , \frac{\alpha_2}{\alpha_2 + n} P_{0}^2 +\frac{1}{\alpha_2 + n} \sum_{i=1}^n \delta_{x_i}\bigg).
\end{align*}
\end{proposition}

We observe that $\mathcal{J}$ measures the discrepancy between the jumps of the posterior random measures, which does not depend on the base probability measures and depends on the observations only through their cardinality $n$. As for $\mathcal{A}$, it measures the discrepancy between the atoms a posteriori and it coincides with the truncated Wasserstein distance between the predictive distributions  $\mathcal{L}(X_{n+1} | X_{1:n})$ of the corresponding models \eqref{eq:model}.

\begin{proposition}
\label{th:wass_dp_posteriori}
Consider the framework of Proposition~\ref{th:dp_posteriori} where, without loss of generality, $\alpha_1 \le \alpha_2$, and let $c = \int_0^{+ \infty} |\Gamma(0,t) - e^{-t}| \ddr t$. Then, there holds
\begin{enumerate}
\item $n \mapsto \mathcal{J}(\alpha_1, \alpha_2, n)$ is decreasing in $n$. 
\item For every $n \ge 0$,
\begin{equation}
\label{eq:dp_posterior_bound}
\mathcal{J}(\alpha_1, \alpha_2, n) \leq c \log \bigg( \frac{\alpha_2 + n}{\alpha_1+n} \bigg).
\end{equation}
\item As $n$ goes to $+\infty$,
\[ \mathcal{J}(\alpha_1, \alpha_2, n)  = \frac{c (\alpha_2 - \alpha_1)}{n} + o \bigg( \frac{1}{n} \bigg). \]
\item For every $n \ge 1$, the following inequality holds, with equality if $P_0^1 = P_0^2$ or $\alpha_1 = \alpha_2$,
\[
\mathcal{A} \le \frac{\alpha_1}{\alpha_1 + n} \Wt (P_0^1, P_0^2) + \frac{(\alpha_2 - \alpha_1) n}{(\alpha_1 + n)(\alpha_2 + n)}  \Wt \bigg( \frac{1}{n} \sum_{i=1}^n \delta_{x_i}, P_0^2 \bigg).
\]
\end{enumerate}
\end{proposition}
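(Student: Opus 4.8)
The plan is to deduce parts 1--3 from the \emph{a priori} analysis of Proposition~\ref{th:dp_prior} via the substitution $\alpha_i \mapsto \alpha_i + n$, and to handle part 4 by Kantorovich--Rubinstein duality applied to the difference of the two predictive measures.

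For parts 1--3, I would first note that, comparing the integrand defining $\mathcal{J}(\alpha_1,\alpha_2,n)$ in Corollary~\ref{th:dp_posteriori} with the \emph{a priori} quantity $\mathcal{J}(\alpha_1,\alpha_2)$ of the example preceding Proposition~\ref{th:dp_prior}, one has the exact identity $\mathcal{J}(\alpha_1,\alpha_2,n) = \mathcal{J}(\alpha_1+n,\alpha_2+n)$. Moreover, the change of variables $t = (\alpha_1+n)s$ in the integral shows that $\mathcal{J}(a,b)$ depends on $(a,b)$ only through the ratio $b/a$; writing $\mathcal{J}(a,b)=g(b/a)$ for $b\ge a$, part 1 of Proposition~\ref{th:dp_prior} (taken with $\alpha_1=1$) says precisely that $g$ is increasing on $[1,+\infty)$, while $n\mapsto(\alpha_2+n)/(\alpha_1+n)=1+(\alpha_2-\alpha_1)/(\alpha_1+n)$ is non-increasing; composing the two gives part 1. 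Part 2 is then immediate: apply the universal bound \eqref{eq:dp_prior_bound} to the pair $\alpha_1+n\le\alpha_2+n$. For part 3, apply the order-two expansion in part 3 of Proposition~\ref{th:dp_prior} to the same pair: with $\varepsilon_n := (\alpha_2+n)/(\alpha_1+n)-1 = (\alpha_2-\alpha_1)/(\alpha_1+n)\to0$ this reads $\mathcal{J}(\alpha_1,\alpha_2,n) = C\varepsilon_n - \tfrac{C}{2}\varepsilon_n^2 + o(\varepsilon_n^2)$; since $\varepsilon_n = (\alpha_2-\alpha_1)/n + O(1/n^2)$ and $\varepsilon_n^2 = O(1/n^2)$, collecting terms yields $\mathcal{J}(\alpha_1,\alpha_2,n) = C(\alpha_2-\alpha_1)/n + O(1/n^2)$.

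For part 4, I would set $\Pi_i := \tfrac{\alpha_i}{\alpha_i+n}P_0^i + \tfrac{1}{\alpha_i+n}\sum_{j=1}^n\delta_{X_j}$ for $i=1,2$, so that $\mathcal{A}=\W_1(\Pi_1,\Pi_2)$, and write $E_n := \tfrac1n\sum_{j=1}^n\delta_{X_j}$ for the empirical measure (this is what $\tfrac1n\sum_i X_i$ denotes in the statement). An elementary rearrangement, in which the coefficients of $P_0^2$ and of $E_n$ created by completing the first difference combine into $\mp\theta$, gives the decomposition of the signed measure
\[
\Pi_1-\Pi_2 \;=\; \frac{\alpha_1}{\alpha_1+n}\,(P_0^1-P_0^2) \;+\; \theta\,(E_n-P_0^2), \qquad \theta := \frac{n(\alpha_2-\alpha_1)}{(\alpha_1+n)(\alpha_2+n)} \ge 0 .
\]
Since $P_0^1-P_0^2$ and $E_n-P_0^2$ are differences of probability measures, Kantorovich--Rubinstein duality ($\W_1(\cdot,\cdot)$ is the supremum of $\int f\,\ddr(\cdot)$ over $1$-Lipschitz $f$) together with the subadditivity of the supremum gives
\[
\mathcal{A} \;\le\; \frac{\alpha_1}{\alpha_1+n}\,\W_1(P_0^1,P_0^2) \;+\; \theta\,\W_1(E_n,P_0^2),
\]
which is the claimed bound. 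Equality holds exactly when one of the two terms is absent: if $\alpha_1=\alpha_2$ then $\theta=0$ and $\Pi_1-\Pi_2 = \tfrac{\alpha_1}{\alpha_1+n}(P_0^1-P_0^2)$, so duality gives equality; if $P_0^1=P_0^2$ the first difference vanishes and $\Pi_1-\Pi_2 = \theta(E_n-P_0^2)$, again forcing equality.

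I do not expect a genuine obstacle: parts 1--3 are essentially the translation $\alpha_i\mapsto\alpha_i+n$ fed into Proposition~\ref{th:dp_prior}, the one non-obvious point being the scale-invariance $\mathcal{J}(a,b)=g(b/a)$, which is what converts ``monotone in $\alpha_2$'' into ``monotone in $n$''. The most error-prone step is the bookkeeping in part 4 --- pinning down the exact coefficient $\theta$ in the decomposition of $\Pi_1-\Pi_2$ and checking that the duality bound is tight precisely in the two stated degenerate regimes --- but this is routine algebra once the decomposition is in hand.
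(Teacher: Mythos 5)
Your proof is correct and follows essentially the same route as the paper: parts 1--3 are obtained exactly as in the paper via the identity $\mathcal{J}(\alpha_1,\alpha_2,n)=\mathcal{J}(\alpha_1+n,\alpha_2+n)$ combined with the scale invariance $\mathcal{J}(a,b)=\mathcal{J}(1,b/a)$ already noted in the proof of Proposition~\ref{th:dp_prior} (you simply make the monotonicity-in-$n$ and the $O(1/n^2)$ bookkeeping more explicit than the paper does). For part 4 you reach the identical decomposition with coefficient $\theta=\frac{n(\alpha_2-\alpha_1)}{(\alpha_1+n)(\alpha_2+n)}$, only you invoke Kantorovich--Rubinstein duality on the signed measure $\Pi_1-\Pi_2$ where the paper applies the convexity inequality~\eqref{eq:convex} twice together with its equality case~\eqref{eq:convex_specific}; these are equivalent arguments, since those convexity properties are themselves consequences of duality.
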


Proposition~\ref{th:wass_dp_posteriori} gives us the means to unravel both finite-sample and asymptotic properties of the distance between posterior random measures. From an asymptotic point of view, the next result is an immediate but remarkable consequence. For sequences $(a_n)_{n \in \N}$, $(b_n)_{n \in \N}$ we write $a_n \asymp b_n$ when the quotient $(a_n/b_n)_{n \in \N}$ stays bounded from above and from below by a strictly positive constant.

\begin{theorem} 
\label{th:dp_merging}
Let $\crm^1, \crm^2$ be two gamma CRMs of parameters $(\alpha_i,P_0^i)$, for $i=1,2$, with $\alpha_1 \neq \alpha_2$ or $P_0^1 \neq P_0^2$. 
Then for any sequence $(x_n)_{n \ge 1}$ of observations on $\mathbb{X}$,
\[
\ad(\nu^{1*} \s, \nu^{2*} \s)  \asymp \frac{1}{n}.
\]
In particular, $\wbl(\mathcal{L}(\crm^{1*} \s), \mathcal{L}(\crm^{2*} \s))  \lesssim 1/n$.
\end{theorem}

Theorem~\ref{th:dp_merging} states that there will always be merging of opinions, and provides us with the merging rate of $1/n$, which holds independently of the choice of parameters $(\alpha_i,P_0^i)$ and independently of the dimension of the space $\mathbb{X}$. Tracking down the multiplicative constants in the proof of Theorem~\ref{th:dp_merging}, we actually prove the upper bound
\begin{equation*}
\ad(\nu^{1*} \s, \nu^{2*} \s) \leq \frac{1}{n} \left\{  \min(\alpha_1,\alpha_2) \Wt (P_0^1, P_0^2) + (c+2) |\alpha_2 - \alpha_1| \right\}.
\end{equation*}
Thus, even if the difference in the priors (measured by $\Wt (P_0^1, P_0^2)$ and $|\alpha_2 - \alpha_1|$) does not affect the rate, it influences the premultiplicative factor. Moreover, thanks to Proposition~\ref{th:wass_dp_posteriori} we can also study the \emph{starting time} of the merging, which highlights a different behavior for the jump and the atom component, as discussed in the remainder of this section.

\begin{figure}
\begin{center}
\includegraphics[width = 0.49\textwidth, trim=0cm 0cm 0cm 0cm, clip]{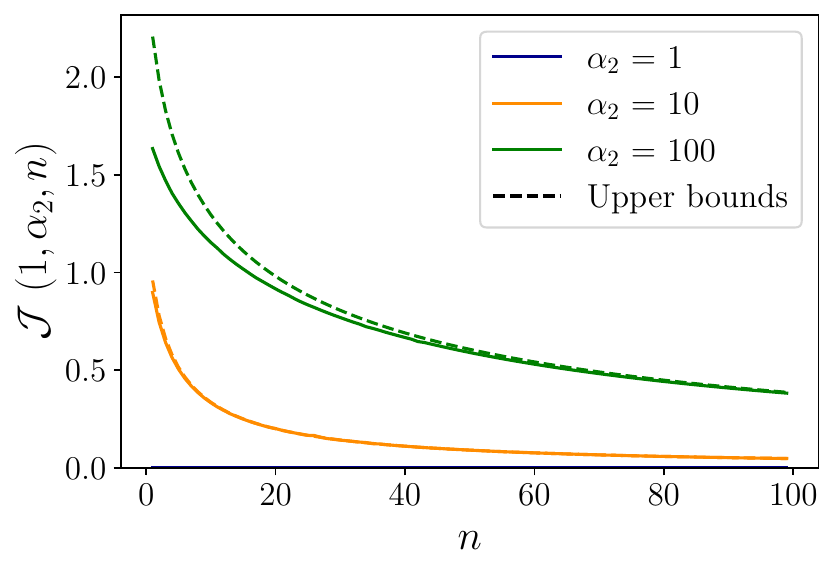}
\caption{Jump component $\mathcal{J}$ of $\wad(\mathcal{L}(\tilde \nu^{1*} \s), \mathcal{L}(\tilde  \nu^{2*} \s))$ when $\crm^1$ is a gamma$(\alpha_1=1,P_0^1)$ CRM and $\crm^2$ is a gamma$(\alpha_2,P_0^2)$ CRM, as the number of observations $n$ increases. The upper bounds are the right hand side of \eqref{eq:dp_posterior_bound}.}
\label{fig:post_gammaGamma_jumps}
\end{center}
\end{figure}

Point 1. of Proposition~\ref{th:wass_dp_posteriori} states that the jump component $\mathcal{J}$ decreases steadily in $n$, that is, the merging starts with the first observation (time 1). We illustrate this behavior in Figure~\ref{fig:post_gammaGamma_jumps}, where without loss of generality we fix $\alpha=1$ and plot the value of $\mathcal{J}$ as a function of $n$, for different values of $\alpha_2$, together with the upper bound \eqref{eq:dp_posterior_bound}, which becomes asymptotically exact as $n \to + \infty$, as stated in point 3. of Proposition~\ref{th:wass_dp_posteriori}. As for the atom component $\mathcal{A}$, this can show different behaviors depending on the parameters $(\alpha_i,P_0^i)$, as showcased by the study of the upper bounds in point 4. of Proposition~\ref{th:wass_dp_posteriori}. In particular, if $\alpha_1 = \alpha_2 = \alpha$, 
\[
\mathcal{A} = \frac{\alpha}{\alpha + n} \Wt (P_0^1, P_0^2).
\]
Thus, in this scenario $\mathcal{A}$ steadily decreases as $n$ increases, that is, the merging of the atoms starts at time 1. The left panel of Figure~\ref{fig:gammaGammaPosterioriAtoms} validates it numerically by taking $\alpha_1 = \alpha_2 = \alpha$ for different values of $\alpha$, $P_0^1$ and $P_0^2$ Gaussian distributions with different mean and same variance, and  i.i.d. observations from a Poisson distribution. 
\begin{figure}
\begin{center}
\includegraphics[width = 0.49\textwidth, trim=0cm 0cm 0cm 0cm, clip]{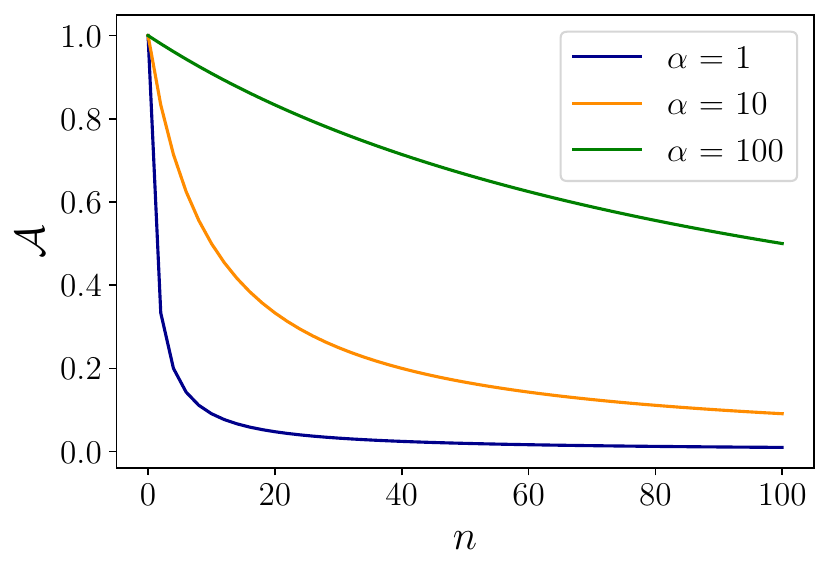}\includegraphics[width = 0.49\textwidth, trim=0cm 0cm 0cm 0cm, clip]{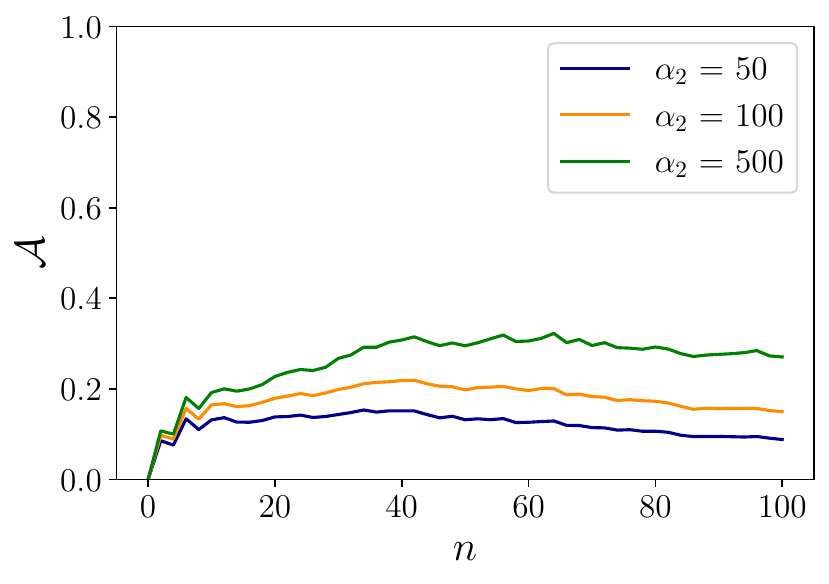}
\caption{Atom component $\mathcal{A}$ of $\wad(\mathcal{L}(\tilde \nu^{1*} \s), \mathcal{L}(\tilde  \nu^{2*} \s))$, as the number of observations $n$ increases, in two different scenarios. Left: $\crm^1$ is a gamma$(\alpha_1=\alpha,P_0^1 = \mathcal{N}(1,1))$ CRM and $\crm^2$ is a gamma$(\alpha_2 = \alpha,P_0^2 = \mathcal{N}(2,1))$ CRM, where $\mathcal{N}(m,v)$ is a Gaussian distribution of mean $m$ and variance $v$. Right: $\crm^1$ is a gamma$(\alpha_1=10,P_0^1 = \mathcal{N}(1,1))$ CRM and $\crm^2$ is a gamma$(\alpha_2,P_0^2 = \mathcal{N}(1,1))$ CRM. In both scenarios the observations are an i.i.d. sample from a Poisson distribution of mean 1.}
\label{fig:gammaGammaPosterioriAtoms}
\end{center}
\end{figure}
If on the other hand $P^1_0 = P^2_0$, 
\[
\mathcal{A} =  \frac{(\alpha_2 - \alpha_1) n}{(\alpha_1 + n)(\alpha_2 + n)}  \Wt \bigg(P_0, \frac{1}{n} \sum_{i=1}^n \delta_{x_i}\bigg).
\]  
For many standard data-generating mechanisms for the data, i.e., if we consider $(x_n)_{n \ge 1}$ as a realization of a random sequence $(X_n)_{n \ge 1}$ possibly differing from model \eqref{eq:model}, $\Wt (P_0, \frac{1}{n} \sum_{i=1}^n \delta_{x_i})$ has a non-zero limit almost surely. For example, this happens if  the data are i.i.d. samples from $P \neq P_0$ thanks to the law of large numbers or if they are exchangeable thanks to de Finetti Theorem. In such case, for $n$ large enough, the behavior of $\mathcal{A}$ with respect to $n$ reduces to the behavior of the prefactor 
\[ \omega(n) = \frac{(\alpha_2 - \alpha_1) n}{(\alpha_1 + n)(\alpha_2 + n)}.\]
This quantity can be studied analytically: it first increases and reaches a maximum 
at $n \approx \sqrt{\alpha_1 \alpha_2}$ before decreasing to $0$ at rate $1/n$, hinting that this may well be the case for $\mathcal{A}$ as well, at least when $\sqrt{\alpha_1 \alpha_2}$ is large enough. We tested this behavior numerically. In Figure~\ref{fig:gammaGammaPosterioriAtoms} on the right we fix $P_0^1 = P_0^2 = P_0$ to be a Gaussian distribution, $\alpha_1 = 10$ and $\alpha_2$ to vary, while the observations are i.i.d. from $P$ a Poisson distribution: in this setting we know that the expected value of $\Wt (P_0, \frac{1}{n} \sum_{i=1}^n \delta_{x_i})$ converges to $\Wt(P_0,P)$ with rate $1/\sqrt{n}$ \citep{Fournier2015rate}. We see that the distance is not decreasing in $n$, the maximum being achieved for $\bar n \approx 34$ when $\alpha_2 = 50$, $\bar n \approx 42$ when $\alpha_2 = 100$ and $\bar n \approx 64$ when $\alpha_2 = 500$. As expected, the maximum is better approximated by $\sqrt{\alpha_1 \alpha_2}$ (equal to approximately 21, 31, and  72, respectively)  as $\sqrt{\alpha_1 \alpha_2}$ increases.

The previous paragraph shows that if $P^1_0 = P^2_0$ but $\alpha_1 \neq \alpha_2$ then $\mathcal{A}$ may be increasing. The quantity $\mathcal{A}$ corresponds to the Wasserstein distance between the predictive distributions $\mathcal{L}(X_{n+1} | X_{1:n})$ if $(X_n)_{n \ge 1}$ follow model~\eqref{eq:model_intro}, thus is directly relevant to the practitioner. Moreover, we can also guarantee that for some values of the parameters and some data the distance $\ad(\nu^{1*} \s, \nu^{2*} \s)$ must first be increasing. Indeed,
as $\mathcal{A} = 0$ when $n=0$ and $P^1_0 = P^2_0$,
then $\ad(\nu^{1*} \s, \nu^{2*} \s) > \ad(\nu^{1} \s, \nu^{2} \s)$ as soon as
\begin{equation*}
\Wt \bigg(P_0, \frac{1}{n} \sum_{i=1}^n \delta_{x_i}\bigg) \geq \frac{(\alpha_1+n)(\alpha_2+n)}{(\alpha_2 - \alpha_1) n} (\mathcal{J}(\alpha1, \alpha2, 0) - \mathcal{J}(\alpha1, \alpha2,n)), 
\end{equation*}
meaning that the Lévy intensities a posteriori are further apart than a priori. 
For example, if $\alpha_1 = 10$, $\alpha_2 = 50$, and $n =20$, it is the case if $\Wt (P_0, \frac{1}{n} \sum_{i=1}^n \delta_{x_i}) \geq 1.012$. 
However, with our analytical tools we cannot conclude if $\wbl(\mathcal{L}(\crm^{1*} \s), \mathcal{L}(\crm^{2*} \s))$ can be increasing, being $\ad(\nu^{1*} \s, \nu^{2*} \s)$ only an upper bound to $\wbl(\mathcal{L}(\crm^{1*} \s), \mathcal{L}(\crm^{2*} \s))$. Proposition~\ref{prop:lower_bound_WBL} yields the lower bound $\wbl(\mathcal{L}(\crm^{1*} \s), \mathcal{L}(\crm^{2*} \s)) \geq \mathcal{A}/2$, so that both the upper and lower bounds are increasing for small values of $n$. However, this does not necessarily imply that $\wbl(\mathcal{L}(\crm^{1*} \s), \mathcal{L}(\crm^{2*} \s))$ is also increasing. A formal proof is left for future work.

In conclusion, depending on the total base measure and the base probabilities, we can have different behaviors for the adapted extended Wasserstein discrepancy: either a steady decrease, or an increase phase followed by a decrease, but ultimately the asymptotic rate of convergence is $1/n$. From a statistical perspective this finding is quite remarkable: even if two Bayesians have the same prior on a phenomenon, the difference between their opinions may increase if they see a finite amount of data, according to how much confidence they are willing to put on their prior guesses.

\subsection{Normalized generalized gamma process}
\label{sec:gengamma}

In this section we study the impact of the discount parameter $\sigma$ of a generalized gamma CRM, that is, the effect of using a normalized generalized gamma CRM instead of a Dirichlet process in model~\eqref{eq:model}. As in the case of Gamma CRMs, we first prove that the posterior is a Cox CRM and find the expression of its Cox scaled CRM (Proposition~\ref{th:gengamma_posteriori}), which is used to express the OT distance between the posteriors in Corollary~\ref{th:wass_gengamma_post}. We then develop an in-depth asymptotic study of the OT distance as the number of observations diverge. Since the L\'evy intensities depend on a latent variable $U=U_n$, we first need to unravel its asymptotic behavior, which crucially depends on the number of distinct observations $k=k_n$ (Theorem~\ref{th:latent}). This gives us the means to study its implications on the behavior on the merging of opinions (Theorem~\ref{th:wass_rates} and Proposition~\ref{th:wass_rates_continuous}).

Recall that $(x_1^*,\dots, x_k^*)$ are the $k \le n$ distinct observations in $(x_1,\dots,x_n)$.
We show that the posterior is a Cox CRM with the same technique used in Section~\ref{sec:dp} for the gamma CRM. We define
\begin{equation}
\label{eq:def_C}
C = C(\alpha,\sigma,n, k, U) = \alpha (1+U)^\sigma + n- k\sigma,   
\end{equation}
where we recall that $U = U_n$ has p.d.f. \eqref{eq:pdf_u}.

\begin{proposition} 
\label{th:gengamma_posteriori}
Let $\crm$ be a generalized gamma CRM of parameters $(\alpha, \sigma, P_0)$. Then $\crm^*$ is a Cox \textup{CRM} and $\crm^*\s$ is a Cox scaled CRM whose random L\'evy intensity has canonical form $ \ddr \tilde \nu \s ^* (s,x)  =  \tilde \rho^*_x (s) \ddr s \, \ddr \tilde P_0^*(x)$, where
\begin{align*}
 & \tilde \rho^*_x(s) = \frac{C^{1-\sigma}}{\Gamma(1-\sigma)} \frac{e^{-Cs}}{s^{1+\sigma}} \mathbbm{1}_{\mathbb{X}\setminus\{x_1^*,\dots,x_k^*\}}(x) +C \frac{e^{-Cs}}{s} \mathbbm{1}_{\{x_1^*,\dots,x_k^*\}}(x), \\
&\tilde P_0^* = \frac{ \alpha(1+U)^\sigma}{C}  P_0 + \frac{1}{C}\sum_{i=1}^k (n_i - \sigma) \delta_{x_i^*},
\end{align*}
and $U = U_n$ has probability density proportional to $u^{n-1}(1+u)^{k \sigma -n} e^{-\frac{\alpha}{\sigma}(1+u)^\sigma} \mathbbm{1}_{(0,+\infty)}(u)$ while $C = C_n$ is defined in~\eqref{eq:def_C}.
\end{proposition}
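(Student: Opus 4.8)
The plan is to specialise Theorem~\ref{th:lijoi_pruenster} to the homogeneous L\'evy intensity $\ddr\nu(s,x)=\frac{\alpha}{\Gamma(1-\sigma)}\frac{e^{-s}}{s^{1+\sigma}}\,\ddr s\,\ddr P_0(x)$ of the generalized gamma CRM and then recast the resulting mixture as a Cox CRM. Plugging this $\rho$ into James's theorem, the continuous part $\crm_U$ receives L\'evy intensity $e^{-Us}\ddr\rho(s)\ddr P_0(x)=\frac{\alpha}{\Gamma(1-\sigma)}\frac{e^{-(1+U)s}}{s^{1+\sigma}}\ddr s\,\ddr P_0(x)$, i.e.\ a generalized gamma CRM with scale $1+U$, while the $i$-th jump $J_i^U$ has density $f_i(s)\propto s^{n_i}e^{-Us}\ddr\rho(s)\propto s^{n_i-1-\sigma}e^{-(1+U)s}$, which is the $\textup{Gamma}(n_i-\sigma,1+U)$ density (well defined since $n_i\ge1>\sigma$). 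Because a $\textup{Gamma}(a,b)$ law is infinitely divisible with L\'evy measure $a\,e^{-bs}s^{-1}\ddr s$, Lemma~\ref{th:id_jumps} lets me realise each $J_i^U\delta_{X_i^*}$ as an independent CRM with L\'evy intensity $(n_i-\sigma)\frac{e^{-(1+U)s}}{s}\ddr s\,\delta_{X_i^*}(\ddr x)$; summing the L\'evy intensities of these independent pieces shows that $\crm^*\mid U$ is a $\textup{CRM}$ with intensity $\ddr\tilde\nu_U(s,x)=\frac{\alpha}{\Gamma(1-\sigma)}\frac{e^{-(1+U)s}}{s^{1+\sigma}}\ddr s\,\ddr P_0(x)+\sum_{i=1}^k(n_i-\sigma)\frac{e^{-(1+U)s}}{s}\ddr s\,\delta_{X_i^*}(\ddr x)$, hence $\crm^*$ is a Cox CRM. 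The density of the latent $U_n$ is read off the explicit formula in Theorem~\ref{th:lijoi_pruenster}: with Laplace exponent $\psi(u)=\frac{\alpha}{\sigma}((1+u)^\sigma-1)$ and $\int_0^{+\infty}s^{n_i}e^{-us}\ddr\rho(s)\propto(1+u)^{\sigma-n_i}$, the density is proportional to $u^{n-1}e^{-\psi(u)}\prod_{i=1}^k(1+u)^{\sigma-n_i}\propto u^{n-1}(1+u)^{k\sigma-n}e^{-\frac{\alpha}{\sigma}(1+u)^\sigma}$ on $(0,+\infty)$, after absorbing the constant $e^{\alpha/\sigma}$.

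Next I would compute the normalising constant. By Campbell's theorem, $\E(\crm^*(\mathbb{X})\mid U)=\iint s\,\ddr\tilde\nu_U(s,x)$; using $\int_0^{+\infty}s^{-\sigma}e^{-(1+U)s}\ddr s=\Gamma(1-\sigma)(1+U)^{\sigma-1}$ for the diffuse part and $\int_0^{+\infty}e^{-(1+U)s}\ddr s=(1+U)^{-1}$ for each atom, this equals $\alpha(1+U)^{\sigma-1}+(n-k\sigma)(1+U)^{-1}=c/(1+U)$ with $c=\alpha(1+U)^\sigma+n-k\sigma$. Hence $\crm\s^*\mid\tilde\nu=(1+U)c^{-1}\crm^*$, and by the scaling rule of Lemma~\ref{th:levy_scaled} in its atomic-jump form (Lemma~\ref{th:crm_divided_constant}), dividing a CRM by the constant $\kappa=c/(1+U)$ turns a jump kernel $\rho_x(s)\ddr s$ into $\kappa\rho_x(\kappa s)\ddr s$. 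Substituting with $\beta=1+U$, $\kappa=c/\beta$ and $\kappa\beta=c$ turns the diffuse kernel into $\frac{\alpha(1+U)^\sigma}{c^\sigma\Gamma(1-\sigma)}\frac{e^{-cs}}{s^{1+\sigma}}$ and each atomic kernel into $(n_i-\sigma)\frac{e^{-cs}}{s}$.

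Finally I would bring $\tilde\nu\s^*$ into canonical form via Proposition~\ref{th:canonical}, i.e.\ factor out the base probability by enforcing $\int_0^{+\infty}s\,\tilde\rho^*_x(s)\ddr s=1$ for $\tilde P_0^*$-a.e.\ $x$. The diffuse kernel has mass $\int_0^{+\infty}s\cdot\frac{\alpha(1+U)^\sigma}{c^\sigma\Gamma(1-\sigma)}\frac{e^{-cs}}{s^{1+\sigma}}\ddr s=\frac{\alpha(1+U)^\sigma}{c}$ and the $i$-th atomic kernel mass $\frac{n_i-\sigma}{c}$; dividing each kernel by its own mass yields exactly $\tilde\rho^*_x(s)=\frac{c^{1-\sigma}}{\Gamma(1-\sigma)}\frac{e^{-cs}}{s^{1+\sigma}}$ off the data points and $\tilde\rho^*_{X_i^*}(s)=c\frac{e^{-cs}}{s}$ at the $X_i^*$, while the collected masses assemble into $\tilde P_0^*=\frac{\alpha(1+U)^\sigma}{c}P_0+\frac1c\sum_{i=1}^k(n_i-\sigma)\delta_{X_i^*}$, which is a probability measure since $\alpha(1+U)^\sigma+\sum_i(n_i-\sigma)=c$; here nonatomicity of $P_0$ guarantees that the split into the $\mathbb{X}\setminus\{X_1^*,\dots,X_k^*\}$ and $\{X_1^*,\dots,X_k^*\}$ components is $P_0$-clean. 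The main obstacle is bookkeeping rather than conceptual: one must track that $c$ itself depends on $U$, so that James's scale $1+U$ on the continuous part and the additional normalising scale $\kappa=c/(1+U)$ compose to the single scale $c$, and that the atomic jump components --- Dirac masses in the $\mathbb{X}$-coordinate --- are treated by the atomic extension of the scaling lemma, not merely its diffuse version; once these are kept straight, all the identities reduce to elementary integrals against Gamma densities.
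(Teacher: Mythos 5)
Your proposal is correct and follows essentially the same route as the paper: specialize Theorem~\ref{th:lijoi_pruenster}, absorb the gamma-distributed jumps into the L\'evy intensity via Lemma~\ref{th:id_jumps}, compute $\E(\crm^*(\mathbb{X})\mid U)=c/(1+U)$ by Campbell's theorem, rescale via Lemma~\ref{th:crm_divided_constant}, and extract the canonical decomposition using the nonatomicity of $P_0$ to separate the diffuse and atomic supports. The only difference is that you derive the density of $U_n$ explicitly from the Laplace exponent and the $\tau_{n_i}$ factors, a computation the paper omits for brevity; all your intermediate identities check out.
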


We then use this expression to compute the OT distance a posteriori between a generalized gamma and a gamma CRM. 

\begin{proposition}
\label{th:wass_gengamma_post}
Let $\crm^1$ be a generalized gamma CRM of parameters $(\alpha, \sigma, P_0)$, and let $\crm^2$ be a gamma CRM with same total base measure $\alpha$ and base probability $P_0$. Then,
\[
\wad(\mathcal{L}(\tilde \nu^{1*} \s), \mathcal{L}(\tilde  \nu^{2*} \s))  = \E_U(\mathcal{J}_1^U + \mathcal{J}_2^U + \mathcal{A}^U),
\]
where
\begin{align*}
&\mathcal{J}_1^U = \frac{\alpha (1+U)^ \sigma }{C} \W_{*}\bigg( \frac{C^{1-\sigma}}{\Gamma(1-\sigma)} \frac{e^{-Cs}}{s^{1+\sigma}} \ddr s,(\alpha+ n)\frac{e^{-(\alpha+ n)s}}{s} \ddr s\bigg), \\
& \mathcal{J}_2^U = \frac{n-k \sigma}{C} \W_{*}\bigg( C \frac{e^{-Cs}}{s} \ddr s, (\alpha+ n)\frac{e^{-(\alpha+ n)s}}{s} \ddr s\bigg), \\
& \mathcal{A}^U = \Wt \bigg(\frac{\alpha ( U +1)^\sigma}{C} P_0 + \frac{1}{C}\sum_{i=1}^k (n_i - \sigma) \delta_{X_i^*}, \frac{\alpha}{\alpha + n} P_0 + \frac{1}{\alpha+n} \sum_{i=1}^k n_i \delta_{x_i^*} \bigg),
\end{align*}
with $U = U_n$ and $C = C_n$ are defined as in Proposition~\ref{th:gengamma_posteriori}.
\end{proposition}

\begin{remark}The expression in Proposition~\ref{th:wass_gengamma_post} can be approximated through Monte Carlo integration, that is, by taking $u_1, \dots, u_N \simiid \mathcal{L}(U)$ and then evaluating the unbiased estimator
\[
\hat \wad = \frac{1}{N} \sum_{i=1}^N (\mathcal{J}_1^{u_i} + \mathcal{J}_2^{u_i} + \mathcal{A}^{u_i}),
\]
where $\mathcal{J}_1^{u_i}$ and $\mathcal{J}_2^{u_i}$ can be evaluated by numerical integration and $\mathcal{A}^{u_i}$ only involves the classical 1-Wasserstein distance on $\mathbb{X}$ with a truncated metric, which can be approximated by many existing efficient algorithms \citep{Peyre2019computational}.
\end{remark}

To investigate general features of $\wad$ as the sample size $n$ grows, one needs to study the asymptotic behavior of the latent variables $U=U_n$. The next result finds the asymptotic distribution of $(1+U)^\sigma$ as the number of observations diverge. We focus on $(1+U)^\sigma$ instead of $U$ because the former is featured in the expressions of Proposition~\ref{th:wass_gengamma_post}. The core of the proof shows that $(1+U)^\sigma$ has a log-concave density, and concentrates around a point of maximum of its log-density. Indeed, with a change of variable one finds that the density of $(1+U)^\sigma$ is proportional to $\exp(-f_n(x))$, where
\[
f_n(x) =  - (k_n-1) \log(x) + \frac{\alpha}{\sigma}  x - (n-1) \log \bigg( 1- \frac{1}{x ^{1/\sigma}} \bigg)
\]
is a convex function. If $f_n$ were proportional to $n$, say $f_n(x) = n g(x)$ for some smooth function $g$, then the result would come directly from Laplace's method. Here the situation is more delicate as there are different scales (namely, the constant $\alpha/\sigma$, $k_n$ and $n$) in the function $f_n$. We extend Laplace's method to this setting in the Supplementary Material \citep{CatalanoLavenant2025proofs}, which yields a general result of independent interest. Note on the other hand that the density of $U$ itself is \emph{not} log-concave.

Before stating the result let us first introduce the following standard notations for positive deterministic sequences $(a_n)_{n \in \N}$ and $(b_n)_{n \in \N}$:
\begin{itemize}
\item $a_n \ll b_n$ if $(a_n/b_n)_{n \in \N}$ converges to $0$. Equivalently, $a_n = o(b_n)$.
\item $a_n \lesssim b_n$ if $(a_n/b_n)_{n \in \N}$ stays bounded from above.
\item $a_n \sim b_n$ if $(a_n/b_n)_{n \in \N}$ converges to $1$.
\item $a_n \asymp b_n$ if $(a_n/b_n)_{n \in \N}$ stays bounded from above and from below by a strictly positive constant. Equivalently, $a_n \lesssim b_n$ and $b_n \lesssim a_n$.
\end{itemize}

\begin{theorem} 
\label{th:latent}
As $n \rightarrow +\infty$, for the $L^1$ convergence,
\begin{equation*}
\lim_{n \to + \infty} \frac{(1+U)^\sigma}{r_n} = 1,
\end{equation*}
where the sequence $(r_n)_{n \in \N}$ is defined by
\[
r_n =
\begin{cases}
\alpha^{-\frac{\sigma}{1+\sigma}} \ n^\frac{\sigma}{1+\sigma} &\text{\qquad if } k  \ll n^{\frac{\sigma}{1+\sigma}},\\
\gamma \ n^\frac{\sigma}{1+\sigma} &\text{\qquad if } k \sim \lambda n^{\frac{\sigma}{1+\sigma}},\\
\sigma \alpha^{-1} \ k &\text{\qquad if } k \gg  n^{\frac{\sigma}{1+\sigma}},
\end{cases}
\]
being here $\gamma$ is the unique solution of $\sigma \lambda x^{-1} +  x^{-(1+\sigma)/\sigma} = \alpha$.
\end{theorem}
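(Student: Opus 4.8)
The argument rests on the change of variables recorded just above the statement: with $X_n := (1+U)^\sigma$, the density of $U$ from Proposition~\ref{th:gengamma_posteriori} becomes a density on $(1,+\infty)$ proportional to $e^{-f_n(x)}$, where
\[
f_n(x) = -(k_n-1)\log x + \frac{\alpha}{\sigma}\,x - (n+1)\log(1-x^{-1/\sigma})
\]
is strictly convex. First I would note that this makes the law of $X_n$ log-concave, with a unique mode $x_n^\ast$ equal to the unique zero of
\[
f_n'(x) = \frac{\alpha}{\sigma} - \frac{k_n-1}{x} - \frac{n+1}{\sigma}\,\frac{x^{-1/\sigma-1}}{1-x^{-1/\sigma}},
\]
which is strictly increasing and runs from $-\infty$ as $x\downarrow 1$ to $\alpha/\sigma>0$ as $x\uparrow+\infty$. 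The proof then splits into (i) locating $x_n^\ast$ precisely and (ii) showing that the log-concave law of $X_n$, rescaled by $x_n^\ast$, concentrates at $1$ in $L^1$.

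For step (i) I would first read off crude a priori bounds from $f_n'(x_n^\ast)=0$, that is, from $\alpha = \sigma(k_n-1)/x_n^\ast + (n+1)(x_n^\ast)^{-1/\sigma-1}/(1-(x_n^\ast)^{-1/\sigma})$: dropping one or the other term on the right gives $x_n^\ast \ge \sigma(k_n-1)/\alpha$ and $x_n^\ast \ge ((n+1)/\alpha)^{\sigma/(1+\sigma)}$, while exhibiting an explicit $x$ of the form $c\max\{k_n,n^{\sigma/(1+\sigma)}\}$ with $f_n'(x)>0$ gives the matching upper bound, so $x_n^\ast\to+\infty$ and $x_n^\ast\asymp\max\{k_n,n^{\sigma/(1+\sigma)}\}$. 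Using $x_n^\ast\to+\infty$ to absorb $(n+1)/n$ and $1/(1-(x_n^\ast)^{-1/\sigma})$ into $1+o(1)$, the stationarity relation becomes $\alpha = \sigma(k_n-1)(x_n^\ast)^{-1} + n\,(x_n^\ast)^{-(1+\sigma)/\sigma}(1+o(1))$. Writing $x_n^\ast = \gamma_n\,n^{\sigma/(1+\sigma)}$ this reads $\alpha = \sigma(k_n-1)n^{-\sigma/(1+\sigma)}\gamma_n^{-1} + \gamma_n^{-(1+\sigma)/\sigma}(1+o(1))$: if $k_n\ll n^{\sigma/(1+\sigma)}$ the first term disappears and $\gamma_n\to\alpha^{-\sigma/(1+\sigma)}$; if $k_n\sim\lambda n^{\sigma/(1+\sigma)}$ the first term tends to $\sigma\lambda/\gamma_n$ and $\gamma_n\to\gamma$, the unique positive root of $\sigma\lambda x^{-1}+x^{-(1+\sigma)/\sigma}=\alpha$ (the left-hand side decreasing strictly from $+\infty$ to $0$). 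If instead $k_n\gg n^{\sigma/(1+\sigma)}$, writing $x_n^\ast = \beta_n k_n$ makes the $n$-term $o(1)$ and forces $\beta_n\to\sigma/\alpha$. In every case $x_n^\ast/r_n\to1$ with $r_n$ as stated.

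For step (ii) I would feed this into the extended Laplace estimate for one-dimensional log-concave families of Lemma~\ref{lemma:aux_proof_latent}. The relevant quantity is the curvature at the mode: expanding as $x_n^\ast\to+\infty$,
\[
f_n''(x_n^\ast) = \frac{k_n-1}{(x_n^\ast)^2} + \frac{n(1+\sigma)}{\sigma^2}\,(x_n^\ast)^{-1/\sigma-2}(1+o(1)),
\]
so that with $x_n^\ast\asymp\max\{k_n,n^{\sigma/(1+\sigma)}\}$,
\[
f_n''(x_n^\ast)(x_n^\ast)^2 = (k_n-1) + \frac{n(1+\sigma)}{\sigma^2}(x_n^\ast)^{-1/\sigma}(1+o(1)) \longrightarrow +\infty
\]
in all three regimes (it is $\gtrsim k_n$ when $k_n\gg n^{\sigma/(1+\sigma)}$ and $\gtrsim n(x_n^\ast)^{-1/\sigma}\asymp n^{\sigma/(1+\sigma)}$ otherwise). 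Log-concavity then yields local concentration of $X_n$ on the scale $x_n^\ast/\sqrt{f_n''(x_n^\ast)(x_n^\ast)^2} = o(x_n^\ast)$ together with the exponential tail bounds that make $X_n/x_n^\ast$ uniformly integrable, whence $\E|X_n/x_n^\ast - 1|\to0$; combined with $x_n^\ast/r_n\to1$ this gives the announced $L^1$ convergence.

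The main obstacle is the borderline regime $k_n\sim\lambda n^{\sigma/(1+\sigma)}$: there the three scales $\alpha/\sigma$, $k_n$ and $n$ all contribute to $f_n$, which therefore is not $n$ times a fixed profile, so ordinary Laplace asymptotics do not apply and one must invoke the refined Lemma~\ref{lemma:aux_proof_latent}. A secondary difficulty is obtaining $L^1$ — rather than merely in-probability — convergence uniformly across the three regimes, which requires controlling the left tail of $X_n$ near $x=1$, where $f_n$ blows up.
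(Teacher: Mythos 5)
Your proposal is correct and follows essentially the same route as the paper: the same change of variables to a log-concave density $\propto e^{-f_n}$, the same location of the mode via the stationarity equation with the same three-regime asymptotics for $r_n$, the same curvature computation showing $f_n''\asymp 1/r_n$ near the mode, and the same appeal to Lemma~\ref{lemma:aux_proof_latent}. The only point worth making explicit is that the lemma requires the \emph{two-sided} bound $\alpha\le r_n f_n''(x)\le\beta$ on a window of width $r_n^\gamma$, $\gamma>1/2$ (not merely $f_n''(x_n^\ast)(x_n^\ast)^2\to+\infty$ at the mode), but your displayed estimate $f_n''(x_n^\ast)(x_n^\ast)^2\asymp x_n^\ast$ already delivers this once extended to $[x_n^\ast/2,\,2x_n^\ast]$.
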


Theorem~\ref{th:latent} highlights a transition in the asymptotic behavior of $(1+U)^\sigma$  as the number of unique values $k$ increases. Indeed, one can easily derive that
\[ \E((1+U)^\sigma) \asymp \max \left( n^{\frac{\sigma}{1+\sigma}}, k \right). \]
This implies that the growth rate of $(1+U)^\sigma$ is not affected by $k$ if $k$ does not diverge or diverge sufficiently slowly; otherwise it coincides with $k$ itself, as shown in Figure~\ref{fig:phase_transition}. 

We now use Theorem~\ref{th:latent} to study the asymptotic behavior of the distance in Proposition~\ref{th:wass_gengamma_post}.

\begin{figure}
\begin{center}
\includegraphics[width = 0.437\textwidth, trim=0cm 0cm 0cm 0cm, clip]{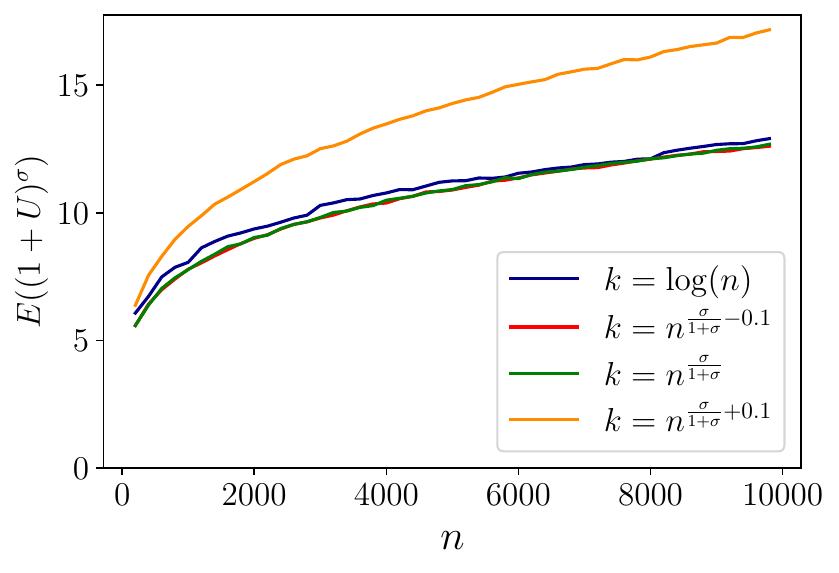}
\caption{Expected value of $(1+U)^\sigma$ for different numbers of unique values $k = k_n$, as the number of observations $n$ increases. The discount parameter is fixed to $\sigma = 0.3$. 
}
\label{fig:phase_transition}
\end{center}
\end{figure}

\begin{figure}
\begin{center}
\includegraphics[width = 0.437\textwidth, trim=0cm 0cm 0cm 0cm, clip]{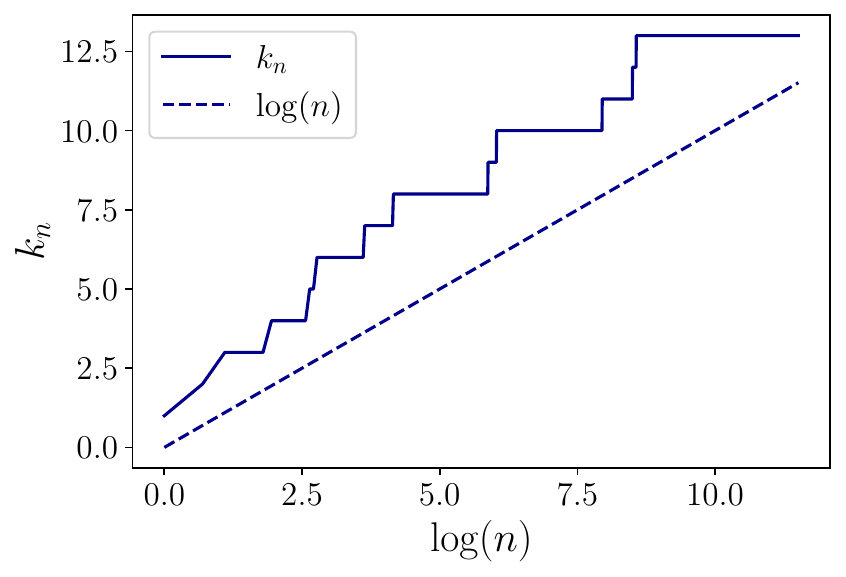}
\includegraphics[width = 0.553\textwidth, trim=0cm 0cm 0cm 0cm, clip]{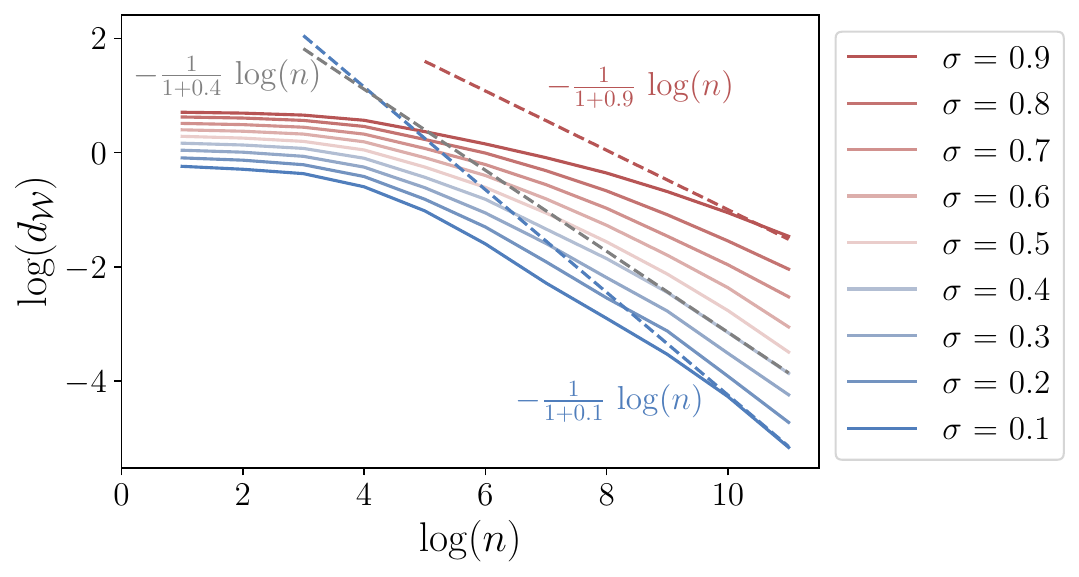}
\caption{Left: Unique values $k = k_n$ for a conditionally i.i.d. sample from a Dirichlet process of parameters $(\bar \alpha = 1, \bar P_0 = \mathcal{N}(0,1))$, as the number of observations $n$ increases. Right: $\wad(\mathcal{L}(\tilde \nu^{1*} \s), \mathcal{L}(\tilde  \nu^{2*} \s))$ when $\crm^1$ is a generalized gamma$(\alpha_1=100,P_0^1 = \mathcal{N}(0,1), \sigma)$ CRM and $\crm^2$ is a gamma$(\alpha_2 = 100,P_0^2 = \mathcal{N}(0,1))$ CRM, as the number of observations $n$ increases exponentially. The observations are conditionally i.i.d. sample from a Dirichlet process of parameters $(\bar \alpha = 1, \bar P_0 = \mathcal{N}(0,1))$.
}
\label{fig:DP}
\end{center}
\end{figure}

\begin{figure}
\begin{center}
\includegraphics[width = 0.437\textwidth, trim=0cm 0cm 0cm 0cm, clip]{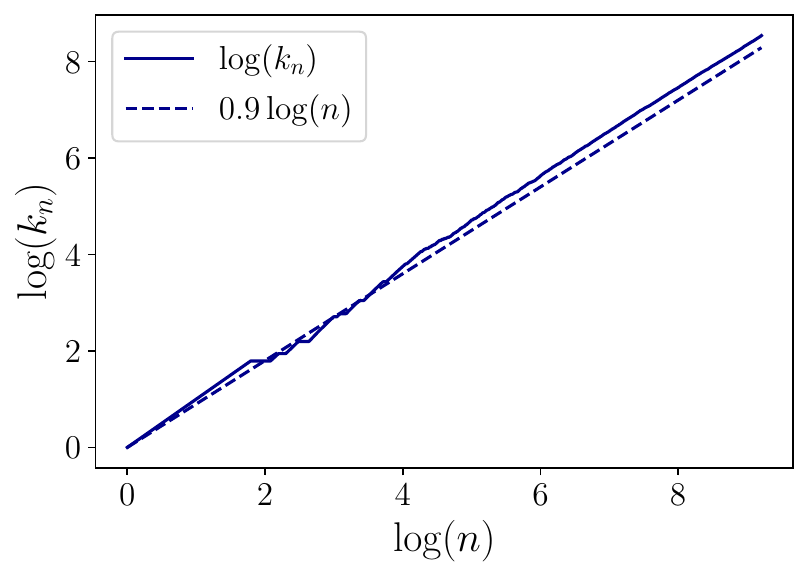}
\includegraphics[width = 0.553\textwidth, trim=0cm 0cm 0cm 0cm, clip]{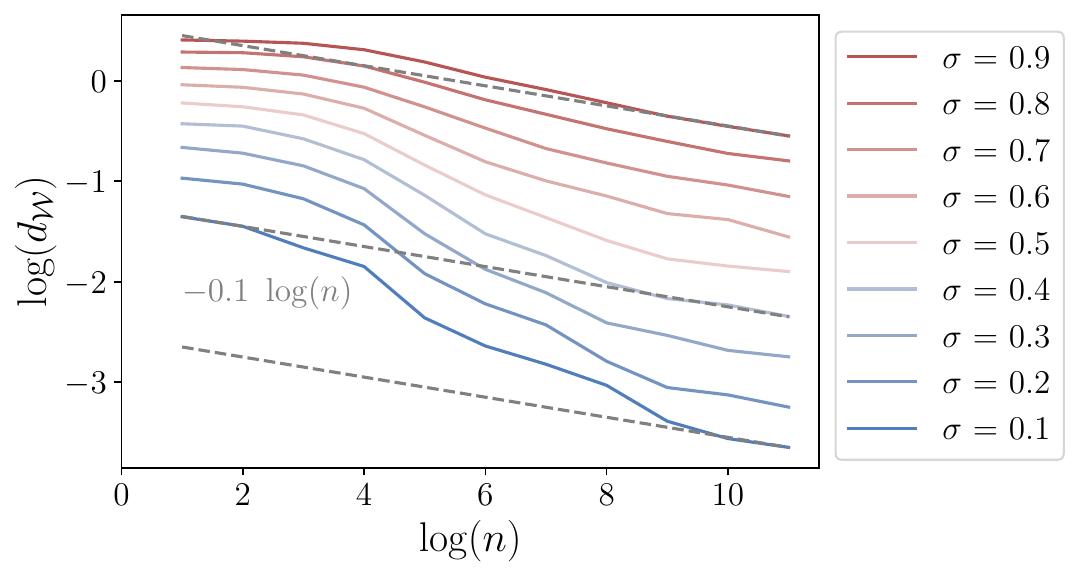}
\caption{Left: Unique values $k = k_n$ for a conditionally i.i.d. sample from a Pitman-Yor process of parameters $(\bar \alpha = 1, \bar \sigma = 0.9, \bar P_0 = \mathcal{N}(0,1))$, as the number of observations $n$ increases. Right: $\wad(\mathcal{L}(\tilde \nu^{1*} \s), \mathcal{L}(\tilde  \nu^{2*} \s))$ when $\crm^1$ is a generalized gamma$(\alpha_1=100,P_0^1 = \mathcal{N}(0,1), \sigma)$ CRM and $\crm^2$ is a gamma$(\alpha_2 = 100,P_0^2 = \mathcal{N}(0,1))$ CRM, as the number of observations $n$ increases exponentially. The observations are conditionally i.i.d. sample from a Pitman-Yor process of parameters $(\bar \alpha = 1, \bar \sigma = 0.9, \bar P_0 = \mathcal{N}(0,1))$.
}
\label{fig:PY}
\end{center}
\end{figure}

\begin{theorem} 
\label{th:wass_rates}
Let $\tilde \mu_1,\tilde \mu_2$ as in Proposition~\ref{th:wass_gengamma_post} and let $(x_n)_{n \ge 1}$ be such that $k_n \ll n$. Then as $n \rightarrow +\infty$, 
\[
\wad(\mathcal{L}(\tilde \nu^{1*} \s), \mathcal{L}(\tilde  \nu^{2*} \s))  \asymp \max (n^{-1/(1+\sigma)} , k/n ).
\]
In particular, $\wbl(\mathcal{L}(\crm^{1*} \s), \mathcal{L}(\crm^{2*} \s)) \lesssim \max (n^{-1/(1+\sigma)} , k/n)$.
\end{theorem}

Theorem~\ref{th:wass_rates} shows many interesting properties of the merging between a gamma and a generalized gamma CRM. First, recall that when comparing two gamma CRMs as in Section~\ref{sec:dp}, the change in the parameters $(\alpha, P_0)$ did not change the merging rate of $1/n$. In contrast, Theorem~\ref{th:wass_rates} shows that when the number of distinct values $k$ diverges sufficiently slowly, a change in $\sigma$ has a heavy impact on the merging, whose rate of convergence slows down to $n^{- \frac{1}{1+\sigma}} \in (1/n, 1/\sqrt{n})$. To illustrate this situation, in Figure~\ref{fig:DP} we sampled conditionally i.i.d. observations from a Dirichlet process of parameters $(\bar \alpha, \bar P_0)$, since it is well known that $k_n \sim \bar \alpha \log(n)$ a.s. \citep{KorwarHollander1973} and thus $k_n \lesssim n^{\frac{\sigma}{1+\sigma}}$ a.s. for every $\sigma$. 

Moreover, Theorem~\ref{th:wass_rates} shows that as the number of distinct values $k$ increases, the rate drastically deteriorates and does not depend anymore on $\sigma$. This is pictured in Figure~\ref{fig:PY}, where we sampled conditionally i.i.d. observations from a Pitman-Yor process \citep{PitmanYor1997} of parameters $(\bar \alpha, \bar \sigma, \bar P_0)$, since in such a case it is well known that $\mathbb{E}(k_n) \asymp n^{\bar \sigma}$ \citep{Pitman2006}, and thus for $\bar \sigma \in [0.5, 1)$ we expect $k_n \gtrsim n^{\frac{\sigma}{1+\sigma}}$ for every $\sigma$. 

Eventually, if $k \asymp n$ there will be no merging of opinions. As a notable example, this happens a.s. when the observations are sampled i.i.d. from a continuous distribution, as described in the next result. It is worth underlining that the lack of merging should not worry the practitioner too much: since model \eqref{eq:model} with normalization gives a positive probability to ties among the observations, i.e., $\mathbb{P}(X_i = X_j) >0$, when dealing with independent and continuous data other models for random densities are to be preferred, such as kernel mixtures over a random probability measure.

\begin{proposition} 
\label{th:wass_rates_continuous}
Let $\tilde \mu_1,\tilde \mu_2$ as in Proposition~\ref{th:wass_gengamma_post} and let $x_n \simiid P$, where $P$ is a continuous distribution.
Then as $n \rightarrow +\infty$, for a.e. realization of $(x_n)_{n \ge 1}$,
\[
\wad(\mathcal{L}(\tilde \nu^{1*} \s), \mathcal{L}(\tilde  \nu^{2*} \s)) \rightarrow \sigma \bigg( \int_0^{+\infty} \bigg| \frac{\Gamma(-\sigma,t)}{\Gamma(1-\sigma)} - \Gamma(0,t) \bigg| \ddr t + \Wt(P_0, P ) \bigg).
\]
Moreover, the limit is increasing in $\sigma$.
\end{proposition}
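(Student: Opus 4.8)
\emph{Proof plan.} The plan is to start from the exact expression $d_\W(\crm^{1*}\s,\crm^{2*}\s)=\E_U(\mathcal{J}_1^U+\mathcal{J}_2^U+\mathcal{A}^U)$ of Corollary~\ref{th:wass_gengamma_post} and to analyse each of the three terms separately as $n\to+\infty$. Since $P$ is continuous, for a.e.\ realization of $(X_n)_{n\ge1}$ all observations are pairwise distinct, so $k_n=n$ and $n_i=1$ for all $i$; moreover $\W_1(\hat P_n,P)\to0$ (the standard fact valid because $P$ has finite first moment) and $\frac1n\sum_{i=1}^n d_\mathbb{X}(X_i,x_0)\to\int d_\mathbb{X}(\cdot,x_0)\,\ddr P<+\infty$ for a fixed $x_0\in\mathbb{X}$, by the law of large numbers; here $\hat P_n=\frac1n\sum_{i=1}^n\delta_{X_i}$. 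Fix such a realization. With $k_n=n$ one has $c=\alpha(1+U)^\sigma+n(1-\sigma)$, and I would show that each of $\mathcal{J}_1^U$, $\mathcal{J}_2^U$, $\mathcal{A}^U$ converges in probability (over the law of $U$) to a deterministic limit while staying uniformly bounded, so that dominated convergence transfers the limits to $\E_U$.

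First I would control the latent variable: with $k_n=n$ we are in the third regime of Theorem~\ref{th:latent} (since $\sigma<1$ gives $n\gg n^{\sigma/(1+\sigma)}$), so $(1+U)^\sigma/r_n\to1$ in $L^1$ with $r_n=(\sigma/\alpha)n$; writing $W_n:=\alpha(1+U)^\sigma/(\sigma n)$ this reads $W_n\to1$ in $L^1$, hence in probability, whence $c=n(\sigma W_n+1-\sigma)$, $c/n\to1$, $\alpha(1+U)^\sigma/c\to\sigma$ and $a_n:=(\alpha+n)/c\to1$ in probability, all staying in a fixed compact range (using $c\ge n(1-\sigma)$ and $\alpha(1+U)^\sigma/c\in[0,1]$). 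Then, with the jump measures of Proposition~\ref{th:gengamma_posteriori}, a change of variables in Lemma~\ref{th:extended_wass} gives the $\W_{*1}$ between the generalized-gamma part and the gamma part as $\int_0^{+\infty}\bigl|\frac{\Gamma(-\sigma,v)}{\Gamma(1-\sigma)}-a_n\Gamma(0,a_nv)\bigr|\,\ddr v$ and the $\W_{*1}$ between the two gamma parts as $\int_0^{+\infty}\bigl|\Gamma(0,v)-a_n\Gamma(0,a_nv)\bigr|\,\ddr v$; both are continuous functions of $a_n$, with values $\int_0^{+\infty}\bigl|\frac{\Gamma(-\sigma,t)}{\Gamma(1-\sigma)}-\Gamma(0,t)\bigr|\,\ddr t$ and $0$ at $a_n=1$ (continuity by dominated convergence, using $\int_0^{+\infty}\Gamma(0,w)\,\ddr w=1$ and the integrability near $0$ of $v\mapsto\Gamma(-\sigma,v)$, which behaves like $v^{-\sigma}/\sigma$ and is integrable since $\sigma<1$). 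Combining, $\mathcal{J}_1^U=\frac{\alpha(1+U)^\sigma}{c}\W_{*1}(\cdots)\to\sigma\int_0^{+\infty}\bigl|\frac{\Gamma(-\sigma,t)}{\Gamma(1-\sigma)}-\Gamma(0,t)\bigr|\,\ddr t$ and $\mathcal{J}_2^U=\frac{n(1-\sigma)}{c}\W_{*1}(\cdots)\to0$ in probability, both bounded by $2$ by Lemma~\ref{th:extended_wass} as the posterior Lévy measures have first moment about $0$ equal to $1$.

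For the atom term, write $\mathcal{A}^U=\W_1(\mu_n^1,\mu_n^2)$ with $\mu_n^1=\frac{\alpha(1+U)^\sigma}{c}P_0+\frac{1-\sigma}{c}\sum_{i=1}^n\delta_{X_i}$ and $\mu_n^2=\frac{\alpha}{\alpha+n}P_0+\frac1{\alpha+n}\sum_{i=1}^n\delta_{X_i}$. Kantorovich--Rubinstein duality yields the mixture identity $\W_1(\lambda\mu+(1-\lambda)\eta,\lambda\nu+(1-\lambda)\eta)=\lambda\W_1(\mu,\nu)$; applying the same duality to $\mu_n^1-(\sigma P_0+(1-\sigma)P)=(\frac{\alpha(1+U)^\sigma}{c}-\sigma)(P_0-P)+\frac{(1-\sigma)n}{c}(\hat P_n-P)$ and $\mu_n^2-P=\frac{\alpha}{\alpha+n}(P_0-P)+\frac{n}{\alpha+n}(\hat P_n-P)$, together with $\W_1(\hat P_n,P)\to0$ and $\alpha(1+U)^\sigma/c\to\sigma$, shows $\W_1(\mu_n^1,\sigma P_0+(1-\sigma)P)\to0$ in probability and $\W_1(\mu_n^2,P)\to0$; hence by the triangle inequality and the mixture identity $\mathcal{A}^U\to\W_1(\sigma P_0+(1-\sigma)P,P)=\sigma\W_1(P_0,P)$ in probability, while $\mathcal{A}^U\le\W_1(\mu_n^1,\delta_{x_0})+\W_1(\mu_n^2,\delta_{x_0})$ is bounded by a finite data-dependent constant since $\int d_\mathbb{X}(\cdot,x_0)\,\ddr P_0<+\infty$ (assumption of Corollary~\ref{th:wass_gengamma_post}) and $\frac1n\sum d_\mathbb{X}(X_i,x_0)$ is bounded. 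Assembling, each term converges in probability and is uniformly bounded, so dominated convergence over the law of $U$ gives $d_\W(\crm^{1*}\s,\crm^{2*}\s)\to\sigma\int_0^{+\infty}\bigl|\frac{\Gamma(-\sigma,t)}{\Gamma(1-\sigma)}-\Gamma(0,t)\bigr|\,\ddr t+\sigma\W_1(P_0,P)$, which is the claim.

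The main obstacle is the passage to the limit inside $\E_U$: the three integrands depend on $U=U_n$ only through the scalar $W_n$, but $W_n$ is controlled only in $L^1$ by Theorem~\ref{th:latent}, so one must combine this with the uniform bounds (by $2$ for the jump terms via Lemma~\ref{th:extended_wass}, and by a finite data-dependent constant for $\mathcal{A}^U$) and with the continuity at $a=1$ of the functionals $a\mapsto\int_0^{+\infty}\bigl|\frac{\Gamma(-\sigma,v)}{\Gamma(1-\sigma)}-a\Gamma(0,av)\bigr|\,\ddr v$ and $a\mapsto\int_0^{+\infty}\bigl|\Gamma(0,v)-a\Gamma(0,av)\bigr|\,\ddr v$. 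Establishing this continuity (and an a priori finite bound on the first functional) is the one genuinely analytic point: it rests on the integrability of $\Gamma(0,\cdot)$ and of $\Gamma(-\sigma,\cdot)/\Gamma(1-\sigma)$ on $(0,+\infty)$, together with the dominating bound $a\Gamma(0,av)\le C\,\Gamma(0,v/2)$ valid for $a$ in a compact neighbourhood of $1$.
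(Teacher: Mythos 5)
Your proposal is correct and follows essentially the same route as the paper's proof: fix a realization with $k_n=n$, use Theorem~\ref{th:latent} to get $\alpha(1+U)^\sigma/c\to\sigma$ and $c/(\alpha+n)\to1$, treat the three terms of Corollary~\ref{th:wass_gengamma_post} separately via a change of variables and dominated convergence for the jump terms (with the uniform bound by $2$ from Lemma~\ref{th:extended_wass}) and via $\W_1$-convergence of the empirical measure for the atom term. The only cosmetic differences are that the paper bounds $\W_{*1}$ in $\mathcal{J}_2^U$ by the logarithmic estimate of Proposition~\ref{th:dp_prior} where you invoke continuity of $a\mapsto\int_0^{+\infty}|\Gamma(0,v)-a\Gamma(0,av)|\,\ddr v$ at $a=1$, and that it cites the metrization property of $\W_1$ where you argue through Kantorovich--Rubinstein duality.
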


We observe that the limit in Proposition~\ref{th:wass_rates_continuous} does not depend on $\alpha$ and it is composed by two terms. The first term coincides with the distance \emph{a priori} between a scaled gamma and scaled generalized gamma with same total base measure, and can thus be seen as a measure of discrepancy between the priors for the jumps. The second term coincides with the distance between the base measure and the law of the observations, and can thus be seen as a measure of discrepancy between the (common) prior for the atoms and the data. As expected, the limit vanishes as $\sigma \to 0$ and it increases with $\sigma$. \\
From a statistical perspective, Theorem~\ref{th:wass_rates} detects the regimes where using a normalized generalized gamma CRM will actually guarantee a different learning outcome, with the following take-home message: either the number of observations $n$ is sufficiently small or the number of distinct values $k$ should be sufficiently large, and how large it must be depends on the choice of discount $\sigma$, namely, $k \gtrsim n^{\frac{\sigma}{1+\sigma}}$.
As notable examples, this happens when the observations are i.i.d. from a continuous distribution or when they are conditionally i.i.d. given a normalized generalized gamma CRM or a Pitman-Yor process, but it does not happen when the observations are i.i.d. from a discrete distribution with a finite number of atoms or when they are conditionally i.i.d. given a Dirichlet process or a hierarchical Dirichlet process.

\begin{remark}
Gamma and generalized gamma random measures are arguably the most used classes of completely random measures but specific needs could lead to the use of other CRMs not belonging to this class. In such a case, the merging could be explored with techniques similar to those exposed in this section. Whenever the L\'evy measure is available in analytical form, we expect an analogous of Proposition~16 to be readily available. The difficult part is to find the asymptotic behavior of the latent variable $U$. Our analysis is based on an ad hoc strategy that recognizes a log-concave distribution for a transformation of $U$ that appears in our expressions. We currently do not know to what extent this feature can be extended to the latent variables arising from other CRMs. 
\end{remark}

\section*{Acknowledgments}

The authors would like to thank the Editor, an Associate Editor, and three Referees for the constructive feedback, which has led to significant improvement to the present work. The authors would also like to thank A. Lijoi and I. Pr\"unster for their valuable insights. Part of this research was conducted at the Department of Statistics at the University of Warwick, during a visit by H. Lavenant and while M. Catalano was a Harrison Early Career Assistant Professor. The Department is warmly acknowledged for its hospitality. H. Lavenant is also affiliated with the Bocconi Institute for Data Science and Analytics (BIDSA). M. Catalano was partially supported by a Research Development Fund from the University of Warwick.

\bibliography{bibmerging}

\begin{thebibliography}{6}
\providecommand{\natexlab}[1]{#1}
\providecommand{\url}[1]{\texttt{#1}}
\expandafter\ifx\csname urlstyle\endcsname\relax
  \providecommand{\doi}[1]{doi: #1}\else
  \providecommand{\doi}{doi: \begingroup \urlstyle{rm}\Url}\fi

\bibitem[Catalano et~al.(2020)Catalano, Lijoi, and Prünster]{Catalano2020}
M.~Catalano, A.~Lijoi, and I.~Prünster.
\newblock Approximation of {B}ayesian models for time-to-event data.
\newblock \emph{Electron. J. Statist.}, 14\penalty0 (2):\penalty0 3366--3395,
  2020.

\bibitem[Kallenberg(2017)]{Kallenberg2017}
O.~Kallenberg.
\newblock \emph{Random Measures, Theory and Applications}.
\newblock Probability Theory and Stochastic Modelling. Springer International
  Publishing, Cham, 2017.
\newblock ISBN 9783319415987.

\bibitem[Pitman and Yor(1992)]{PitmanYor1992}
J.~Pitman and M.~Yor.
\newblock {Arcsine Laws and Interval Partitions Derived from a Stable
  Subordinator}.
\newblock \emph{Proceedings of the London Mathematical Society}, s3-65\penalty0
  (2):\penalty0 326--356, 09 1992.
\newblock ISSN 0024-6115.

\bibitem[Santambrogio(2015)]{Santambrogio2015}
F.~Santambrogio.
\newblock Optimal transport for applied mathematicians.
\newblock \emph{Birkh{\"a}user, NY}, 55\penalty0 (58-63):\penalty0 94, 2015.

\bibitem[Sato(1999)]{Sato1999}
K.~Sato.
\newblock \emph{L{\'e}vy Processes and Infinitely Divisible Distributions}.
\newblock Cambridge Studies in Advanced Mathematics. Cambridge University
  Press, Cambridge, 1999.
\newblock ISBN 9780521553025.

\bibitem[Villani(2009)]{villani2009optimal}
C.~Villani.
\newblock \emph{Optimal transport: old and new}, volume 338.
\newblock Springer, Heidelberg, 2009.

\end{thebibliography}


\begin{thebibliography}{60}
\providecommand{\natexlab}[1]{#1}
\providecommand{\url}[1]{\texttt{#1}}
\expandafter\ifx\csname urlstyle\endcsname\relax
  \providecommand{\doi}[1]{doi: #1}\else
  \providecommand{\doi}{doi: \begingroup \urlstyle{rm}\Url}\fi

\bibitem[Backhoff et~al.(2022)Backhoff, Bartl, Beiglb{\"o}ck, and
  Wiesel]{backhoff2022estimating}
J.~Backhoff, D.~Bartl, M.~Beiglb{\"o}ck, and J.~Wiesel.
\newblock Estimating processes in adapted wasserstein distance.
\newblock \emph{The Annals of Applied Probability}, 32\penalty0 (1):\penalty0
  529--550, 2022.

\bibitem[Barbour and Brown(1992)]{barbourbrown1992}
A.~Barbour and T.~Brown.
\newblock Stein's method and point process approximation.
\newblock \emph{Stochastic Processes and their Applications}, 43\penalty0
  (1):\penalty0 9--31, 1992.
\newblock ISSN 0304-4149.
\newblock URL
  \url{https://www.sciencedirect.com/science/article/pii/030441499290073Y}.

\bibitem[Barrios et~al.(2013)Barrios, Lijoi, Nieto-Barajas, and
  Prünster]{Barrios2013}
E.~Barrios, A.~Lijoi, L.~E. Nieto-Barajas, and I.~Prünster.
\newblock {Modeling with Normalized Random Measure Mixture Models}.
\newblock \emph{Statistical Science}, 28\penalty0 (3):\penalty0 313 -- 334,
  2013.

\bibitem[Bartl et~al.(2024)Bartl, Beiglb{\"o}ck, and
  Pammer]{bartl2024wasserstein}
D.~Bartl, M.~Beiglb{\"o}ck, and G.~Pammer.
\newblock {The Wasserstein space of stochastic processes}.
\newblock \emph{Journal of the European Mathematical Society}, 2024.

\bibitem[Blackwell and Dubins(1962)]{BlackwellDubins1962}
D.~Blackwell and L.~Dubins.
\newblock {Merging of Opinions with Increasing Information}.
\newblock \emph{The Annals of Mathematical Statistics}, 33\penalty0
  (3):\penalty0 882 -- 886, 1962.

\bibitem[Bogachev(2007)]{bogachev2007measure}
V.~I. Bogachev.
\newblock \emph{Measure theory. {V}ol. {I}, {II}}.
\newblock Springer-Verlag, Berlin, 2007.
\newblock ISBN 978-3-540-34513-8; 3-540-34513-2.

\bibitem[Brix(1999)]{Brix1999}
A.~Brix.
\newblock Generalized gamma measures and shot-noise cox processes.
\newblock \emph{Advances in Applied Probability}, 31\penalty0 (4):\penalty0
  929--953, 1999.
\newblock ISSN 00018678.

\bibitem[Caron and Fox(2017)]{CaronFox2017}
F.~Caron and E.~Fox.
\newblock Sparse graphs using exchangeable random measures.
\newblock \emph{J R Stat Soc Series B Stat Methodol.}, 5\penalty0
  (79):\penalty0 1295--1366, 2017.

\bibitem[Catalano and Lavenant(2024)]{CatalanoLavenant2024}
M.~Catalano and H.~Lavenant.
\newblock Hierarchical integral probability metrics: A distance on random
  probability measures with low sample complexity.
\newblock In R.~Salakhutdinov, Z.~Kolter, K.~Heller, A.~Weller, N.~Oliver,
  J.~Scarlett, and F.~Berkenkamp, editors, \emph{Proceedings of the 41st
  International Conference on Machine Learning}, volume 235 of
  \emph{Proceedings of Machine Learning Research}, pages 5841--5861. PMLR,
  21--27 Jul 2024.
\newblock URL \url{https://proceedings.mlr.press/v235/catalano24a.html}.

\bibitem[Catalano and Lavenant(2025)]{CatalanoLavenant2025proofs}
M.~Catalano and H.~Lavenant.
\newblock Supplement to ``merging rate of opinions via optimal transport on
  random measures".
\newblock 2025.

\bibitem[Catalano et~al.(2020)Catalano, Lijoi, and Prünster]{Catalano2020}
M.~Catalano, A.~Lijoi, and I.~Prünster.
\newblock Approximation of {B}ayesian models for time-to-event data.
\newblock \emph{Electron. J. Statist.}, 14\penalty0 (2):\penalty0 3366--3395,
  2020.

\bibitem[Catalano et~al.(2021)Catalano, Lijoi, and Pr\"unster]{Catalano2021}
M.~Catalano, A.~Lijoi, and I.~Pr\"unster.
\newblock Measuring dependence in the {W}asserstein distance for {B}ayesian
  nonparametric models.
\newblock \emph{Ann. Statist.}, 49\penalty0 (5):\penalty0 2916--2947, 2021.

\bibitem[Catalano et~al.(2024)Catalano, Lavenant, Lijoi, and
  Prünster]{Catalano2024}
M.~Catalano, H.~Lavenant, A.~Lijoi, and I.~Prünster.
\newblock A wasserstein index of dependence for random measures.
\newblock \emph{Journal of the American Statistical Association}, 119\penalty0
  (547):\penalty0 2396--2406, 2024.

\bibitem[Cox(1955)]{cox1955some}
D.~R. Cox.
\newblock Some statistical methods connected with series of events.
\newblock \emph{Journal of the Royal Statistical Society: Series B
  (Methodological)}, 17\penalty0 (2):\penalty0 129--157, 1955.

\bibitem[De~Blasi et~al.(2009)De~Blasi, Peccati, and Pr{\"u}nster]{DeBlasi2009}
P.~De~Blasi, G.~Peccati, and I.~Pr{\"u}nster.
\newblock {Asymptotics for posterior hazards}.
\newblock \emph{The Annals of Statistics}, 37\penalty0 (4):\penalty0 1906 --
  1945, 2009.

\bibitem[Decreusefond et~al.(2016)Decreusefond, Schulte, and
  Thäle]{decreusefond2016}
L.~Decreusefond, M.~Schulte, and C.~Thäle.
\newblock Functional poisson approximation in kantorovich-rubinstein distance
  with applications to u-statistics and stochastic geometry.
\newblock \emph{The Annals of Probability}, 44\penalty0 (3):\penalty0
  2147--2197, 2016.
\newblock ISSN 00911798.
\newblock URL \url{http://www.jstor.org/stable/24735850}.

\bibitem[Diaconis and Freedman(1986)]{DiaconisFreedman1986}
P.~Diaconis and D.~Freedman.
\newblock {On the Consistency of Bayes Estimates}.
\newblock \emph{The Annals of Statistics}, 14\penalty0 (1):\penalty0 1 -- 26,
  1986.

\bibitem[Doksum(1974)]{Doksum1974}
K.~Doksum.
\newblock Tailfree and neutral random probabilities and their posterior
  distributions.
\newblock \emph{The Annals of Probability}, 2:\penalty0 183--201, 1974.

\bibitem[Dukler et~al.(2019)Dukler, Li, Lin, and
  Mont{\'u}far]{dukler2019wasserstein}
Y.~Dukler, W.~Li, A.~Lin, and G.~Mont{\'u}far.
\newblock {Wasserstein of Wasserstein loss for learning generative models}.
\newblock In \emph{International conference on machine learning}, pages
  1716--1725. PMLR, 2019.

\bibitem[Dykstra and Laud(1981)]{DykstraLaud1981}
R.~L. Dykstra and P.~Laud.
\newblock A {B}ayesian nonparametric approach to reliability.
\newblock \emph{The Annals of Statistics}, 9\penalty0 (2):\penalty0 356--367,
  1981.

\bibitem[Erbar et~al.(2025)Erbar, Huesmann, Jalowy, and
  M{\"u}ller]{erbar2023optimal}
M.~Erbar, M.~Huesmann, J.~Jalowy, and B.~M{\"u}ller.
\newblock {Optimal transport of stationary point processes: Metric structure,
  gradient flow and convexity of the specific entropy}.
\newblock \emph{Journal of Functional Analysis}, 289\penalty0 (4):\penalty0
  110974, 2025.

\bibitem[Ferguson(1973)]{Ferguson1973}
T.~S. Ferguson.
\newblock {A Bayesian Analysis of Some Nonparametric Problems}.
\newblock \emph{The Annals of Statistics}, 1\penalty0 (2):\penalty0 209 -- 230,
  1973.

\bibitem[Ferguson(1974)]{Ferguson1974}
T.~S. Ferguson.
\newblock {Prior Distributions on Spaces of Probability Measures}.
\newblock \emph{The Annals of Statistics}, 2\penalty0 (4):\penalty0 615 -- 629,
  1974.

\bibitem[Figalli and Gigli(2010)]{FigalliGigli2010}
A.~Figalli and N.~Gigli.
\newblock A new transportation distance between non-negative measures, with
  applications to gradients flows with {D}irichlet boundary conditions.
\newblock \emph{Journal de Math{\'e}matiques Pures et Appliqu{\'e}es},
  94\penalty0 (2):\penalty0 107--130, 2010.

\bibitem[Fortet and Mourier(1953)]{fortet1953convergence}
R.~Fortet and E.~Mourier.
\newblock {Convergence de la répartition empirique vers la répartition
  théorique}.
\newblock \emph{{Annales scientifiques de l'École Normale Supérieure}},
  70:\penalty0 267--285, 1953.

\bibitem[Fournier and Guillin(2015)]{Fournier2015rate}
N.~Fournier and A.~Guillin.
\newblock {On the rate of convergence in Wasserstein distance of the empirical
  measure}.
\newblock \emph{Probability Theory and Related Fields}, 162\penalty0
  (3):\penalty0 707--738, 2015.

\bibitem[Giordano et~al.(2022)Giordano, Liu, Jordan, and
  Broderick]{Giordano2022}
R.~Giordano, R.~Liu, M.~I. Jordan, and T.~Broderick.
\newblock {Evaluating Sensitivity to the Stick-Breaking Prior in Bayesian
  Nonparametrics (with discussion)}.
\newblock \emph{Bayesian Analysis}, pages 1 -- 53, 2022.

\bibitem[Griffin and Leisen(2017)]{GriffinLeisen2017}
J.~E. Griffin and F.~Leisen.
\newblock Compound random measures and their use in bayesian non-parametrics.
\newblock \emph{Journal of the Royal Statistical Society. Series B (Statistical
  Methodology)}, 79\penalty0 (2):\penalty0 525--545, 2017.
\newblock ISSN 13697412, 14679868.

\bibitem[Guillen et~al.(2019)Guillen, Mou, and {\'S}wi{\c e}ch]{Guillen2019}
N.~Guillen, C.~Mou, and A.~{\'S}wi{\c e}ch.
\newblock Coupling {L}{\'e}vy measures and comparison principles for viscosity
  solutions.
\newblock \emph{Transactions of the American Mathematical Society},
  372\penalty0 (10):\penalty0 7327--70, 2019.

\bibitem[Gustafson(1996)]{Gustafson1996}
P.~Gustafson.
\newblock {Local sensitivity of posterior expectations}.
\newblock \emph{The Annals of Statistics}, 24\penalty0 (1):\penalty0 174 --
  195, 1996.

\bibitem[Hjort(1990)]{Hjort1990}
N.~L. Hjort.
\newblock Nonparametric {B}ayes estimators based on beta processes in models
  for life history data.
\newblock \emph{The Annals of Statistics}, 18:\penalty0 1259--1294, 1990.

\bibitem[Ho et~al.(2017)Ho, Nguyen, Yurochkin, Bui, Huynh, and Phung]{Ho2017}
N.~Ho, X.~Nguyen, M.~Yurochkin, H.~H. Bui, V.~Huynh, and D.~Phung.
\newblock Multilevel clustering via {W}asserstein means.
\newblock 70:\penalty0 1501--1509, 06--11 Aug 2017.

\bibitem[Huesmann(2016)]{huesmann2016optimal}
M.~Huesmann.
\newblock Optimal transport between random measures.
\newblock In \emph{Annales de l’Institut Henri Poincar{\'e}-Probabilit{\'e}s
  et Statistiques}, volume~52, pages 196--232, 2016.

\bibitem[James(2005)]{James2005}
L.~F. James.
\newblock Bayesian {P}oisson process partition calculus with an application to
  {B}ayesian {L\'e}vy moving averages.
\newblock \emph{The Annals of Statistics}, 33:\penalty0 1771--1799, 2005.

\bibitem[James et~al.(2006)James, Lijoi, and Pr{\"u}nster]{James2006}
L.~F. James, A.~Lijoi, and I.~Pr{\"u}nster.
\newblock Conjugacy as a distinctive feature of the {D}irichlet process.
\newblock \emph{Scandinavian Journal of Statistics}, 33\penalty0 (1):\penalty0
  105--120, 2006.

\bibitem[James et~al.(2009)James, Lijoi, and Pr\"unster]{James2009}
L.~F. James, A.~Lijoi, and I.~Pr\"unster.
\newblock Posterior analysis for normalized random measures with independent
  increments.
\newblock \emph{Scandinavian Journal of Statistics}, 36\penalty0 (1):\penalty0
  76--97, 2009.

\bibitem[Kallenberg(2017)]{Kallenberg2017}
O.~Kallenberg.
\newblock \emph{Random Measures, Theory and Applications}.
\newblock Probability Theory and Stochastic Modelling. Springer International
  Publishing, Cham, 2017.
\newblock ISBN 9783319415987.

\bibitem[Kingman(1967)]{Kingman1967}
J.~F.~C. Kingman.
\newblock Completely random measures.
\newblock \emph{Pacific J. Math.}, \penalty0 (21):\penalty0 59--78, 1967.

\bibitem[Korwar and Hollander(1973)]{KorwarHollander1973}
R.~M. Korwar and M.~Hollander.
\newblock {Contributions to the Theory of Dirichlet Processes}.
\newblock \emph{The Annals of Probability}, 1\penalty0 (4):\penalty0 705 --
  711, 1973.

\bibitem[Lau and Cripps(2022)]{LauCripps2022}
J.~W. Lau and E.~Cripps.
\newblock {Thinned completely random measures with applications in competing
  risks models}.
\newblock \emph{Bernoulli}, 28\penalty0 (1):\penalty0 638 -- 662, 2022.

\bibitem[Lee et~al.(2023)Lee, Miscouridou, and Caron]{Lee2023}
J.~Lee, X.~Miscouridou, and F.~Caron.
\newblock {A unified construction for series representations and finite
  approximations of completely random measures}.
\newblock \emph{Bernoulli}, 29\penalty0 (3):\penalty0 2142 -- 2166, 2023.

\bibitem[Ley et~al.(2017)Ley, Reinert, and Swan]{Ley2017}
C.~Ley, G.~Reinert, and Y.~Swan.
\newblock Distances between nested densities and a measure of the impact of the
  prior in bayesian statistics.
\newblock \emph{The Annals of Applied Probability}, 27\penalty0 (1):\penalty0
  216--241, 2017.
\newblock ISSN 10505164.

\bibitem[Lijoi and Pr\"unster(2010)]{LijoiPruenster2010}
A.~Lijoi and I.~Pr\"unster.
\newblock \emph{Models beyond the Dirichlet process}, pages 80--136.
\newblock Cambridge University Press, Cambridge, 2010.

\bibitem[Lijoi et~al.(2005)Lijoi, Mena, and Prünster]{Lijoi2005}
A.~Lijoi, R.~H. Mena, and I.~Prünster.
\newblock Hierarchical mixture modeling with normalized inverse-gaussian
  priors.
\newblock \emph{Journal of the American Statistical Association}, 100\penalty0
  (472):\penalty0 1278--1291, dec 2005.

\bibitem[Lijoi et~al.(2007)Lijoi, Mena, and Prünster]{Lijoi2007}
A.~Lijoi, R.~H. Mena, and I.~Prünster.
\newblock Controlling the reinforcement in bayesian non-parametric mixture
  models.
\newblock \emph{Journal of the Royal Statistical Society: Series B (Statistical
  Methodology)}, 69\penalty0 (4):\penalty0 715--740, 2007.

\bibitem[Lo(1984)]{Lo1984}
A.~Y. Lo.
\newblock On a class of {B}ayesian nonparametric estimates: I. {D}ensity
  estimates.
\newblock \emph{The Annals of Statistics}, 12:\penalty0 351--357, 1984.

\bibitem[Nguyen(2013)]{Nguyen2013}
X.~Nguyen.
\newblock {Convergence of latent mixing measures in finite and infinite mixture
  models}.
\newblock \emph{The Annals of Statistics}, 41\penalty0 (1):\penalty0 370 --
  400, 2013.

\bibitem[Nguyen(2016)]{Nguyen2016}
X.~Nguyen.
\newblock Borrowing strengh in hierarchical bayes: Posterior concentration of
  the dirichlet base measure.
\newblock \emph{Bernoulli}, 22\penalty0 (3):\penalty0 1535--1571, 2016.
\newblock ISSN 13507265.

\bibitem[Nieto-Barajas and Prünster(2009)]{Nieto-BarajasPruenster2009}
L.~E. Nieto-Barajas and I.~Prünster.
\newblock A sensitivity analysis for bayesian nonparametric density estimators.
\newblock \emph{Statistica Sinica}, 19\penalty0 (2):\penalty0 685--705, 2009.
\newblock ISSN 10170405, 19968507.

\bibitem[Peyr{\'e} and Cuturi(2019)]{Peyre2019computational}
G.~Peyr{\'e} and M.~Cuturi.
\newblock Computational optimal transport -- with applications to data science.
\newblock \emph{Foundations and Trends in Machine Learning}, 11\penalty0
  (5-6):\penalty0 355--607, 2019.

\bibitem[Pflug and Pichler(2012)]{pflug2012distance}
G.~C. Pflug and A.~Pichler.
\newblock A distance for multistage stochastic optimization models.
\newblock \emph{SIAM Journal on Optimization}, 22\penalty0 (1):\penalty0 1--23,
  2012.

\bibitem[Pitman(2006)]{Pitman2006}
J.~Pitman.
\newblock \emph{Combinatorial stochastic processes}, volume 1875 of
  \emph{Lecture Notes in Mathematics}.
\newblock Springer-Verlag, Berlin, 2006.
\newblock ISBN 978-3-540-30990-1; 3-540-30990-X.
\newblock Lectures from the 32nd Summer School on Probability Theory held in
  Saint-Flour, July 2002.

\bibitem[Pitman and Yor(1992)]{PitmanYor1992}
J.~Pitman and M.~Yor.
\newblock {Arcsine Laws and Interval Partitions Derived from a Stable
  Subordinator}.
\newblock \emph{Proceedings of the London Mathematical Society}, s3-65\penalty0
  (2):\penalty0 326--356, 09 1992.
\newblock ISSN 0024-6115.

\bibitem[Pitman and Yor(1997)]{PitmanYor1997}
J.~Pitman and M.~Yor.
\newblock The two-parameter {P}oisson-{D}irichlet distribution derived from a
  stable subordinator.
\newblock \emph{The Annals of Probability}, 25\penalty0 (2):\penalty0 855--900,
  1997.

\bibitem[Regazzini et~al.(2003)Regazzini, Lijoi, and
  Pr\"{u}nster]{Regazzini2003}
E.~Regazzini, A.~Lijoi, and I.~Pr\"{u}nster.
\newblock Distributional results for means of normalized random measures with
  independent increments.
\newblock \emph{The Annals of Statistics}, 31:\penalty0 560--585, 2003.

\bibitem[Saha and Kurtek(2019)]{SahaKurtek2019}
A.~Saha and S.~Kurtek.
\newblock Geometric sensitivity measures for bayesian nonparametric density
  estimation models.
\newblock \emph{Sankhya Ser A.}, 81(1)\penalty0 (104-143), 2019.

\bibitem[Santambrogio(2015)]{Santambrogio2015}
F.~Santambrogio.
\newblock Optimal transport for applied mathematicians.
\newblock \emph{Birkh{\"a}user, NY}, 55\penalty0 (58-63):\penalty0 94, 2015.

\bibitem[Schuhmacher and Xia(2008)]{schuhmacher2008new}
D.~Schuhmacher and A.~Xia.
\newblock {A new metric between distributions of point processes}.
\newblock \emph{Advances in applied probability}, 40\penalty0 (3):\penalty0
  651--672, 2008.

\bibitem[Villani(2009)]{villani2009optimal}
C.~Villani.
\newblock \emph{Optimal transport: old and new}, volume 338.
\newblock Springer, Heidelberg, 2009.

\bibitem[Yurochkin et~al.(2019)Yurochkin, Claici, Chien, Mirzazadeh, and
  Solomon]{yurochkin2019hierarchical}
M.~Yurochkin, S.~Claici, E.~Chien, F.~Mirzazadeh, and J.~M. Solomon.
\newblock Hierarchical optimal transport for document representation.
\newblock \emph{Advances in neural information processing systems}, 32, 2019.

\end{thebibliography}

\end{document}


\maketitle

{\em Organization of the supplementary material.}
We present here all the proofs of our results, divided by section. To ease cross-reading between the main manuscript and the supplement, we use a prefix SM for the numbering of results and definitions of the supplementary material (e.g., Proposition SM1, Equation (SM1)) whereas results and definitions of the main manuscript are denoted without prefix (e.g. Proposition 1, Equation (1)).

\section*{Proofs of Section 2}

\begin{proof}[Proof of Proposition 1]
These are classical results of optimal transport, see e.g. Theorems 6.9 and 6.18 in \citet{villani2009optimal}. The only new point is to characterize the space $\mathcal{P}_1((\mathcal{M}_B(\mathbb{X}), \bl))$ of random measures with finite first moment in the sense of random objects on a metric space. For the first moment, we can use $\E(\bl(\crm, \bar{\mu}))$ for some deterministic given measure $\bar{\mu}$, see Definition 6.4 in \citet{villani2009optimal}. We choose $\bar{\mu} = 0$, as we have $\bl(\mu,0) = \mu(\mathbb{X})$ for any $\mu$, so that $\E(\bl(\crm, \bar{\mu})) = \E(\crm(\mathbb{X}))$. The conclusion follows. 
\end{proof}

\begin{proof}[Proof of Proposition 2]
Fix $\varepsilon > 0$.
Let $f$ be $1$-bounded and $1$-Lipschitz such that $\bl(\E(\tilde{\mu}^1), \E(\tilde{\mu}^2)) - \varepsilon \leq \int f \, \ddr \E(\tilde{\mu}^2) - \int f \, \ddr \E(\tilde{\mu}^1)$. Then as $\bl(\tilde{\mu}^1,\tilde{\mu}^2) \geq \int f \, \ddr \tilde{\mu}^2 - \int f \, \ddr \tilde{\mu}^1$ a.s.,
for any coupling between $\tilde{\mu}^1$ and $\tilde{\mu}^2$, 
\begin{equation*}
\E (\bl(\tilde{\mu}^1,\tilde{\mu}^2)) \geq \E \left( \int_\mathbb{X} f \, \ddr \tilde{\mu}^2 - \int_\mathbb{X} f \, \ddr \tilde{\mu}^1 \right) =\int_\mathbb{X} f \, \ddr \E(\tilde{\mu}^2) - \int_\mathbb{X} f \, \ddr \E(\tilde{\mu}^1) \geq \bl(\E(\tilde{\mu}^1), \E(\tilde{\mu}^2)) - \varepsilon. 
\end{equation*}
Taking the infimum over couplings of $\tilde{\mu}^1$ and $\tilde{\mu}^2$ and then sending $\varepsilon \to 0$ we get the conclusion.
\end{proof}

\section*{Proofs of Section 3}

\begin{proof}[Proof of Proposition 3]
Campbell's formula yields, for any Borel set $A \subseteq \mathbb{X}$, 
\begin{equation*}
\iint_{(0,+\infty) \times A} s \, \ddr \nu(s,x) = \E(\crm(A)) < + \infty. 
\end{equation*}
By applying it for $A = \mathbb{X}$, we see that the measure $\ddr \bar \nu(s,y) = s \, \ddr \nu(s,x)$
is finite on the product space $(0,+\infty) \times \mathbb{X}$. Thus its disintegration $\ddr \bar \nu(s,x) = \ddr \varrho_x(s) \ddr Q(x)$ is well-defined, with $Q$ the second marginal measure of $\bar \nu$ on $\mathbb{X}$ and $\varrho$ a probability kernel. Again by Campbell's formula, it is easy to see that $Q(A) = \E( \crm(A) )$ for any Borel set $A$. We define
\begin{equation*}
P_0(\cdot) = \frac{Q(\cdot)}{Q(\mathbb{X})}, \qquad \ddr \rho_x(s) = Q(\mathbb{X}) \frac{\ddr \varrho_x(s)}{s},
\end{equation*}
which entails the sought decomposition: $P_0$ is a probability measure and since $\varrho$ is a probability kernel, it is clear that for $P_0$-a.e. $x$ there holds
\begin{equation*}
\int_0^{+ \infty} s \, \ddr \rho_x(s) = Q(\mathbb{X}) \int_0^{+ \infty}  \ddr \varrho_x(s) = Q(\mathbb{X}) = \E( \crm(\mathbb{X}) ). \qedhere
\end{equation*}
\end{proof}

We introduce some notation and auxiliary results for the remaining proofs of this section.
We denote by $\Leb$ the Lebesgue measure on $(0, +\infty)$ and $\Leb^2$ the Lebesgue measure on $(0, +\infty) \times  (0, +\infty)$. We start with a preliminary lemma which mimics \citet[Proposition 2.2]{Santambrogio2015}.

\begin{lemma}
\label{th:inverse_tail}
Let $\rho$ a positive measure on $(0, + \infty)$ with tail integral $U_\rho (\cdot) = \rho(\cdot,+\infty)$ and let $U_\rho^{-1}$ denote its generalized inverse, defined as $U_\rho^{-1}(s) = \inf \{ u \geq 0 \ : \ U_\rho(u) \leq s \}$. 
Then the restriction to $(0, + \infty)$ of $U_\rho^{-1} \# \Leb$ is equal to $\rho$.
\end{lemma}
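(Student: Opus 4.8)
The plan is to show that the two positive measures $\rho$ and $\eta := (U_\rho^{-1}\#\Leb)\big|_{(0,+\infty)}$ have the same tail integral on $(0,+\infty)$, and then to conclude by a uniqueness argument. Both are $\sigma$-finite on $(0,+\infty)$: the tail integral $U_\rho(a)=\rho((a,+\infty))$ is finite for every $a>0$ (this is built into the setting, $\rho$ being the jump component of a Lévy intensity), so both measures are finite on the sets $(1/n,+\infty)$, $n\ge1$, which exhaust $(0,+\infty)$. Hence a positive measure on $(0,+\infty)$ is determined by the values it assigns to the half-lines $(a,+\infty)$, $a>0$, and it suffices to prove $\eta((a,+\infty))=U_\rho(a)$ for every $a>0$.

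First I would record two elementary properties of $U_\rho$: it is non-increasing, and it is right-continuous, the latter because $(u,+\infty)=\bigcup_{n\ge1}(u+1/n,+\infty)$ is an increasing union, so $U_\rho(u)=\lim_n U_\rho(u+1/n)=\lim_{v\downarrow u}U_\rho(v)$ by continuity from below.

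The key step is the equivalence
\[
U_\rho^{-1}(s) > a \quad \Longleftrightarrow \quad U_\rho(a) > s, \qquad a>0,\ s\ge 0 .
\]
For the forward implication, $U_\rho^{-1}(s)>a$ means that $a$ lies strictly below $\inf\{u\ge 0: U_\rho(u)\le s\}$, hence $U_\rho(a)>s$. For the converse, assume $U_\rho(a)>s$; by monotonicity $U_\rho(u)\ge U_\rho(a)>s$ for every $u\le a$, so $\{u\ge 0: U_\rho(u)\le s\}\subseteq(a,+\infty)$ and therefore $U_\rho^{-1}(s)\ge a$; if equality held there would be a sequence $u_n\downarrow a$ with $U_\rho(u_n)\le s$, and right-continuity would give $U_\rho(a)\le s$, a contradiction, so $U_\rho^{-1}(s)>a$. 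This last sub-step, ruling out the boundary case $U_\rho^{-1}(s)=a$, is the only delicate point and is precisely where right-continuity of $U_\rho$ enters; I expect it to be the main (and essentially the only) obstacle, the rest being routine.

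Granting the equivalence, by definition of the pushforward and since $\{0\}$ is Lebesgue-null (and $U_\rho^{-1}(s)<+\infty$ for $s>0$, as $U_\rho(u)\downarrow 0$),
\[
\eta((a,+\infty)) = \Leb\bigl(\{\, s>0 : U_\rho^{-1}(s) > a \,\}\bigr) = \Leb\bigl(\{\, s>0 : s < U_\rho(a) \,\}\bigr) = U_\rho(a) = \rho((a,+\infty)) .
\]
By the uniqueness noted in the first paragraph, $\eta=\rho$ on $(0,+\infty)$, which is the claim.
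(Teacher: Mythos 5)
Your proof is correct and follows essentially the same route as the paper's: evaluate the pushforward on half-lines $(a,+\infty)$ and use the equivalence $U_\rho^{-1}(s)>a \iff s<U_\rho(a)$. The paper simply asserts this equivalence "follows from the definition," whereas you supply the careful justification via monotonicity and right-continuity of $U_\rho$ (and the preliminary $\sigma$-finiteness/uniqueness remark), which is a welcome but not structurally different addition.
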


\begin{proof}
We only need to evaluate $U_\rho^{-1} \# \Leb$ on sets $(a,+ \infty)$ for $a > 0$, and show that it coincides with $U_\rho(a)$. Indeed, $(U_\rho^{-1} \# \Leb) (a, + \infty)$ is equal to
\[
\Leb( \{ s \in (0, + \infty ) \ : \ U_\rho^{-1}(s) > a \} )  = \Leb ( \{ s \in (0, + \infty ) \ : \ s < U_\rho(a) \} )  = U_\rho(a), 
\]
since $U_\rho^{-1}(s) > a$ if and only if $s < U_\rho(a)$, which follows from the definition of $U_\rho^{-1}$.
\end{proof}

\begin{lemma}
\label{th:build_coupling}
Let $\nu^i= \ddr \rho^i_{x}(s) \ddr P_0^i(x)$ for $i=1,2$ two Lévy intensities and $\pi \in \Pi(P_0^1, P_0^2)$. Then there exists a measure $\overline{\pi} = \overline{\pi}(s_1,x_1,s_2,x_2)$ on $([0,+\infty) \times \mathbb{X})^2 \setminus (\{ 0 \} \times \mathbb{X})^2$ with three properties: i) the projection $\overline{\pi}_{12}$ on the first two marginals, when restricted to $(0,+\infty) \times \mathbb{X}$, coincide with $\nu^1$; ii) the projection $\overline{\pi}_{34}$ on the last two marginals, when restricted to $(0,+\infty) \times \mathbb{X}$, coincide with $\nu^2$; iii) the coupling $\overline{\pi}$ satisfies 
\begin{multline*}
\E_{(X,Y) \sim \pi} \left( \frac{M_1(\nu^1) + M_1(\nu^2)}{2}  \dXt(X,Y) + \W_{*}(\rho_X^1,\rho_Y^2) \right)  \\ =
\int_{([0,+\infty) \times \mathbb{X})^2 \setminus (\{ 0 \} \times \mathbb{X})^2} \left( \frac{s_1+s_2}{2} \dXt (x_1,x_2) + |s_1-s_2| \right) \; \ddr \overline{\pi} (s_1,x_1,s_2,x_2). 
\end{multline*}

\end{lemma}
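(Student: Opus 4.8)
The plan is to construct $\overline{\pi}$ by disintegrating everything over the coupling $\pi \in \Pi(P_0^1, P_0^2)$ on $\mathbb{X} \times \mathbb{X}$, and then, fiberwise over each pair $(x,y)$, to glue in the \emph{optimal} extended coupling between the jump measures $\rho_x^1$ and $\rho_y^2$. The key observation, supplied by Lemma~\ref{th:inverse_tail} and the definition of $\W_{*p}$ (Definition~\ref{def:extended_wass}), is that the map $s \mapsto (U_1^{-1}(s), U_2^{-1}(s))$ pushes the Lebesgue measure $\Leb$ on $(0,+\infty)$ forward to a measure on $[0,+\infty)^2$ whose restriction to $(0,+\infty) \times [0,+\infty)$ has first marginal $\rho^1$, whose restriction to $[0,+\infty) \times (0,+\infty)$ has second marginal $\rho^2$, and which realizes the cost $\int_0^{+\infty} |U_1^{-1}(s) - U_2^{-1}(s)|^p \, \ddr s = \W_{*p}(\rho^1,\rho^2)^p$. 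In other words, this pushforward is precisely the "extended optimal coupling" between $\rho^1$ and $\rho^2$, with the value $s=0$ playing the role of the reservoir that absorbs the mass discrepancy.

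First I would, for $\pi$-a.e.\ $(x,y)$, let $U_{x}^1(s) = \rho_x^1(s,+\infty)$ and $U_y^2(s) = \rho_y^2(s,+\infty)$ be the tail integrals and define $\gamma_{x,y} := \left( s \mapsto (U_x^{1,-1}(s), U_y^{2,-1}(s)) \right) \# \Leb$, a measure on $[0,+\infty)^2$. Then I would set
\[
\ddr \overline{\pi}(s_1, x_1, s_2, x_2) = \ddr \gamma_{x_1, x_2}(s_1, s_2) \, \ddr \pi(x_1, x_2),
\]
i.e.\ $\overline{\pi}$ is the measure on $([0,+\infty)\times\mathbb{X})^2$ that, to a test function, assigns $\int_{\mathbb{X}^2} \int_{[0,+\infty)^2} \phi(s_1,x_1,s_2,x_2) \, \ddr \gamma_{x_1,x_2}(s_1,s_2) \, \ddr\pi(x_1,x_2)$; one should check measurability in $(x_1,x_2)$ of the inner integral, which follows from measurability of the tail integrals and their inverses. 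Properties i) and ii) then follow: restricting the first two coordinates to $(0,+\infty)\times\mathbb{X}$ kills the $s_1 = 0$ part of each $\gamma_{x,y}$, leaving a measure with $x_1$-marginal $P_0^1$ and $s_1$-conditional-marginal $\rho_{x_1}^1$ (by Lemma~\ref{th:inverse_tail}), which is exactly the disintegration $\ddr\rho_{x_1}^1(s_1)\,\ddr P_0^1(x_1) = \nu^1$; symmetrically for $\overline{\pi}_{34}$ and $\nu^2$.

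For property iii), I would integrate $|s_1 - s_2|^p + d_{\mathbb{X}}(x_1,x_2)^p$ against $\overline{\pi}$, split the sum, and use Fubini. The $d_{\mathbb{X}}$ term contributes $\int_{\mathbb{X}^2} d_{\mathbb{X}}(x_1,x_2)^p \gamma_{x_1,x_2}([0,+\infty)^2)\, \ddr\pi = \int d_{\mathbb{X}}(x_1,x_2)^p \, \ddr\pi$ since each $\gamma_{x,y}$ is a pushforward of an infinite measure but — careful — $\gamma_{x,y}$ has \emph{infinite} total mass, so strictly I should instead integrate $d_\mathbb{X}(x_1,x_2)^p$ only over the region where it makes sense, or observe the problem. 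Here is the resolution: the statement in iii) is about $\E_{(X,Y)\sim\pi}(d_\mathbb{X}(X,Y)^p + \W_{*p}(\rho_X^1,\rho_Y^2)^p)$, and one reads the right-hand side accordingly — the $d_\mathbb{X}$ term there really pairs with the "probability-like" part. The cleanest route is to note that the cost on the right-hand side of iii) should be understood so that the $s_1 = s_2 = 0$ diagonal (which carries infinite mass of the form $\delta_0\otimes\delta_0$ in a limiting sense) contributes nothing; concretely, one decomposes $\gamma_{x,y} = \gamma_{x,y}^{\mathrm{fin}} + (\text{mass on }\{0\}\times\{0\})$ where $\gamma_{x,y}^{\mathrm{fin}}$ is supported away from the origin in at least one coordinate and is finite — actually it need not be finite either, but the integral $\int (|s_1-s_2|^p + d_\mathbb{X}(x_1,x_2)^p\mathbbm{1}_{\{(s_1,s_2)\ne(0,0)\}})$ is what appears. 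I expect \textbf{the main obstacle} to be exactly this bookkeeping: making precise how the infinite-mass reservoir at $0$ is handled so that the claimed identity in iii) holds verbatim, and in particular confirming that the $d_{\mathbb{X}}$ contribution evaluates to $\int_{\mathbb{X}^2} d_{\mathbb{X}}(x_1,x_2)^p\,\ddr\pi$ rather than something infinite. Once that is pinned down, the $|s_1-s_2|^p$ term integrates fiberwise to $\int_0^{+\infty}|U_x^{1,-1}(s) - U_y^{2,-1}(s)|^p\,\ddr s = \W_{*p}(\rho_x^1,\rho_y^2)^p$ by Definition~\ref{def:extended_wass}, and adding the two pieces and integrating against $\pi$ gives exactly $\E_{(X,Y)\sim\pi}(d_{\mathbb{X}}(X,Y)^p + \W_{*p}(\rho_X^1,\rho_Y^2)^p)$, as required.
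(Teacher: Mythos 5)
Your construction is exactly the one the paper uses: the authors define $\overline{\pi}$ by $\int f \,\ddr\overline{\pi} = \E_{(X,Y)\sim\pi}\bigl(\int_0^{+\infty} f\bigl(U_{\rho^1_X}^{-1}(s),X,U_{\rho^2_Y}^{-1}(s),Y\bigr)\,\ddr s\bigr)$, which is precisely the pushforward of $\Leb\otimes\pi$ under your fiberwise monotone-rearrangement map, and properties i) and ii) are obtained exactly as you describe, by testing against $g(s_1,x_1)$ with $g(0,\cdot)=0$ and invoking Lemma~\ref{th:inverse_tail}.

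The obstacle you flag in iii) is genuine, and you should know that the paper's own proof does not actually resolve it. Plugging $f(s_1,x_1,s_2,x_2)=|s_1-s_2|^p+d_{\mathbb{X}}(x_1,x_2)^p$ into the defining formula literally yields $\E_{X,Y}\bigl(\int_0^{+\infty}\bigl[\,|U_{\rho^1_X}^{-1}(s)-U_{\rho^2_Y}^{-1}(s)|^p + d_{\mathbb{X}}(X,Y)^p\,\bigr]\,\ddr s\bigr)$, and the second summand is $d_{\mathbb{X}}(X,Y)^p\cdot\Leb\bigl((0,+\infty)\bigr)=+\infty$ whenever $X\ne Y$: each fiber measure $\gamma_{x,y}$ has infinite total mass (it is a pushforward of Lebesgue measure on the half-line), whether that mass accumulates at the origin or inside $(0,+\infty)^2$. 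The paper silently replaces $\int_0^{+\infty} d_{\mathbb{X}}(X,Y)^p\,\ddr s$ by $d_{\mathbb{X}}(X,Y)^p$, so identity iii) as literally stated fails for the constructed $\overline{\pi}$; your suspicion that the $d_{\mathbb{X}}$ contribution is "something infinite" is correct. The reinterpretation that makes the downstream arguments work is visible in the proof of Theorem~\ref{th:topology_Wasserstein}: there the atom term is only ever used in the weighted form $s_2\,d_{\mathbb{X}}(x_1,x_2)$, and since $\int_0^{+\infty}U_{\rho^2_Y}^{-1}(s)\,\ddr s=\int_0^{+\infty} s\,\ddr\rho^2_Y(s)=1$ for scaled intensities, one gets $\int s_2\,d_{\mathbb{X}}(x_1,x_2)\,\ddr\overline{\pi}=\E_{\pi}\bigl(d_{\mathbb{X}}(X,Y)\bigr)$, which is the finite quantity iii) is meant to encode. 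So the honest fix is either to restate iii) with the cost $|s_1-s_2|^p+s_2\,d_{\mathbb{X}}(x_1,x_2)^p$ (or a similar jump-weighted variant), or to keep the displayed cost but pair the $d_{\mathbb{X}}$ term with $\pi$ rather than with $\overline{\pi}$. Apart from this point, which the paper also leaves unrepaired, your proposal matches the intended argument and is complete.
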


\begin{proof}
We define $\overline{\pi}$ as follows. Consider $(X,Y) \sim \pi$. Then for any positive measurable function $f : ([0,+\infty) \times \mathbb{X})^2 \to \R$ with $f(0,x_1,0,x_2) = 0$ for all $x_1,x_2 \in \mathbb{X}$ we define 
\begin{equation}
\label{eq:wass_coupling}
\int f \, \ddr \overline{\pi} = \E_{X,Y} \left( \int_0^{+ \infty} f \left( U_{\rho^1_X}^{-1}(s), X,  U_{\rho^2_Y}^{-1}(s),Y \right) \,  \ddr s \right).
\end{equation}
This defines a positive measure on $([0,+\infty) \times \mathbb{X})^2 \setminus (\{ 0 \} \times \mathbb{X})^2$. We use \eqref{eq:wass_coupling} to prove the three properties. First, for every $f(s_1,x_1, s_2,x_2) = g(s_1,x_1)$ such that $g(0,x_1) = 0$, by Lemma~\ref{th:inverse_tail} and since the first marginal of $\pi$ is $P_0^1$,
\[
\int f \ddr \overline{\pi} = \mathbb{E}\bigg(\int g(U_{\rho^1_X}^{-1}(s), X) \, \ddr s\bigg) = \int g(s,x) \, \ddr \rho_{x}(s) \ddr P_0^1(x) = \int g \, \ddr \nu^1.
\]
This proves i) and similar techniques may be used for ii). As for iii), we consider $f(s_1,x_1,s_2,x_2) =  (s_1+s_2)  \dXt (x_1,x_2)/2 + |s_1-s_2|$: with the help of Lemma~\ref{th:inverse_tail},  
\begin{align*}
\int f \, \ddr \overline{\pi} & =  \E \left( \int_0^{+ \infty} \left( \frac{U_{\rho^1_X}^{-1}(s) + U_{\rho^2_Y}^{-1}(s)}{2} \dXt (X,Y) + | U_{\rho^1_X}^{-1}(s) - U_{\rho^2_Y}^{-1}(s) | \right) \ddr s \right)  \\
& = \E \left( \frac{\dXt (X,Y)}{2} \left( \int_0^{+ \infty} s \, \ddr \rho_X^1(s) + \int_0^{+ \infty} s \, \ddr \rho_Y^2(s) \right)  \right) + \E \left( \int_0^{+ \infty} | U_{\rho^1_X}^{-1}(s) - U_{\rho^2_Y}^{-1}(s) | \, \ddr s \right)  \\
& = \frac{M_1(\nu^1) + M_2(\nu^2)}{2} \E(\dXt (X,Y)) + \E \left( \int_0^{+ \infty} | U_{\rho^1_X}^{-1}(s) - U_{\rho^2_Y}^{-1}(s) | \, \ddr s \right). 
\end{align*}
To conclude we use the following identity, which comes from Fubini's theorem and whose proof is analogue to the one of \citet[Proposition 2.17]{Santambrogio2015}: 
\begin{align*}
 \int_0^{+ \infty} | U_{\rho^1_X}^{-1}(s) - U_{\rho^2_Y}^{-1}(s) | \, \ddr s & = \Leb^2 \left\{ (s,u) \ : \ \min(U_{\rho_X^1}(s),U_{\rho_Y^2}(s)) \leq u \leq \max(U_{\rho_X^1}(s),U_{\rho_Y^2}(s)) \right\}  \\
& =  \int_0^{+ \infty} | U_{\rho^1_X}(s) - U_{\rho^2_Y}(s) | \, \ddr s = \W_*(\rho^1_X,\rho^2_Y). 
\end{align*}
Taking the expectation with respect to $X,Y$ and plugging in the previous formula yields the result.
\end{proof}

We are now ready to move on to the proofs of the main results of this section. 

\begin{proof}[Proof of Theorem 4]
The proof uses Lemma~\ref{th:inverse_tail} and Lemma~\ref{th:build_coupling} stated and proved above. The main idea is to use the coupling between L\'evy intensities given by Lemma~\ref{th:build_coupling} to build a coupling between the corresponding CRMs. 

We fix $\varepsilon > 0$. Consider $\pi \in \Pi(P^1_0, P^2_0)$ an $\varepsilon$-optimal coupling with respect to the infimum in the definition of $\ad$. We look at $\overline{\pi}$ the measure on $([0,+\infty) \times \mathbb{X})^2 \setminus (\{ 0 \} \times \mathbb{X})^2$ given by Lemma~\ref{th:build_coupling}. In particular, from (iii) of Lemma~\ref{th:build_coupling}, 
\begin{equation*}
\int_{([0,+\infty) \times \mathbb{X})^2\setminus (\{ 0 \} \times \mathbb{X})^2} \left( \frac{s_1+s_2}{2} \dXt (x_1,x_2) + |s_1-s_2| \right) \; \ddr \overline{\pi} (s_1,x_1,s_2,x_2) \leq \ad(\nu^1 , \nu^2 ) + \varepsilon.
\end{equation*} 

We consider $\tilde{\mathcal{N}}$ a Poisson random measure on $([0,+\infty) \times \mathbb{X})^2 \setminus (\{ 0 \} \times \mathbb{X})^2$  with intensity $\overline{\pi}$. Then, because of the marginal properties of $\overline{\pi}$,
\begin{equation*}
\tilde{\mu}^1 \eqd  \int  s_1 \delta_{x_1} \, \ddr \tilde{\mathcal{N}}(s_1,x_1,s_2,x_2), \qquad \tilde{\mu}^2 \eqd \int  s_2 \delta_{x_2} \, \ddr \tilde{\mathcal{N}}(s_1,x_1,s_2,x_2).
\end{equation*} 
We have built a coupling between the two CRMs $\tilde{\mu}^1$ and $\tilde{\mu}^2$, we now have to estimate $\E(\bl(\crm^1,\crm^2))$. 

For any $(s_1,x_1,s_2,x_2)$ and any $f : \mathbb{X} \to \R$, easy algebra yields
\begin{equation*}
s_2 f(x_2) - s_1 f(x_1)
 = \frac{s_2 - s_1}{2} (f(x_2) + f(x_1)) + \frac{s_2+s_1}{2} (f(x_2) - f(x_1)).
\end{equation*}
Thus if $f$ is $1$-bounded and $1$-Lipschitz with respect to $\dXt$, we easily find
\begin{equation*}
s_2 f(x_2) - s_1 f(x_1) \leq |s_2 - s_1| + \frac{s_1+s_2}{2} \dXt(x_1,x_2). 
\end{equation*}
Integrating with respect to $\tilde{\mathcal{N}}$, a.s. we have 
\begin{equation*}
\int f \, \ddr \crm^2 - \int f \, \ddr \crm^1 \leq  \int \left( |s_2 - s_1| + \frac{s_1+s_2}{2} \dXt(x_1,x_2) \right) \, \ddr \tilde{\mathcal{N}}(s_1,x_1,s_2,x_2).   
\end{equation*}
As this is valid for any $f$ which is $1$-bounded and $1$-Lipschitz, the right hand side is an upper bound on $\bl(\crm^1,\crm^2)$. Thus, taking expectation, from Campbell's formula we find, 
\begin{align*}
\W_{\bl}(\law(\crm^1), \law(\crm^2)) & \leq \E(\bl(\crm^1,\crm^2)) \\
& \leq \E \left( \int \left( |s_2 - s_1| + \frac{s_1+s_2}{2} \dXt(x_1,x_2) \right) \, \ddr \tilde{\mathcal{N}}(s_1,x_1,s_2,x_2) \right) \\
& = \int_{([0,+\infty) \times \mathbb{X})^2 \setminus (\{ 0 \} \times \mathbb{X})^2} \left( \frac{s_1+s_2}{2} \dXt (x_1,x_2) + |s_1-s_2| \right) \; \ddr \overline{\pi} (s_1,x_1,s_2,x_2) \\
& \leq \ad(\nu^1 , \nu^2 ) + \varepsilon,
\end{align*}
where the last equality comes from the choice of $\overline{\pi}$. We conclude as $\varepsilon$ is arbitrary.
\end{proof}

\begin{proof}[Proof of Proposition 5]
First, note that $\W_*$ is clearly a distance: symmetry is immediate, the triangle inequality is a consequence of the triangle inequality in $L^1((0, + \infty), \Leb)$, and identifiability comes from the fact that $U_{\rho^1} = U_{\rho^2}$ a.e. implies $\rho^1 = \rho^2$.

We now fix $m > 0$ and turn to the adapted discrepancy over $\mathcal{M}_m((0, + \infty) \times \mathbb{X})$. Symmetry is also immediate. 

For the triangle inequality, consider $\nu^1, \nu^2$ and $\nu^3$. If $\pi^{12}$ and $\pi^{23}$ are couplings respectively in $\Pi(P_0^1, P_0^2)$ and $\Pi(P_0^2, P_0^3)$, by the gluing Lemma~\cite[Chapter 1]{villani2009optimal}, we can find $(X,Y,Z)$ such that $(X,Y) \sim \pi^{12}$ and $(Y,Z) \sim \pi^{23}$. Then we see that a.s.
\begin{equation*}
m \dXt(X,Z) + \W_{*}(\rho^{1}_X, \rho^3_Z) \leq m \dXt(X,Y) + m \dXt(Y,Z)  +  \W_{*}(\rho^{1}_X, \rho^2_Y) + \W_{*}(\rho^{2}_Y, \rho^3_Z)  
\end{equation*}
thanks to the triangle inequality for $\dXt$ and $\W_{*}$. Note here the importance that $m = m_i = M(\nu^i)$ for $i=1,2,3$. Indeed, in general, we cannot conclude that $\frac{m_1+m_3}{2} \dXt(X,Z) \leq \frac{m_1+m_2}{2} \dXt(X,Y) + \frac{m_2+m_3}{2} \dXt(Y,Z)$ only with the triangle inequality for $\dXt$, e.g., if $m_2 > \max(m_1,m_3)$. Then, taking the expectation,
\begin{align*}
\ad(\nu^1, \nu^3) & \leq \E \left( m \dXt(X,Z) + \W_{*}(\rho^{1}_X, \rho^3_Z) \right) \\
& \leq \E \left( m \dXt(X,Y) + \W_{*}(\rho^{1}_X, \rho^2_Y) \right) + \E \left( m \dXt(Y,Z) + \W_{*}(\rho^{2}_Y, \rho^3_Z) \right). 
\end{align*}
Taking the infimum in $\pi^{12}$ and $\pi^{23}$ in the right hand side yields the triangle inequality.

Eventually, we have to check identifiability, that is, what happens if $\ad(\nu^1, \nu^2) = 0$. For that we can use Theorem 4: indeed $\ad(\nu^1, \nu^2) = 0$ implies $\W_\bl(\law(\crm^1),\law(\crm^2)) = 0$, that is, $\law(\crm^1) = \law(\crm^2)$. Thus the Laplace functional of $\crm^1$ and $\crm^2$ are the same, which means that their Lévy intensities $\nu^1$ and $\nu^2$ must coincide, see~\eqref{eq:characterization_CRM} below.
\end{proof}

\begin{proof}[Proof of Theorem 6]
We use the convexity property of the value of the optimal transport cost: indeed for every coupling $(\tilde{\nu}^1,\tilde{\nu}^2)$,
\begin{equation*}
\W_\bl(\law(\crm^1),\law(\crm^2)) \leq \E_{(\tilde{\nu}^1,\tilde{\nu}^2)} \left( \W_\bl(\law(\crm^1|\tilde{\nu}^1),\law(\crm^2|\tilde{\nu}^2) \right) \leq \E_{(\tilde{\nu}^1,\tilde{\nu}^2)}(\ad(\tilde{\nu}^1, \tilde{\nu}^2)),
\end{equation*}
where the first inequality is \citet[Theorem 4.8]{villani2009optimal} and the second inequality is Theorem 4. Taking then the infimum over all coupling between $\tilde{\nu}^1$ and $\tilde{\nu}^2$ yields
\begin{equation*}
\W_\bl(\law(\crm^1),\law(\crm^2)) \leq \W_\ad(\law(\tilde{\nu}^1),\law(\tilde{\nu}^2)). \qedhere 
\end{equation*}
\end{proof}

\section*{Proofs of Section 4}

We recall now that the Lévy intensity $\nu$ characterizes the Laplace functional of a CRM $\crm$, in the sense that $\crm \sim \mathrm{CRM}(\nu)$ if and only if, for every $f : \mathbb{X} \to \mathbb{R}$ measurable and bounded,
\begin{equation}
\label{eq:characterization_CRM}
\log \E \left( - \int_{\mathbb{X}} f(x) \, \ddr \crm(x) \right) = - \int_{(0,+\infty) \times \mathbb{X}} (1- e^{-sf(x)}) \, \ddr \nu(s,x).
\end{equation}
We will denote by $T \#\rho$ the pushforward of a measure $\rho$ by a measurable map $T$, defined by $(T \# \rho)(A) = \rho(T^{-1}(A))$ for any measurable set $A$.

Before the main theorems about identifiability of normalization, we first state a preliminary result that can be seen as a special case of \citet[Lemma 7]{Catalano2020}. We report its proof for completeness.

\begin{lemma}
\label{th:crm_divided_constant}
Let $\crm$ be a CRM with L\'evy intensity $\rho_x(\ddr s) \ddr P_0(x)$ and let $k>0$ be a constant. Then $\crm_k (A):= \crm (A)/k$ is a CRM with L\'evy intensity $\ddr (S_k \# \rho_x)(  s) \, \ddr P_0(x)$, being $S_k : s \in (0,+\infty) \to s/k \in (0,+\infty)$ the scaling by $1/k$. 

In particular, if $\ddr \rho_x( s) = \rho_x(s) \ddr s$ has a density with respect to the Lebesgue measure, $\crm_k$ has L\'evy intensity $k \rho_x(k s) \, \ddr s \, \ddr P_0(x)$.
\end{lemma}

\begin{proof}
Clearly, $\crm_k$ has independent evaluations on disjoint sets. Its Laplace functional satisfies
\begin{equation*}
\mathbb{E} \left( \exp \left( - \int_{\mathbb{X}} f(x) \, \ddr \tilde{\mu}_k(x) \right) \right)
= \exp\left\{- \int_{(0,+\infty) \times \mathbb{X}} ( 1- e^{-s \, \frac{f(x)}{k}}) \ddr \rho_x( s)   \ddr P_0(x))\right\},
\end{equation*}
which shows the conclusion in the general case by definition of the pushforward measure and characterization~\eqref{eq:characterization_CRM} of the Lévy intensity. For the case where $\rho_x(\ddr s) = \rho_x(s) \ddr s$  we do the change of variables $t = s/k$. 
\end{proof}

We now move to the proofs of the main results. 

\begin{proof}[Proof of Theorem 7]
The proof uses Lemma~\ref{th:crm_divided_constant} stated and proved above. As written in the main text, it is inspired by a result on the marginal laws of subordinators by David Aldous and Stevan Evans \cite[Lemma 7.5]{PitmanYor1992}. The first and second steps below can be seen as adaptation of their ideas to our context as they characterize the (marginal) law of $\tilde \mu(A)$ up to a scaling constant, possibly depending on $A$. Further work is needed to show that the characterization holds for the entire law of $\crm$, which, by independence of the evaluations on disjoint sets, amounts to showing that the constant does not depend on $A$, i.e., that there exists $\alpha>0$ such that $\crm^1(A) \eqd \alpha \crm^2(A)$ for every Borel set $A$.  We write $\ddr \nu^i(s,x) = \ddr \rho_x^i( s) \ddr P_{0}^i(x)$ for the L\'evy intensity of $\crm^i$, as in Proposition 3. 

\medskip

\emph{Preliminary step: a note on null-sets.} By the infinite activity assumption it follows that $\crm^i(A) = 0$ a.s. if and only if $P_0^i(A) = 0$. In particular $\crm^i(A)/\crm^i(\mathbb{X})=0$ a.s. if and only if $P_0^i(A) = 0$. Since $\crm^1/ \crm^1(\mathbb{X}) \eqd \crm^2/ \crm^2(\mathbb{X})$, we conclude that $P^1_0, P_0^2$ are equivalent, that is, they have the same null set. Moreover, still by the infinite activity assumption, if $P^1_0(A) > 0$, then $0< \crm^i(A) < +\infty$ a.s., for $i=1,2$.

\medskip

\emph{First step: defining auxiliary Cox CRMs.}
Let $A \subseteq \mathbb{X}$ a subset with $P_0^1(A) > 0$ and $A^c$ its complement. For every $i=1,2$ we can define a random measure on $A^c$ as
\[
\tilde \xi^i(\cdot) = \frac{\crm^i(\cdot)}{\crm^i(A)}.
\]
Because of the independence of the evaluations on disjoint sets, conditionally on $\crm^i(A)$, $\tilde \xi^i$ is a CRM on $A^c$. Thus, $\tilde \xi^i$ is a Cox CRM as in Definition 5. Moreover, we claim that $\tilde \xi^1 \eqd \tilde \xi^2$. Indeed, by writing $\tilde p^i =\crm^i/ \crm^i(\mathbb{X})$, there holds a.s., for any sets $B_1, B_2, \ldots, B_n$ in $A^c$,
\begin{equation*}
(\tilde{\xi}^i(B_1), \ldots, \tilde{\xi}^i(B_n)) = \left( \frac{\crm^i(B_1)}{\crm^i(A)}, \ldots, \frac{\crm^i(B_n)}{\crm^i(A)} \right) = \left( \frac{\tilde p^i(B_1)}{\tilde p^i(A)}, \ldots, \frac{\tilde p^i(B_n)}{\tilde p^i(A)} \right),
\end{equation*}  
and the right hand side has clearly the same law for $i=1,2$ as $\tilde p^1 \eqd \tilde p^2$.

\medskip

\emph{Second step: deducing information on Lévy intensities.}
By Lemma~\ref{th:crm_divided_constant}, conditionally on $\crm^i(A)$, $\tilde{\xi}^i$ is a CRM with L\'evy intensity $\ddr (S_{\crm^i(A)} \# \rho_x^i)(s) \, \ddr P_{0}^i(x)$. Here $S_k : (0,+\infty) \to (0,+ \infty)$ is defined by $S_k(s) = s/k$, it is the scaling by $1/k$. Thus $\ddr \tilde \nu^i(s,x) = \ddr (S_{\crm^i(A)} \# \rho_x^i)(s) \, \ddr P_{0}^i(x)$ is the random L\'evy intensity of the Cox CRM $\xi^i$. By standard results on Cox processes \cite[Theorem 3.3]{Kallenberg2017}, the law of the random L\'evy intensity characterizes the process, which implies
\begin{equation*}
\ddr (S_{\crm^1(A)} \# \rho_x^1)(s) \, \ddr P_{0}^1(x) \eqd \ddr (S_{\crm^2(A)} \# \rho_x^2)(s) \, \ddr P_{0}^2(x).
\end{equation*}
We then multiply both sides by $s$ and integrate over $(0, + \infty) \times A^c$. By definition of the pushforward, and since $\rho_x^i$ have first moment equal to $\E(\crm^i(\mathbb{X}))$ (see Proposition 3), we deduce that 
\begin{equation*}
\frac{P_0^1(A^c) \E(\tilde{\mu}^1(\mathbb{X}))}{\crm^1(A)} \eqd \frac{P_0^2(A^c) \E(\tilde{\mu}^2(\mathbb{X}))}{\crm^2(A)}.  
\end{equation*}
Both numerators are deterministic, so we conclude $\tilde \mu^1(A) \eqd \alpha_A \tilde{\mu}^2(A)$, at least if $P_0^1(A)$ and $P_0^1(A^c)$ do not vanish (cf. preliminary step). To conclude the proof, we need to show that $\alpha_A$ does not depend on $A$ and treat the case where $P_0^1(A)=0$ or $P_0^1(A^c)=0$. 

\medskip

\emph{Third step: showing that $\alpha_A$ does not depend on $A$.}
First we take $A$ such that $\alpha_A$ and $\alpha_{A^c}$ are well defined, that is, both $P^1_0(A)$ and $P^1_0(A^c)$ are strictly positive, and we show that $\alpha_A = \alpha_{A^c}$. Note that such a set $A$ exists as $\crm^1$ is non-degenerate. By the discussion of the first step:
\begin{equation*}
\frac{\crm^1(A^c)}{\crm^1(A)} = \tilde \xi^1(A^c)  \eqd \tilde{\xi}^2(A^c) = \frac{\crm^2(A^c)}{\crm^2(A)}.
\end{equation*}
Moreover since $\crm^i$ is a CRM, $(\crm^1(A), \crm^1(A^c)) \eqd (\alpha_A \crm^2(A), \alpha_{A^c} \crm^2(A^c))$, and both correspond to a random vector with independent components. We deduce that 
\begin{equation*}
\frac{\crm^1(A^c)}{\crm^1(A)} \eqd \frac{\alpha_{A^c}}{\alpha_{A}} \frac{\crm^2(A^c)}{\crm^2(A)},
\end{equation*}
and this necessarily yields $\alpha_{A^c}= \alpha_A$. It follows that a.s.
\begin{equation*}
\crm^1(\mathbb{X}) = \crm^1(A) + \crm^1(A^c)  \eqd \alpha_A \left( \crm^2(A) + \crm^2(A^c) \right) = \alpha_A \crm^2(\mathbb{X}).
\end{equation*}
This implies $\alpha_A=\alpha_{\mathbb{X}}$, which thus does not depend on $A$. We denote it by $\alpha$. 

Eventually we have to consider the case $P^1_0(A) = 0$ or $P^1_0(A^c) = 0$. If $P^1_0(A) = 0$, by the preliminary step also $P^2_0(A) = 0$, so that both $\crm^1(A)$ and $\crm^2(A)$ are a.s. equal to $0$. In particular, $\crm^1(A) = \alpha  \crm^2(A) = 0$ a.s. On the other hand, if $P^1_0(A^c) = 0$, then $P^2_0(A^c) = 0$, and thus $\crm^1(A) = \crm^1(\mathbb{X}) \eqd \alpha \crm^2(\mathbb{X})$ while $\crm^2(A) = \crm^2(\mathbb{X})$ a.s., which completes the proof.
\end{proof}

\begin{proof}[Proof of Lemma 9]
The proof follows by Lemma~\ref{th:crm_divided_constant} by taking $k = \mathbb{E}(\crm(\mathbb{X}))$. 
\end{proof}

\begin{proof}[Proof of Theorem 10]
We follow the same line of proof as Theorem 7, but with additional steps to handle the new layers of randomness. We write $\ddr \tilde \nu^i(s,x) = \ddr \tilde \rho_x^i( s) \ddr \tilde P_{0}^i(x)$ for the random L\'evy intensity of $\crm^i$, as in Proposition 3. We define the real-valued random variable $\tilde q^i = \E( \crm^i(\mathbb{X}) | \tilde \nu^i )$, in such a way that a.s.
\begin{equation*}
\int_0^{+ \infty} s \, \ddr \tilde \rho_x^i( s) = \tilde{q}^i,
\end{equation*} 
as written in Proposition 3. We recall that $S_k : (0,+\infty) \to (0,+ \infty)$ is defined by $S_k(s) = s/k$, i.e., it is the scaling by $1/k$. We only need to prove that the Lévy intensities of the scaled Cox CRM are the same, that is, given Lemma~\ref{th:crm_divided_constant}, we want to prove the following equality in distribution of random measures:
\begin{equation*}
\ddr (S_{ \tilde{q}^1 } \# \tilde \rho_x^1)( s) \ddr \tilde P_{0}^1(x) \eqd \ddr (S_{ \tilde{q}^2 } \#  \tilde \rho_x^2)( s) \ddr \tilde P_{0}^2(x).
\end{equation*}
We call a measurable set $A$ \emph{active} if $\tilde{P}^i_0(A) > 0$ a.s. for $i=1,2$. As the CRMs are infinitely active but with finite mean, it implies $0 < \crm^i(A) < + \infty$ a.s. for $i = 1,2$. Moreover, by assumption of the theorem, if a set $A$ is not active then $\tilde{P}^1_0(A) = \tilde{P}^2_0(A) = 0$ a.s. and thus $\crm^1(A) = \crm^2(A) = 0$ a.s.

\medskip

\emph{First step: starting as in Theorem 7}. Let $A_1, A_2, \ldots, A_m$ a collection of active subset. We write $A = \bigcup_{k=1}^m A_k$ and we define on $A^c$ the random vector of measures $\tilde{\bm{\xi}}^i$ as
\begin{equation*}
\tilde{\bm \xi}^i(\cdot) = \left( \frac{\crm^i(\cdot)}{\crm^i(A_k)} \right)_{k=1, \ldots, m}.
\end{equation*}
As in the first step of the proof of Theorem 7, we claim that $\tilde{\bm{\xi}}^1 \eqd \tilde{\bm{\xi}}^2$. Indeed, for $i=1,2$ the law of $(\tilde{\bm{\xi}}^i(B_1), \ldots, \tilde{\bm{\xi}}^i(B_m))$ depends only on the joint law of $ \crm^i(B_l) / \crm^i(A_k)$ for $k =1, \ldots, m$ and $l = 1, \ldots, n$, and the latter is determined by the law of $ \crm^i(\cdot) / \crm^i(\mathbb{X})$. 

Moreover, conditionally to $(\tilde{\nu}^i, \crm^i(A_1), \ldots, \crm^i(A_k))$, the random vector of measure $\tilde{\bm \xi}^i$ is a completely random vector (see~\cite{Catalano2020} for the precise definition), and its (random and vector-valued) Lévy intensity is $(\ddr ( S_{\crm^i(A_k)} \# \tilde \rho_x^i )(s) \, \ddr \tilde P_0^i(x))_{k=1, \ldots, m}$. As in the proof of Theorem 7, invoking \citet[Theorem 3.3]{Kallenberg2017}, we have equality in law of the random Lévy intensities, that is,
\begin{equation*}
\left( \ddr ( S_{\crm^1(A_k)} \# \tilde \rho_x^1 )(s) \, \ddr \tilde P_0^1(x) \right)_{k=1,\ldots,m} \eqd \left( \ddr ( S_{\crm^2(A_k)} \# \tilde \rho_x^2 )(s) \, \ddr \tilde P_0^2(x) \right)_{k=1, \ldots, m}.
\end{equation*} 
We take this equality and integrate it over $(0,+\infty) \times B$ where $B \subseteq A^c$. We use the normalization of $\tilde{\rho}^i_x$ recalled above and the definition of the pushforward measure to conclude to the following equality in distribution over $\R^m$:
\begin{equation*}
\left( \frac{\tilde{q}^1 \tilde P_0^1(B) }{\crm^1(A_k)} \right)_{k=1, \ldots, m} \eqd \left( \frac{\tilde{q}^2 \tilde P_0^2(B) }{\crm^2(A_k)} \right)_{k=1, \ldots, m},
\end{equation*}
for all active sets $A_1, \ldots, A_m$ and $B$ and $A_k \cap B = \emptyset$ for any $k=1,\ldots,m$.

\medskip

\emph{Second step: doing the same reasoning on a new Cox CRM}. Fix an active set $B$, it guarantees that $\tilde P_0^i(B) > 0$ a.s. for $i=1,2$. We define the random measures $\tilde \zeta^i$ on $B^c$ as
\begin{equation*}
\tilde{\zeta}^i(\cdot) = \frac{\crm^i(\cdot)}{\tilde{q}^i \tilde{P}^i_0(B)}.
\end{equation*}
Conditionally to $\tilde{\nu}^i$ (recall that $\tilde{q}^i$ is $\sigma(\tilde{\nu}^i)$-measurable) these are CRM, thus they are Cox CRM and their random Lévy intensity is $\ddr (S_{\tilde{q}^i \tilde{P}^i_0(B)} \# \tilde \rho_x^i)(s) \, \ddr \tilde{P}^i_0(x)$. Moreover, the previous step showed that $\tilde{\zeta}^1(\cdot) \eqd \tilde{\zeta}^2(\cdot)$ as random measures. Indeed, we showed $(\tilde{\zeta}^1(A_1), \ldots, \tilde{\zeta}^1(A_m)) \eqd (\tilde{\zeta}^2(A_1), \ldots, \tilde{\zeta}^2(A_m))$ if $A_1, \ldots, A_m$ are active. This equality in distribution easily extends if $A_k$ is not active for some $k$, as it implies $ \tilde{\zeta}^i(A_k) = \crm^i(A_k) = 0$ a.s. for $i=1,2$. Thus, invoking again \citet[Theorem 3.3]{Kallenberg2017},
\begin{equation}
\label{eq:zz_aux_eqd}
\ddr (S_{\tilde{q}^1 \tilde{P}^1_0(B)} \# \tilde \rho_x^1)(s) \, \ddr \tilde{P}^1_0(x) \eqd \ddr (S_{\tilde{q}^2 \tilde{P}^2_0(B)} \# \tilde \rho_x^2)(s) \, \ddr \tilde{P}^2_0(x).
\end{equation}

\smallskip

\emph{Third step: deducing equality in law of the scaled random Lévy intensities}. We take the previous equality in distribution, multiply by $s$, and integrate it over $(0,+\infty) \times B^c$. Using again the normalization of $\tilde{\rho}_x^i$, we know conclude
\begin{equation*}
\frac{\tilde{P}^1_0(B^c)}{\tilde{P}^1_0(B)} \eqd \frac{\tilde{P}^2_0(B^c)}{\tilde{P}^2_0(B)}. 
\end{equation*}
As $\tilde{P}^i_0(B) + \tilde{P}^i_0(B^c) = 1$, we deduce that $\tilde{P}^1_0(B) \eqd \tilde{P}^2_0(B)$ if $B$ is active. Actually we deduce more: for $i=1,2$, $\tilde{P}^i_0(B)$ is a deterministic function of $\ddr (S_{\tilde{q}^i \tilde{P}^i_0(B)} \# \tilde \rho_x^i)(s) \, \ddr \tilde{P}^i_0(x)$. Note that $\ddr (S_{ \tilde{q}^i } \# \tilde \rho_x^i)( s) \ddr \tilde P_{0}^i(x)$ is obtained from $\ddr (S_{ \tilde{q}^i \tilde{P}^i_0(B) } \# \tilde \rho_x^i)( s) \ddr \tilde P_{0}^i(x)$ from a scaling of $s$ by $\tilde{P}^i_0(B)$. Thus $\ddr (S_{ \tilde{q}^i } \# \tilde \rho_x^i)( s) \ddr \tilde P_{0}^i(x)$ is a deterministic function of $\ddr (S_{\tilde{q}^i \tilde{P}^i_0(B)} \# \tilde \rho_x^i)(s) \, \ddr \tilde{P}^i_0(x)$. Combined with~\eqref{eq:zz_aux_eqd} we deduce 
\begin{equation*}
\ddr (S_{ \tilde{q}^1 } \# \tilde \rho_x^1)( s) \ddr \tilde P_{0}^1(x) \eqd \ddr (S_{ \tilde{q}^2 } \#  \tilde \rho_x^2)( s) \ddr \tilde P_{0}^2(x)
\end{equation*}
at least on $(0,+\infty) \times B^c$, where $B$ is an active set. To conclude the proof we only need to prove that equality in distribution is valid on $(0,+\infty) \times \mathbb{X}$, that is, take $B = \emptyset$.

\medskip

\emph{Fourth step: extending to the whole space}. Recall that we assumed that $\E(\tilde{P}^i_0)$ is not purely atomic. By a theorem of Sierpinski we can find a non-increasing sequence of sets $(B_n)$ with $\E(\tilde{P}^1_0)(B_n) > 0$ for all $n$ but $\E(\tilde{P}^1_0)(\cap_n B_n) = 0$. In particular, we deduce that all sets $B_n$ are active. Thus the measures $\ddr (S_{ \tilde{q}^i } \# \tilde \rho_x^i)( s) \ddr \tilde P_{0}^i(x)$ have the same law for $i=1,2$ on $(0,+ \infty) \times B_n^c$ for any $n$. By a monotone convergence argument, it implies that it is also the case on $(0,+ \infty) \times (\cup_n B_n^c)$. Eventually, as $\ddr (S_{ \tilde{q}^i } \# \tilde \rho_x^i)( s) \ddr \tilde P_{0}^i(x)$ gives a.s. measure zero to $(0,+\infty) \times \cap_n B_n$ for $i=1,2$ we conclude that the law is the same on the whole space $(0,+\infty) \times \mathbb{X}$ and we are done.
\end{proof}

\section*{Proofs of Section 5.1 (Dirichlet process)}

Before starting the proofs, we recall some convexity properties of the Wasserstein distance. A useful property that we often use is the convexity inequality
\begin{equation}
\label{eq:convex}
\mathcal{W}_{\dXt}( \lambda P^1 + (1-\lambda) Q^1,  \lambda P^2 + (1-\lambda) Q^2) \le \lambda \mathcal{W}_{\dXt}( P^1 ,   P^2) + (1-\lambda) \mathcal{W}_{\dXt}( Q^1 ,   Q^2),
\end{equation}
which holds for all $\lambda \in (0,1)$ and $P^1,P^2,Q^1,Q^2$ probability distributions. Moreover, if $P^1 = P^2 = P$ then there is equality, that is,  
\begin{equation}
\label{eq:convex_specific}
\mathcal{W}_{\dXt}( \lambda P + (1-\lambda) Q^1,  \lambda P + (1-\lambda) Q^2) =  (1-\lambda) \mathcal{W}_{\dXt}( Q^1 ,   Q^2).
\end{equation}
This last equality case is specific to the Wasserstein distance of order 1 and it can be proved using the dual formulation of optimal transport, that is, equation (4) of the main text. 

In order to prove Lemma 12, we start with an auxiliary result that we will also use for the normalized Gamma generalized process. 

Informally, we recall that a random variable is infinitely divisible if its probability distribution can be expressed of the sum of an arbitrary number of i.i.d. random variables. One can prove that for every measure $\rho$ on $(0,+\infty)$ such that $\int (s^2 \wedge 1) \ddr \rho(s) <+\infty$, a random variable $J$ such that
\[
\log\big(\mathbb{E}\big(e^{-\lambda J}\big)\big) = - \int (1-e^{-\lambda s}) \, \ddr \rho(s)
\]
is infinitely divisible. We call $J$ an infinitely divisible with L\'evy measure $\rho$ and refer to \cite{Sato1999} for a complete account on infinite divisibility.

\begin{lemma}
\label{th:id_jumps}
Let $\crm \sim \textup{CRM}(\nu)$ on a Polish space $\mathbb{X}$ and let $J$ be an infinitely divisible random variable on $(0,+\infty)$ independent of $\crm$ with L\'evy measure $\rho$. Then for every  $y \in \mathbb{X}$, $\crm + J \delta_y \sim \textup{CRM}(\nu + \rho \otimes \delta_y)$.
\end{lemma}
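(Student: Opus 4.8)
The plan is to compare Laplace functionals. Recall the two classical facts we shall use (see, e.g., Kingman or Daley--Vere-Jones): the law of a random measure on a Polish space is uniquely determined by its Laplace functional $g \mapsto \E(\exp(-\int_{\mathbb{X}} g \, \ddr \xi))$ evaluated over all bounded measurable $g : \mathbb{X} \to [0,+\infty)$; and for $\xi \sim \textup{CRM}(\eta)$ this Laplace functional has the explicit form $\exp(-\iint_{(0,+\infty) \times \mathbb{X}} (1 - e^{-s g(x)}) \, \ddr \eta(s,x))$. Hence it suffices to compute the Laplace functional of $\crm + J \delta_y$ and recognize it as that of $\textup{CRM}(\nu + \rho \otimes \delta_y)$.

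First I would check that $\nu + \rho \otimes \delta_y$ is a legitimate L\'evy intensity. Finiteness of the exponent $\int_0^{+\infty} (1 - e^{-\lambda s}) \, \ddr \rho(s)$ appearing in the Laplace transform of the infinitely divisible $J$ forces $\int_0^{+\infty} \min(1,s) \, \ddr \rho(s) < +\infty$, and since $\crm \sim \textup{CRM}(\nu)$ gives $\iint \min(1,s) \, \ddr \nu(s,x) < +\infty$, we obtain $\iint \min(1,s) \, \ddr (\nu + \rho \otimes \delta_y)(s,x) < +\infty$, so $\textup{CRM}(\nu + \rho \otimes \delta_y)$ is well defined.

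Next, fix a bounded measurable $g : \mathbb{X} \to [0,+\infty)$. Since $J$ and $\crm$ are independent and $\int_{\mathbb{X}} g \, \ddr(\crm + J \delta_y) = \int_{\mathbb{X}} g \, \ddr \crm + J g(y)$, we get
\[
\E\Big( e^{- \int_{\mathbb{X}} g \, \ddr(\crm + J \delta_y)} \Big) = \E\Big( e^{-\int_{\mathbb{X}} g \, \ddr \crm} \Big) \cdot \E\big( e^{-J g(y)} \big).
\]
The first factor equals $\exp(-\iint (1 - e^{-s g(x)}) \, \ddr \nu(s,x))$ by the Laplace-functional formula for $\crm \sim \textup{CRM}(\nu)$; the second factor, by the defining identity $\log \E(e^{-\lambda J}) = -\int_0^{+\infty}(1 - e^{-\lambda s}) \, \ddr \rho(s)$ applied at $\lambda = g(y)$, equals $\exp(-\int_0^{+\infty}(1 - e^{-s g(y)}) \, \ddr \rho(s)) = \exp(-\iint (1 - e^{-s g(x)}) \, \ddr (\rho \otimes \delta_y)(s,x))$. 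Multiplying the two and adding the exponents yields exactly $\exp(-\iint (1 - e^{-s g(x)}) \, \ddr(\nu + \rho \otimes \delta_y)(s,x))$, which is the Laplace functional of $\textup{CRM}(\nu + \rho \otimes \delta_y)$. By the uniqueness statement above, $\crm + J \delta_y \sim \textup{CRM}(\nu + \rho \otimes \delta_y)$.

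The only step needing a little care is invoking the uniqueness of a random measure's law from its Laplace functional and the explicit CRM Laplace-functional formula; once these are in hand the computation is immediate. As an alternative avoiding Laplace functionals, one may instead use the Poisson representation \eqref{eq:def_crm_from_levy}: write $J \eqd \sum_{s \in \mathcal{M}} s$ for a Poisson random measure $\mathcal{M}$ on $(0,+\infty)$ with intensity $\rho$, taken independent of the Poisson random measure $\mathcal{N}$ underlying $\crm$; then $J \delta_y$ is generated by the Poisson random measure $\mathcal{M} \otimes \delta_y$ on $(0,+\infty) \times \mathbb{X}$ with intensity $\rho \otimes \delta_y$, and the superposition theorem for independent Poisson random measures gives that $\mathcal{N} + \mathcal{M} \otimes \delta_y$ is Poisson with intensity $\nu + \rho \otimes \delta_y$, so that $\crm + J \delta_y = \sum_{(s,x) \in \mathcal{N} + \mathcal{M} \otimes \delta_y} s \, \delta_x \sim \textup{CRM}(\nu + \rho \otimes \delta_y)$.
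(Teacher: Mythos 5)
Your proof is correct and follows essentially the same route as the paper's: factor the Laplace functional of $\crm + J\delta_y$ using independence, identify each factor via the CRM Laplace functional and the L\'evy--Khintchine form of $J$, and combine the exponents. The extra checks you include (integrability of $\nu + \rho \otimes \delta_y$, and the alternative Poisson-superposition argument) are sound but not needed beyond what the paper does.
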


\begin{proof}
For any measurable function $f$ on $\mathbb{X}$, since $J$ and $\crm$ are independent,
\begin{align*}
\mathbb{E}  & \left( \exp \left( - \int f(x) \ddr (\crm +J \delta_y ) (x) \right) \right)   =\mathbb{E} \left(  \exp \left( - \int f(x) \ddr \crm (x) \right) \right) \mathbb{E} \left( \exp \left( - f(y) J \right) \right) \\
& \qquad = \exp \left( - \int_0^{\infty} \int_{\mathbb{X}} (1-e^{-s f(x)}) \ddr \nu(s,x)\right) \exp \left( - \int_0^{\infty} (1-e^{-s f(y)}) \ddr \rho(s) \right)  \\
& \qquad = \exp \left( - \int_0^{\infty} \int_{\mathbb{X}} (1-e^{-s f(x)}) \ddr (\nu(s,x) + \rho(s) \delta_y(x)) \right).
\end{align*}
Since CRMs are characterized by their Laplace functional, see~\eqref{eq:characterization_CRM}, this proves the result.
\end{proof}

\begin{proof}[Proof of Lemma 12]
The proof uses Lemma~\ref{th:id_jumps} stated and proved above. 

By specializing Theorem 11 to a gamma CRM, we know that there exists a latent random variable $U$ such that
\[
\crm^{*}|U \eqd \crm_{U} + \sum_{i=1}^k J^{U}_{i} \delta_{x_i^*},
\]
where $\crm_{U}$ is a gamma CRM with L\'evy intensity $s^{-1}e^{-(U+1)s} \alpha P_0$ and the jumps $J_{i} ^{U} \sim \text{gamma}(n_i, U + 1)$, where $n_i = \#\{j : x_j = x_i^*\}$ is the number of observations equal to $x_i^*$ and everything is independent. Since the gamma distribution is infinitely divisible, thanks to Lemma~\ref{th:id_jumps} this implies that $\crm^{*}|U$ is a gamma CRM with canonical L\'evy intensity 
\[
\ddr \nu_{U} (s, x) =  (\alpha + n) \frac{e^{-(U+1)s}}{s} \ddr s \bigg( \frac{\alpha}{\alpha +n} \ddr P_0(x) + \frac{1}{\alpha +n} \sum_{i=1}^n \delta_{x_i}(x) \bigg).
\]
Thus $\crm^{*}$ is a Cox CRM that is conditionally a gamma CRM. By Example 3 its corresponding Cox scaled CRM $\crm^{*} \s$ has L\'evy intensity
\[
\ddr \tilde \nu^{i} \s (s, x)|{U} = (\alpha + n) \frac{e^{(\alpha + n)s}}{s} \bigg( \frac{\alpha}{\alpha +n} \ddr P_0(x) + \frac{1}{\alpha +n} \sum_{i=1}^n \delta_{X_i}(x)\bigg). 
\]
In particular, we observe that by scaling the conditional L\'evy intensity all the randomness disappears and we are left with a deterministic L\'evy intensity. 
\end{proof}

\begin{proof}[Proof of Proposition 13]
Given Lemma 12, as both Lévy intensities are homogeneous, we can use Remark 2 and thus $\wad(\mathcal{L}(\tilde \nu^1 \s),\mathcal{L}(\tilde \nu^2 \s)) = \ad (\nu^{1*} \s, \nu^{2*} \s) = \mathcal{J} + \mathcal{A}$ with 
\begin{align*}
\mathcal{J} & = \W_* \left( (\alpha_1 + n) \frac{e^{-(\alpha_1+n)s}}{s} \ddr s, (\alpha_2 + n) \frac{e^{-(\alpha_2+n)s}}{s} \ddr s   \right), \\
\mathcal{A} &= \W_{\dXt} \bigg(\frac{\alpha_1}{\alpha_1 + n} P_{0}^1 +\frac{1}{\alpha_1 + n} \sum_{i=1}^n \delta_{x_i} , \frac{\alpha_2}{\alpha_2 + n} P_{0}^2 +\frac{1}{\alpha_2 + n} \sum_{i=1}^n \delta_{x_i}\bigg).
\end{align*}
The expression of $\mathcal{J}$ is then further expanded using Definition 3 and a linear change of variables, which gives the expression of the statement.
\end{proof}

To move forward and prove Proposition 14, the main difficulty is the analysis of the jump part. For this we prove an auxiliary result which will also be useful for the proof of Proposition 20. Indeed we develop a general framework for the study of the jump component of the distance $\ad$ on CRMs. 

\begin{proposition}
\label{prop:tail_integral_comp}
Let $U(\theta,t)$ a continuous function $[0,+\infty) \times (0,+\infty) \to \R$ such that the following assumptions hold:
\begin{enumerate}
\item $\mathcal{I}(t) = \int_0^{+ \infty} U(\theta,t) \, \ddr t$ does not depend on $\theta$. \label{lab:not_depend}
\item For any $\theta > 0$, there exists one single point $t_\theta$ such that $U(0,t_\theta) = U(\theta,t_\theta)$. \label{lab:single_inter}
\item The function $D(\theta,t) = \frac{\dr U}{\dr \theta}(\theta,t)$ is defined everywhere, continuous and satisfies the integrability condition:
\begin{equation*}
\forall \Theta > 0, \quad  \int_0^{+ \infty} \sup_{0 \leq \theta \leq \Theta} |D(\theta,t)| \ddr t \; < + \infty.
\end{equation*}
Moreover, for any $\theta \geq 0$ the function $D(\theta,\cdot)$ vanishes at a single point $s_\theta$. \label{lab:integrability}
\item For $t \geq \max(t_\theta, s_\theta)$ (or equivalently for $t \leq \min(t_\theta, s_\theta)$) the functions $U(\theta,t)- U(0,t)$ and $D(\theta,t)$ have the same sign. \label{lab:same_sign}
\end{enumerate}
Then the function
\begin{equation*}
F : \theta \mapsto \int_0^{+ \infty} |U(\theta,t) - U(0,t)| \ddr t
\end{equation*}
is strictly increasing, differentiable everywhere and
\begin{equation}
\label{eq:aux_parametric_bound_derivative}
F'(\theta) \leq \int_0^{+ \infty} |D(\theta,t)| \ddr t,
\end{equation}
with equality when $\theta = 0$.
\end{proposition}

\begin{figure}
\begin{center}
\includegraphics[scale=1.]{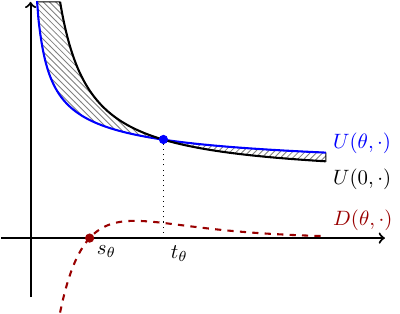}
\end{center}
\caption{Illustration of the geometric situation depicted in Proposition~\ref{prop:tail_integral_comp} with the functions $U(\theta,\cdot)$, $U(\theta,0)$ and the derivative $D(\theta, \cdot) = \frac{\dr U}{\dr \theta}(\theta,\cdot)$. The function $F(\theta)$ corresponds to the (unsigned) area between the two curves. By Assumption~\ref{lab:not_depend} the areas before and after $t_\theta$ are the same, so $F(\theta)$ is twice the area before $t_\theta$, and also twice the area after $t_\theta$.}
\label{fig:help_proof_intersection}
\end{figure}

\begin{remark}
In our examples $U(\theta, \cdot) = \rho_{\theta}(\cdot,+\infty)$ is the tail integral of a measure $\rho_{\theta}$. Assumption~\ref{lab:not_depend} can then be verified through the identity $\int_0^{+ \infty} U(\theta,t) \ddr t = \int_0^{+ \infty} s \, \ddr \rho_\theta(s)$, as we consider scaled CRMs.
\end{remark}

\begin{proof}
Up to changing $U$ into $-U$, we can assume that $U(\theta,t) - U(0,t)$ is negative on $(0,t_\theta)$ and positive on $(t_\theta,+ \infty)$. We refer to Figure~\ref{fig:help_proof_intersection} for an illustration. We can write 
\begin{equation*}
F(\theta) = \int_0^{t_\theta} ( U(0,t) - U(\theta,t) ) \ddr t + \int_{t_\theta}^{+ \infty} ( U(\theta,t) - U(0,t) ) \ddr t.
\end{equation*}
As $\int_0^{+ \infty} U(0,t) \ddr t = \int_0^{+ \infty} U(\theta,t) \ddr t$, thanks to Assumption~\ref{lab:not_depend} we can rewrite
\[
F(\theta) = 2 \int_0^{t_\theta} ( U(0,t) - U(\theta,t) ) \ddr t  =  2 \int_{t_\theta}^{+ \infty} ( U(\theta,t) - U(0,t) ) \ddr t.
\]
When we take the derivative (thanks to the integrability Assumption~\ref{lab:integrability} we can differentiate under the $\int$ sign), using that the integrand vanishes exactly at $t_\theta$, we find three different expressions of the derivative:
\[
F'(\theta )  = - \int_0^{t_\theta} D(\theta,t)  \ddr t + \int_{t_\theta}^{+ \infty} D(\theta,t) \ddr t = -  2 \int_0^{t_\theta} D(\theta,t)  \ddr t = 2 \int_{t_\theta}^{+ \infty} D(\theta,t) \ddr t.
\]
Recall that $s_\theta$ is the point where $D(\theta,t)$ vanishes. By Assumption~\ref{lab:same_sign}, we know that $D(\theta,t)$ is negative on $(0,s_\theta)$ and positive on $(s_\theta,+ \infty)$. If $ s_\theta \leq t_\theta$, using the last expression for $F'$ yields $F'(\theta) > 0$. On the other hand, if $t_\theta \leq s_\theta$, then it is by using the second expression that we find $F'(\theta) > 0$. This concludes the proof that $F$ is strictly increasing.

Eventually the first expression of the derivative yields our bound~\eqref{eq:aux_parametric_bound_derivative} when we take the absolute value with the help of the triangle equality. 

For the equality in~\eqref{eq:aux_parametric_bound_derivative} when $\theta = 0$, we first justify that $t_\theta$ converges to $s_0$ as $\theta \to 0$. Fix $t_1 < s_0 < t_2$. As $D(0,\cdot)$ changes sign around $s_0$, then $U(\theta,t_1) - U(0,t_1) \sim \theta D(0,t_1)$ and $U(\theta,t_2) - U(0,t_2) \sim \theta D(0,t_2)$ take different signs for $\theta$ small enough. By continuity of $U(\theta, \cdot)$ we deduce that $t_1 < t_\theta < t_2$ if $\theta$ is small enough. This is enough to show that $t_\theta$ converges to $s_0$ as $\theta \to 0$. Thus, taking the limit in the first expression of $F'$ we find
\begin{equation*}
\lim_{\theta \to 0} F'(\theta)  = - \int_0^{s_0} D(0,t)  \ddr t + \int_{s_0}^{+ \infty} D(0,t) \ddr t = \int_0^{+ \infty} |D(0,t)| \ddr t, 
\end{equation*}
as $D(0,\cdot)$ is negative on $(0,s_0)$ and positive on $(s_0,+\infty)$. This is enough to justify that $F'(0)$ exists and is equal to the value on the right hand side.
\end{proof}

We then specialize this result to the tail functions of the Lévy intensities of the Dirichlet process. 

\begin{proposition}
\label{prop:J_dp}
Let $F : [1,+\infty) \to \R$ defined by
\begin{equation*}
F(\alpha) = \int_0^{+\infty} |  \Gamma(0, t) - \alpha \Gamma(0,\alpha t) | \ddr t.
\end{equation*}
The function $F$ is increasing and $F(\alpha) \leq c \log(\alpha)$, for $c = \int_0^{+ \infty} |\Gamma(0,t) - e^{-t}| \ddr t$. Moreover, $F$ is differentiable with $F'(1) = c$.
\end{proposition}

\begin{proof}
We use Proposition~\ref{prop:tail_integral_comp} with $\theta = \alpha - 1$ and $U(\theta,t) = \alpha \Gamma(0,\alpha t)$.

We claim that there is a unique $t_\alpha \in (0, + \infty)$ with $\Gamma(0, t) = \alpha \Gamma(0,\alpha t)$. Moreover,  $\Gamma(0, t) - \alpha \Gamma(0,\alpha t)$ is negative on $(0,t_\alpha)$ and positive on $(t_\alpha, + \infty)$. Indeed let us call $f : t \mapsto t \Gamma(0,t)$ and $\bar t$ its point of maximal, so that the equation can be rewritten as $f(t) = f(\alpha t)$. Standard analysis yields that $f$ is strictly increasing between $0$ and $\bar t$, and strictly decreasing after. Quite easily, this yields that $f(t) < f(\alpha t)$ when $0 < t \leq \bar t/\alpha$, while $f(t) > f(\alpha t)$ when $t \geq \bar t$. Eventually, on $(\bar t/\alpha, \bar t)$ the function $f(\alpha t) - f(t)$ is strictly decreasing and continuous, and thus it vanishes at exactly one point.

Moreover, 
\begin{equation*}
\frac{\dr}{\dr \alpha} (\alpha \Gamma(0,\alpha t)) = \Gamma(0,\alpha t) - \exp(- \alpha t),
\end{equation*}
and one can check that this function also vanishes at a single point, is negative after that point and positive before. For the integrability Assumption~\ref{lab:integrability} note that for any $\alpha \geq 1$,
\begin{equation*}
\left| \Gamma(0,\alpha t) - \exp(- \alpha t) \right| \leq \Gamma(0,t) + \exp(-t), 
\end{equation*}
and the right hand side integrates to $2 < + \infty$ by Fubini's theorem. Thus by Proposition~\ref{prop:tail_integral_comp} we directly see that $F$ is increasing with
\begin{equation*}
F'(\alpha) \leq \int_{0}^{+ \infty} |\Gamma(0,\alpha t) - \exp(- \alpha t)| \ddr t = \frac{1}{\alpha} \int_0^{+ \infty} |\Gamma(0,t) - e^{-t}| \ddr t,
\end{equation*}  
and equality when $\alpha = 1$. Integrating this inequality between $1$ and $\alpha$ yields the upper bound. 
\end{proof}

We finally have all the tools to prove Proposition 14. 

\begin{proof}[Proof of Proposition 14]
The proof uses Proposition~\ref{prop:J_dp} stated and proved above. Indeed to prove 1., 2. and 3. we notice, using the notations of Proposition~\ref{prop:J_dp} and a simple change of variable that
\begin{equation*}
\mathcal{J} = \int_0^{+\infty} | (\alpha_1 +n) \Gamma(0, (\alpha_1 +n) s) - (\alpha_2 +n) \Gamma(0,(\alpha_2 + n) s) | \ddr s = F \left( \frac{\alpha_2 + n}{\alpha_1 + n} \right).
\end{equation*}
Thus 1., 2. and 3. are a simple rewriting of Proposition~\ref{prop:J_dp}, using in particular the Taylor expansion $F(\alpha) = c (\alpha - 1) +  o(\alpha - 1)$ as $F'(1) = c$. 

The proof of 4. relies on the behavior of the Wasserstein distance with respect to convex combination.
Let $\lambda_i = \alpha_i (\alpha_i + n)^{-1}$, so that $0< \lambda_1 < \lambda_2< 1$ for every $n$. Then using the convexity inequality~\eqref{eq:convex} followed by~\eqref{eq:convex_specific}, 
\begin{align*}
\mathcal{A}  & \le \lambda_1 \W_{\dXt}(P_0^1, P_0^2) + (1-\lambda_1) \W_{\dXt} \bigg( \frac{1}{n} \sum_{i=1}^n \delta_{x_i}, \frac{\lambda_2 - \lambda_1}{1 - \lambda_1} P_0^2 + \frac{1 - \lambda_2 }{1 - \lambda_1} \frac{1}{n} \sum_{i=1}^n \delta_{x_i} \bigg) \\
& = \lambda_1 \W_{\dXt}(P_0^1, P_0^2) + (\lambda_2 - \lambda_1) \W_{\dXt} \bigg(\frac{1}{n} \sum_{i=1}^n \delta_{x_i}, P_0^2 \bigg).
\end{align*}
The proof of the upper bound follows by substituting the values of $\lambda_1$ and $\lambda_2$. Moreover, if $\alpha_1 = \alpha_2$ or $P_0^1 = P_0^2$, we can reduce it to an equality thanks to~\eqref{eq:convex_specific}.
\end{proof}

\begin{proof}[Proof of Theorem 15]
With the notations of Proposition 13, and with Proposition 14, it is clear that $\mathcal{J} \asymp 1/n$ and $\mathcal{J} \leq c(\alpha_2-\alpha_1)/n$. On the other hand, 
using $\alpha_1 / (\alpha_1 + n) \leq \alpha_1/n$, as well as $n(\alpha_2 - \alpha_1)/(\alpha_1 + n)(\alpha_2 + n) \leq (\alpha_2 - \alpha_1)/n$ and that $\dXt$ (and thus $\W_{\dXt}$) is bounded by $2$, we have 
\begin{equation*}
\frac{1}{n} \lesssim \mathcal{J} + \mathcal{A} \leq \frac{\alpha_1}{n} \W_{\dXt}(P^1_0,P^2_0) + \left( c+2 \right) \frac{\alpha_2 - \alpha_1}{n}, 
\end{equation*}
at least if $\alpha_1 \leq \alpha_2$.
The conclusion $\ad(\nu^{1*} \s, \nu^{2*} \s)  \asymp 1/n$ follows. 
Then $\wbl(\mathcal{L}(\crm^{1*} \s), \mathcal{L}(\crm^{2*} \s))  \lesssim 1/n$ follows from Theorem~4.
\end{proof}

\section*{Proofs of Section 5.2 (Normalized generalized gamma process)}

\begin{proof}[Proof of Proposition 16]
The proof follows the same lines as Lemma 12. By specializing the formula for the jumps $J_i^U$ in Theorem 11 we see that they follow a gamma distribution. Since the gamma distribution is infinitely divisible, thanks to Lemma~\ref{th:id_jumps} $\crm^{*}|U$ is characterized by the L\'evy intensity
\[
\nu_U^* (s,x) = \frac{\alpha}{\Gamma(1-\sigma)} \frac{e^{-(U+1)s}}{s^{1+\sigma}} \ddr s \, \ddr P_0 (x) + \frac{e^{-(U +1)s}}{s}    \sum_{i=1}^k (n_i - \sigma) \, \ddr s \delta_{x_i^*}(x).
\]
With simple calculations we derive
\[
\E(\crm^*(\mathbb{X})|U) =  \frac{ \alpha (U+1)^{\sigma}+ n-k \sigma}{U+1}.
\]
Denote $C=C_n =  \alpha (U+1)^{\sigma}+ n-k \sigma$. Then by Lemma 9 the corresponding Cox scaled CRM $\crm^* \s$ has random L\'evy intensity
\[
\tilde \nu^* \s(\ddr s, \ddr x) = \frac{\alpha}{\Gamma(1-\sigma)} \bigg(\frac{U+1}{C} \bigg)^{\sigma} \frac{e^{-C s}}{s^{1 + \sigma}} \ddr s \, \ddr P_0 (x) + \frac{e^{-Cs}}{s}   \sum_{i=1}^k (n_i - \sigma) \, \ddr s \delta_{x_i^*}(x).
\]
We now find its canonical expression. Starting from the previous formula, by integrating out $f(s) = s$ we find the mean measure of $\crm^* \s$ to be
\[
P_0^* = \frac{\alpha ( U +1)^\sigma}{C} P_0 + \frac{1}{C}\sum_{i=1}^k (n_i - \sigma) \delta_{x_i^*}.
\]
In general it seems not trivial to find the canonical decomposition of a sum of independent homogeneous CRMs, which overall is a non-homogeneous CRM. In this case, however, we can use the fact that $P_0$ is nonatomic, so that every component of the sum has different support. Thus, the disintegration of $\tilde \nu^* \s$ with respect to $P_0^*$ is
\[
\frac{C^{1-\sigma}}{\Gamma(1-\sigma)} \frac{e^{-Cs}}{s^{1+\sigma}} \mathbbm{1}_{\mathbb{X}\setminus\{x_1^*,\dots,x_k^*\}}(x) \, \ddr s +C \frac{e^{-Cs}}{s}  \mathbbm{1}_{\{x_1^*,\dots,x_k^*\}}(x) \, \ddr s.
\]
Moreover, the expression of the distribution of $U$ can be easily derived with Equation~(18) of the main text.  
\end{proof}

\begin{proof}[Proof of Proposition 17]
As $\crm^2$ is a Gamma CRM, the expression of $\tilde{\nu}^{2*}_\mathcal{S}$ can be found in Lemma 12. In particular it is deterministic and homogeneous, thus with Remark 2, 
\begin{align*}
\W_{\ad}(\law(\tilde{\nu}^{1*}_\mathcal{S}),\law(\tilde{\nu}^{2*}_\mathcal{S})) & = \E \left( \ad(\tilde{\rho}^{*1}_x(s) \ddr s \ddr \tilde{P}^{1*}_0(x), \rho^{*2}(s) \ddr s \ddr P^{2*}_0(x)) \right) \\
& = \E \left( \W_{\dXt}(\tilde{P}^{1*}_0, P^{2*}_0) + \E_{X \sim \tilde{P}^{1*}_0}( \W_*(\tilde{\rho}^{*1}_X, \rho^{2*})) \right).
\end{align*}
The first term is easily computed thanks to the explicit expressions of Lemma 12 and Proposition 16:
\begin{align*}
\E \left( \W_{\dXt}(\tilde{P}^{1*}_0, P^{2*}_0) \right) & = \E_U \left( \W_{\dXt} \bigg(\frac{\alpha ( U +1)^\sigma}{C} P_0 + \frac{1}{C}\sum_{i=1}^k (n_i - \sigma) \delta_{x_i^*}, \frac{\alpha}{\alpha + n} P_0 + \frac{1}{\alpha+n} \sum_{i=1}^k n_i \delta_{x_i^*} \bigg) \right) \\
&= \E_U(\mathcal{A}^U).
\end{align*}
For the second term, we use that $P_0$ is non-atomic and thus
\begin{equation*}
\E \left(\E_{X \sim \tilde{P}^{1*}_0}( \W_*(\tilde{\rho}^{*1}_X, \rho^{2*})) \right) = \E_U \left( \frac{\alpha (1+U)^\sigma}{C} \E_{X \sim P_0} \W_*(\tilde{\rho}^{*1}_X, \rho^{2*}) + \frac{1}{C} \sum_{i=1}^k (n_i - \sigma) \W_*(\tilde{\rho}^{*1}_{x^*_i}, \rho^{2*}) \right).
\end{equation*}
Given the explicit expressions for $\tilde{\rho}^{*1}_x$ and $\rho^{2*}$, we see that this coincides with $\E_U(\mathcal{J}^U_1 + \mathcal{J}^U_2)$. 
\end{proof}

We move to the proof of Theorem 18, on the asymptotic behavior of the variables $U_n$. We rely on the following lemma to that end.

\begin{lemma}
\label{lemma:aux_proof_latent}
Let $(Y_n)_{n \in \N}$ a sequence of random variables on $I = (a, + \infty)$ with densities proportional to $\exp(-f_n)$, for some sequence $(f_n)_{n \in \N}$ of functions from $I$ to $\R$. 
We assume:
\begin{enumerate}
\item Each $f_n$ is smooth, convex, and minimized at a single point $r_n \in I$, with the sequence $(r_n)_{n \in \N}$ diverging to $+ \infty$;
\item 
There exist $0 < \alpha < \beta < + \infty$ and $\gamma > 1/2$ such that, for every $n$ large enough, for $x \in [r_n - r_n^\gamma, r_n + r_n^\gamma]$, $\alpha \leq r_n f_n''(x) \leq \beta$.
\end{enumerate}
Then, for the topology of $L^1$ convergence there holds
\begin{equation*}
\lim_{n \to + \infty} \frac{Y_n}{r_n} = 1. 
\end{equation*}
\end{lemma}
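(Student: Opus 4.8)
The plan is to show that $Y_n/r_n \to 1$ in $L^1$ by a quantitative Laplace-type argument, exploiting convexity of $f_n$ to control the tails and the uniform bound on $f_n''$ near $r_n$ to pin down the scale of concentration. First I would change variables, writing $Y_n = r_n(1 + Z_n/\sqrt{r_n})$, so that $Z_n$ has density proportional to $\exp(-g_n(z))$ with $g_n(z) = f_n(r_n + z\sqrt{r_n}) - f_n(r_n)$ on the interval $J_n = (\sqrt{r_n}(a/r_n - 1), +\infty)$. The goal $Y_n/r_n \to 1$ in $L^1$ is equivalent to $\E(|Z_n|)/\sqrt{r_n} \to 0$, and in fact I will prove the stronger statement that $\E(|Z_n|)$ stays bounded (indeed $\E(Z_n^2) = O(1)$), which immediately gives the claim since $1/\sqrt{r_n} \to 0$.

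The key estimates are as follows. On the window $|z| \le r_n^{\gamma - 1/2}$ (which, since $\gamma > 1/2$, grows to $+\infty$), a second-order Taylor expansion of $f_n$ around its minimizer $r_n$ together with assumption 2 gives, for $n$ large, $\tfrac{\alpha}{2} z^2 \le g_n(z) \le \tfrac{\beta}{2} z^2$, because $g_n(z) = \tfrac12 f_n''(\xi) r_n z^2$ for some $\xi$ between $r_n$ and $r_n + z\sqrt{r_n}$, and that $\xi$ lies in $[r_n - r_n^\gamma, r_n + r_n^\gamma]$ precisely when $|z| \le r_n^{\gamma-1/2}$. This yields a Gaussian upper and lower sandwich on $\exp(-g_n)$ over the window, so the normalizing constant $C_n = \int_{J_n} \exp(-g_n)$ is bounded above and below by positive constants. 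For the contribution outside the window I would use convexity of $g_n$: since $g_n$ is convex with minimum at $z = 0$ and, at the window boundary $z = \pm r_n^{\gamma - 1/2}$, already satisfies $g_n \ge \tfrac{\alpha}{2} r_n^{2\gamma - 1} \to +\infty$ with one-sided slope bounded below in absolute value by (roughly) $\alpha r_n^{\gamma - 1/2}$, the tails of $\exp(-g_n)$ decay at least exponentially beyond the window; hence $\int_{|z| \ge r_n^{\gamma-1/2}} (1 + z^2)\exp(-g_n(z))\,\ddr z \to 0$. Combining, $\E(Z_n^2) = C_n^{-1}\int_{J_n} z^2 \exp(-g_n) $ is bounded, and then $\E(|Z_n|) \le \sqrt{\E(Z_n^2)}$ is bounded, so $\E(|Z_n|)/\sqrt{r_n} \to 0$, which is exactly $L^1$ convergence of $Y_n/r_n$ to $1$.

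The main obstacle is the careful handling of the tails using only the local information in assumption 2: the bound on $f_n''$ is available merely on $[r_n - r_n^\gamma, r_n + r_n^\gamma]$, so one cannot directly control the Gaussian-type integral on all of $J_n$. The clean way around this is to use convexity as above — a convex function that has a definite positive slope at the edge of the good window grows at least linearly thereafter — so that the tail integrals are dominated by an explicit exponential with rate tending to infinity, and similarly the left endpoint issue (the domain is only a half-line $(a,+\infty)$, with $a$ possibly finite) is harmless because $r_n \to +\infty$ pushes $a$ far into the left tail where $g_n$ is already large. I would also need to check the routine but slightly fiddly point that the lower Gaussian sandwich on the window suffices to keep $C_n$ bounded away from $0$; this follows since the window eventually contains a fixed interval like $[-1,1]$ on which $\exp(-g_n) \ge \exp(-\beta/2)$.
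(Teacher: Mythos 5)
Your proposal is correct and takes essentially the same approach as the paper's proof: a two-sided Gaussian comparison for $g_n$ on the window $|x-r_n|\le r_n^\gamma$ obtained from the bound $\alpha\le r_n f_n''\le\beta$, combined with a tangent-line/convexity argument to show the tail integrals beyond the window are killed by the factor $\exp(-\tfrac{\alpha}{2}r_n^{2\gamma-1})$. The only difference is bookkeeping: you rescale to $Z_n=(Y_n-r_n)/\sqrt{r_n}$ and bound $\E(Z_n^2)=O(1)$ (giving the marginally sharper $\E|Y_n-r_n|=O(\sqrt{r_n})$), whereas the paper bounds $\E|Y_n-r_n|/r_n$ directly and accepts the $O(r_n^{\gamma-1})$ contribution from the window.
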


\begin{remark}
The second condition quantifies $f_n'' \asymp 1/r_n$ on a large enough neighborhood of $r_n$. This inequality, together with the convexity of $f_n$, enables to show that $Y_n$ indeed concentrates around $r_n$. Such condition will be satisfied in our case, and the scaling $f_n''(r_n) \asymp 1/r_n$ comes from the logarithms in the expression of $f_n$. 
\end{remark}

\begin{proof}
Without loss of generality, we can assume $\gamma < 1$ so that $\gamma \in (1/2,1)$. 

\medskip

\emph{First step: simplifying the setting.}
We need to prove that 
\begin{equation*}
a_n = \frac{1}{r_n} \left( \int_I \left| x-r_n \right| \exp(-f_n(x)) \, \ddr x \right) \left( \int_I  \exp(-f_n(x)) \, \ddr x \right)^{-1}
\end{equation*}
converges to $0$ as $n \to + \infty$. We first shift the function $f_n$ by defining $g_n(x) = f_n(x-r_n) - f_n(x_n)$, which is still convex but now minimized at $g_n(0) = 0$. The bound in 2. now reads, for $x \in [- r_n^\gamma, r_n^\gamma]$, $\alpha \leq r_n g_n''(x) \leq \beta$. Moreover $g_n$ is defined on $I_n = (a-r_n,+ \infty)$.
Thus we can rewrite
\begin{equation*}
a_n = \frac{1}{r_n} \left( \int_{I_n} |x| \exp(-g_n(x)) \, \ddr x \right) \left( \int_{I_n}  \exp(-g_n(x)) \, \ddr x \right)^{-1}.
\end{equation*}
%
We set $J_n = [-r_n^\gamma, r_n^\gamma]$ and introduce 
\begin{equation*}
b_n = \int_{J_n}  \exp(-g_n(x)) \, \ddr x, \qquad c_n = \int_{I_n \setminus J_n}  |x| \exp(-g_n(x)) \, \ddr x. 
\end{equation*}
We see clearly that $\int_{J_n} |x| \exp(-g_n(x)) \, \ddr x \leq r_n^\gamma b_n$. Thus, bounding from below the denominator of $a_n$ by $b_n$,
\begin{equation*}
a_n\leq \frac{\int_{I_n} \left| x \right| \exp(-g_n(x)) \, \ddr x}{r_n b_n} = \frac{\int_{J_n} \left| x \right| \exp(-g_n(x)) \, \ddr x + \int_{I_n \setminus J_n} \left| x \right| \exp(-g_n(x)) \, \ddr x}{r_n b_n} \leq \frac{r_n^\gamma}{r_n} + \frac{c_n}{r_n b_n}.
\end{equation*}
The first term converges to $0$ because $\gamma < 1$, while we have more work to do to prove the convergence to zero of the second term. 

\medskip

\emph{Second step. Lower bound on $b_n$.} On $J_n = [- r_n^\gamma, r_n^\gamma]$, using $g_n(0) = 0$ together with the bound $g_n'' \leq \beta/r_n$, we find
\begin{equation*}
g_n(x) \leq \frac{\beta}{2 r_n} x^2.
\end{equation*}
Plugging this back in the expression for $b_n$,
\begin{equation*}
b_n \geq \int_{- r_n^\gamma}^{r_n^\gamma} \exp \left( - \frac{\beta }{2 r_n} x^2 \right) \,  \ddr x = \sqrt{\frac{2 \pi r_n}{\beta}} \mathrm{erf} \left( \sqrt{\frac{\beta}{2}} r_n^{\gamma - 1/2} \right),
\end{equation*}
being $\mathrm{erf}$ the error function. Sending $n$ to $+ \infty$, as $r_n^{\gamma - 1/2} \to + \infty$, we see that $b_n \gtrsim \sqrt{r_n}$ and 
\begin{equation*}
\liminf_{n \to + \infty}  \frac{b_n}{\sqrt{r_n}} \geq \sqrt{\frac{2 \pi}{\beta}}.
\end{equation*}

\medskip

\emph{Third step. Upper bound on $c_n$}. We need to analyze $c_n$. By symmetry we can focus on the integral over $[r_n^\gamma, + \infty)$. We first notice that, again thanks to the bounds on $g_n''$,
\begin{equation*}
g_n'(r_n^\gamma) \geq \alpha r_n^{\gamma - 1}, \qquad g_n(r_n^\gamma) \geq \frac{\alpha}{2} r_n^{2 \gamma - 1}. 
\end{equation*} 
As the function $g_n$ is convex, thus above its tangents, we deduce that for $x \geq r_n^\gamma$,
\begin{equation*}
g_n(x) \geq \frac{\alpha}{2} r_n^{2 \gamma - 1} + \alpha r_n^{\gamma - 1} (x - r_n^\gamma). 
\end{equation*} 
We plug this bound in the integral we want to compute to obtain, using the change of variables $y = \alpha r_n^{\gamma-1} (x-r_n^\gamma)$, 
\begin{align*}
\int_{r_n^\gamma}^{+ \infty}  x  \exp(-g_n(x))  \, \ddr x & \leq  \int_{r_n^\gamma}^{+ \infty}  x \exp\left( - \frac{\alpha}{2} r_n^{2 \gamma - 1} - \alpha r_n^{\gamma - 1} (x - r_n^\gamma)\right) \, \ddr x \\
& = \exp\left( - \frac{\alpha}{2} r_n^{2 \gamma - 1} \right) \int_{r_n^\gamma}^{+ \infty} x \exp\left( - \alpha r_n^{\gamma - 1} (x - r_n^\gamma) \right) \, \ddr x \\
& = \frac{\exp\left( - \frac{\alpha}{2} r_n^{2 \gamma - 1} \right)}{\alpha r_n^{\gamma - 1}}  \int_{0}^{+ \infty}  \left( \frac{y}{\alpha r_n^{\gamma - 1}} + r_n^{\gamma} \right) \exp(-y) \, \ddr y \\
& = \frac{\exp\left( - \frac{\alpha}{2} r_n^{2 \gamma - 1} \right)}{\alpha r_n^{\gamma - 1}} \left( \frac{1}{\alpha r_n^{\gamma - 1}} + r_n^\gamma \right). 
\end{align*}
By symmetry, $c_n$ is bounded by twice the quantity above.

\medskip

\emph{Fourth step. Putting all estimates together.} Plugging this back together we obtain
\begin{equation*}
a_n \lesssim  r_n^{\gamma - 1} + \frac{1}{r_n^{3/2}} \frac{\exp\left( - \frac{\alpha}{2} r_n^{2 \gamma - 1} \right)}{\alpha r_n^{\gamma - 1}} \left( \frac{1}{\alpha r_n^{\gamma - 1}} + r_n^\gamma \right)  . 
\end{equation*}
Sending $n \to + \infty$, the first term in the sum converges to $0$, as well as the second one as the exponential factor $\exp\left( - \frac{\alpha}{2} r_n^{2 \gamma - 1} \right)$ dominates everything as $2 \gamma - 1 > 0$.  
\end{proof}

\begin{proof}[Proof of Theorem 18]
The proof uses Lemma~\ref{lemma:aux_proof_latent} stated and proved above.

Let $h(U) = (1+U)^\sigma$. With a change of variable we obtain that the density of $h(U)$ is proportional to $\exp(-f_n(x))$, with
\[
f_n(x) =  - (k_n-1) \log(x) + \frac{\alpha}{\sigma}  x - (n-1) \log \bigg( 1- \frac{1}{x ^{1/\sigma}} \bigg) ,
\]
for $x>1$, where the subscript $k_n = k$ is introduced to underline the dependence on $n$. We will show that $h(U)$ concentrates around the point of minimum of $f_n$ through Lemma~\ref{lemma:aux_proof_latent}. A direct computation yields
\begin{align*}
f_n'(x) &= - \frac{k_n-1}{x} + \frac{\alpha}{\sigma} -  \frac{n-1 }{\sigma (x^{1+1/\sigma} - x)};\\
f_n''(x) &= \frac{k_n-1}{x^2} + (n-1)  \frac{(1+1/\sigma) x^{1/\sigma} - 1}{\sigma (x^{1+1/\sigma} - x)^2}.
\end{align*}
The last expression tells us that the function $f_n$ is indeed convex as $f_n'' \geq 0$. Moreover, $f_n$ is minimized at a single point which is a zero of $f_n'$. Thus, it is minimized at a solution $r_n$ of 
\begin{equation}
\label{eq:equation_defining_minimum}
\frac{k_n-1}{x} +  \frac{n-1}{\sigma(x^{1+1/\sigma} - x)} = \frac{\alpha}{\sigma}.
\end{equation} 

As the left hand side is the sum of non-negative terms increasing in $n$, $r_n \to + \infty$ and 
\begin{equation}
\label{eq:equation_information_a_priori}
r_n \gtrsim \max \left( k_n, n^{\sigma/(1+\sigma)} \right). 
\end{equation}
We find the asymptotic behavior of $r_n$ depending on $k_n$. We distinguish three cases.

\emph{First case}. If $k_n \ll n^{\sigma/(1+\sigma)}$, from~\eqref{eq:equation_information_a_priori} then we deduce
\begin{equation*}
\frac{k_n-1}{r_n} \ll 1.
\end{equation*}
Thus we can neglect the first term in~\eqref{eq:equation_defining_minimum} and conclude that
\begin{equation*}
\frac{n-1 }{\sigma(r_n^{1+1/\sigma} - r_n)} \sim \frac{\alpha}{\sigma} \qquad \Leftrightarrow \qquad r_n \sim \left( \frac{n}{\alpha} \right)^{\sigma/(1+\sigma)}.
\end{equation*} 

\emph{Second case}. On the other hand if $k_n \gg n^{\sigma/(1+\sigma)}$, then from~\eqref{eq:equation_information_a_priori} we deduce
\begin{equation*}
\frac{k_n-1}{r_n} \gg \frac{n-1 }{\sigma(r_n^{1+1/\sigma} - r_n)},
\end{equation*}
and thus we can neglect the second term in~\eqref{eq:equation_defining_minimum} and conclude that
\begin{equation*}
\frac{k_n-1}{r_n} \sim \frac{\alpha}{\sigma} \qquad \Leftrightarrow \qquad r_n \sim \frac{\sigma k_n}{\alpha}.
\end{equation*} 

\emph{Third case}. If $k_n \sim \lambda n^{\sigma/(1+\sigma)}$, then both terms in the left hand side of~\eqref{eq:equation_defining_minimum} are of the same order. Specifically, the reduced variable $\bar r_n = x_n/n^{\sigma/(1+\sigma)}$ will converge to a solution $\bar r$ of 
\begin{equation*}
\frac{\lambda}{\bar{r}} + \frac{1}{\sigma \bar{r}^{1+ 1/\sigma}} = \frac{\alpha}{\sigma}. 
\end{equation*}

To apply Lemma~\ref{lemma:aux_proof_latent} we need to control of the derivative $f_n''$ in each of the three cases. As $r_n \to + \infty$, $f_n''$ is asymptotically equivalent, say for $x \in [1/2r_n, 2 r_n]$, to
\begin{equation}
\label{eq:equation_second_derivative_simplified}
\frac{k_n}{x^2} + \frac{(1+1/\sigma) n}{\sigma x^{2+1/\sigma}}.
\end{equation}
We show that the last expression is asymptotically equivalent to $1/r_n$, say for $x \in [1/2r_n, 2 r_n]$. This will allow us to use Lemma~\ref{lemma:aux_proof_latent} for any value of $\gamma \in (1/2,1)$. 

\emph{First case}. If $k_n \ll n^{\sigma/(1+\sigma)}$ the first term in~\eqref{eq:equation_second_derivative_simplified} is negligible compared to the second one which behaves like 
\begin{equation*}
\frac{n}{r_n^{2+1/\sigma}} \asymp \frac{1}{n^{\sigma/(1+\sigma)}} \asymp \frac{1}{r_n}.
\end{equation*}

\emph{Second case}. If $k_n \gg n^{\sigma/(1+\sigma)}$ the second term in~\eqref{eq:equation_second_derivative_simplified} is negligible compared to the first one which behaves like $1/r_n$.

\emph{Third case}. If $k_n \sim \lambda n^{\sigma/(1+\sigma)}$ the both terms are of the same order of magnitude, and once again they both behave like $1/r_n$. 
\end{proof}

\begin{proof}[Proof of Theorem 19]
We study each term in the sum $\E(\mathcal{J}_1^U) + \E(\mathcal{J}_2^U) + \E(\mathcal{A}^U)$ sequentially, where all expectations are taken with respect to the random variable $U$. Specifically, we will show that
\[
\E(\mathcal{J}_1^U) \asymp \max(n^{-1/(1+\sigma)}, k/n ), \qquad \E(\mathcal{J}_2^U), \E(\mathcal{A}^U) \lesssim \max(n^{-1/(1+\sigma)}, k/n ).
\]

\emph{First term}. We recall that
\begin{equation*}
\mathcal{J}_1^U = \frac{\alpha (1+U)^ \sigma }{C} \W_{*}\bigg( \frac{C^{1-\sigma}}{\Gamma(1-\sigma)} \frac{e^{-Cs}}{s^{1+\sigma}} \ddr s,(\alpha+ n)\frac{e^{-(\alpha+ n)s}}{s} \ddr s\bigg).
\end{equation*}
For the extended Wasserstein distance, with a change of variable $t = C s $,
\begin{align*}
\W_{*}\bigg( \frac{C^{1-\sigma}}{\Gamma(1-\sigma)} \frac{e^{-Cs}}{s^{1+\sigma}} \ddr s,(\alpha+ n)\frac{e^{-(\alpha+ n)s}}{s} \ddr s\bigg) 
& = \int_0^{+\infty} \bigg| \frac{ C \Gamma(-\sigma, C s)}{\Gamma(1-\sigma)} - (\alpha+n) \Gamma(0, (\alpha+n) s) \bigg| \ddr s \\
& = \int_0^{+\infty} \bigg| \frac{ \Gamma(-\sigma, t)}{\Gamma(1-\sigma)} - \frac{\alpha+n}{C} \Gamma\bigg(0, \frac{\alpha+n}{C}t \bigg) \bigg| \ddr t.
\end{align*}
From this we claim that, in probability, 
\begin{equation}
\label{eq:limit_W*_aux}
\lim_{n \to + \infty} \W_{*}\bigg( \frac{C^{1-\sigma}}{\Gamma(1-\sigma)} \frac{e^{-Cs}}{s^{1+\sigma}} \ddr s,(\alpha+ n)\frac{e^{-(\alpha+ n)s}}{s} \ddr s\bigg)  = \int_0^{+\infty} \bigg| \frac{ \Gamma(-\sigma, t)}{\Gamma(1-\sigma)} - \Gamma(0, t) \bigg| \ddr t, 
\end{equation}
which can be proved by using the continuous mapping theorem as $(\alpha+n)/C$ converges in probability to $1$. Moreover, each $\W_{*}$ is bounded by $2$ as we have scaled Lévy intensities On the other hand from the notation $r_n$ coming from Theorem 18, the $L^1$ convergence implies that in probability 
\begin{equation*}
\lim_{n \to + \infty} \frac{n}{r_n}  \frac{\alpha (1+U)^ \sigma }{C} = \alpha, 
\end{equation*}
and the term is uniformly integrable as $n r_n^{-1}  \alpha (1+U)^ \sigma C^{-1} \leq (r_n^{-1} \alpha (1+U)^ \sigma )  n/(n-k \sigma)$. Putting the two pieces together, we have convergence in probability
\begin{equation*}
\lim_{n \to + \infty}  \frac{n}{r_n}  \mathcal{J}_1^U = \alpha \int_0^{+\infty} \bigg| \frac{ \Gamma(-\sigma, t)}{\Gamma(1-\sigma)} - \Gamma(0, t) \bigg| \ddr t > 0,
\end{equation*}
together with uniform integrability. Thus by taking expectation and given the expression of $r_n$ in Theorem 18 we have 
\begin{equation*}
\E \left( \mathcal{J}_1^U \right)\asymp \max \left( \frac{1}{n^{1/(1+\sigma)}}, \frac{k}{n} \right).
\end{equation*}

\emph{Second term}. We recall that
\begin{equation*}
\mathcal{J}_2^U = \frac{n-k \sigma}{C} \W_{*} \bigg( C \frac{e^{-Cs}}{s} \ddr s, (\alpha+ n)\frac{e^{-(\alpha+ n)s}}{s} \ddr s\bigg),
\end{equation*}
which is bounded by $2$ by Equation (11).
The prefactor satisfies $(n-k\sigma)/C \leq 1$.
From Proposition 14 the extended Wasserstein distance satisfies
\begin{equation*}
\W_{*} \bigg( C \frac{e^{-Cs}}{s} \ddr s, (\alpha+ n)\frac{e^{-(\alpha+ n)s}}{s} \ddr s\bigg) \sim c \left|  \frac{C}{\alpha +n} - 1 \right|, 
\end{equation*}
which again is bounded by $2$. The asymptotic analysis of Theorem 18 guarantees that, at least in probability,
\begin{equation*}
\frac{C}{\alpha +n} - 1 = \frac{\alpha (1+U)^\sigma/n - k/n \sigma - \alpha/n }{1 + \alpha/n} \lesssim \max \left( \frac{1}{n^{1/(1+\sigma)}}, \frac{k}{n} \right).
\end{equation*}
Thus the same asymptotic holds in probability for $\mathcal{J}_2^U$ thus we can deduce that it also hold in expectation because of the boundedness condition.  

\emph{Last term}. We recall that
\begin{equation*}
\mathcal{A}^U = \W_{\dXt} \bigg(\frac{\alpha ( U +1)^\sigma}{C} P_0 + \frac{1}{C}\sum_{i=1}^k (n_i - \sigma) \delta_{x_i^*}, \frac{\alpha}{\alpha + n} P_0 + \frac{1}{\alpha+n} \sum_{i=1}^k n_i \delta_{x_i^*} \bigg).
\end{equation*}
We apply the convexity property~\eqref{eq:convex} of the Wasserstein distance to obtain
\[
\mathcal{A}^U \le \lambda_1 \W_{\dXt} \bigg(P_0 , \frac{1}{n} \sum_{i=1}^k n_i \delta_{x_i^*} \bigg) + \lambda_2 \W_{\dXt} \bigg(\frac{1}{n-k \sigma} \sum_{i=1}^k (n_i- \sigma) \delta_{x_i^*}, \frac{1}{n}\sum_{i=1}^k n_i \delta_{x_i^*} \bigg),
\]
where
\begin{equation*}
\lambda_1 = \frac{\alpha(1+U)^\sigma}{C} - \frac{\alpha}{\alpha+n}, \qquad \lambda_2 = 1 - \frac{\alpha (1+U)^\sigma}{C}.
\end{equation*}
From the previous discussion we know that $\E (\lambda_1) \lesssim \max(n^{-1/(1+\sigma)}, k/n )$ while $\E (\lambda_2) \sim 1$. As $\W_{\dXt}$ is bounded by $2$, the first term of the sum is bounded by $\max(n^{-1/(1+\sigma)}, k/n )$ in expectation.  We then use one last time the convexity inequality~\eqref{eq:convex} by noting that 
\begin{equation*}
\frac{1}{n}\sum_{i=1}^k n_i \delta_{X_i^*} = \frac{n - k \sigma}{n} \frac{1}{n - k \sigma}\sum_{i=1}^k (n_i - \sigma) \delta_{x_i^*} + \frac{k \sigma}{n} \frac{1}{k} \sum_{i=1}^k  \delta_{x_i^*}
\end{equation*}
which yields
\begin{equation*}
\W_{\dXt} \bigg(\frac{1}{n-k \sigma} \sum_{i=1}^k (n_i- \sigma) \delta_{X_i^*}, \frac{1}{n}\sum_{i=1}^k n_i \delta_{x_i^*} \bigg) 
 \leq \frac{k \sigma}{n} \W_{\dXt} \left( \frac{1}{n - k \sigma}\sum_{i=1}^k (n_i - \sigma) \delta_{x_i^*}, \frac{1}{k} \sum_{i=1}^k  \delta_{x_i^*} \right). 
\end{equation*}
As $\W_{\dXt}$ is bounded by $2$, we conclude that this term is bounded by $2k \sigma/n$. Summing up,
\begin{equation*}
\E (\mathcal{A}^U) \lesssim \max \left( \frac{1}{n^{1/(1+\sigma)}}, \frac{k}{n} \right) + \frac{k}{n} \lesssim \max \left( \frac{1}{n^{1/(1+\sigma)}}, \frac{k}{n} \right). \qedhere
\end{equation*}
\end{proof}

\begin{proof}[Proof of Proposition 20]
Since $P$ is continuous $k = n$ almost surely. We consider a realization of $X_n$ with $k=n$, and thus the only source of randomness now comes from $U$ and $C$. With a similar reasoning to Proposition 19, we will show
\begin{equation*}
\E(\mathcal{J}_1^U) \sim \sigma \int_0^{+\infty} \bigg| \frac{ \Gamma(-\sigma, t)}{\Gamma(1-\sigma)} - \Gamma(0, t) \bigg| \ddr t, \qquad
\E(\mathcal{J}_2^U) \ll 1, \qquad
 \E(\mathcal{A}^U) \sim  \sigma \W_{\dXt}(P_0,P).
\end{equation*}
First of all we observe that from Theorem 18 and the definition of $C$,
\begin{equation*}
\lim_{n \to + \infty} \frac{\alpha (1+U)^\sigma}{C} = \sigma, \quad
\lim_{n \to + \infty} \frac{n - k \sigma}{C} = 1 - \sigma, \quad
\lim_{n \to + \infty} \frac{C}{n} = 1, 
\end{equation*}
and all the limits hold in $L^1$. 

\emph{First term}. We recall that
\begin{equation*}
\mathcal{J}_1^U = \frac{\alpha (1+U)^ \sigma }{C} \W_*\bigg( \frac{C^{1-\sigma}}{\Gamma(1-\sigma)} \frac{e^{-Cs}}{s^{1+\sigma}} \ddr s,(\alpha+ n)\frac{e^{-(\alpha+ n)s}}{s} \ddr s\bigg).
\end{equation*}
The prefactor converges in $L^1$ to $\sigma$ while staying bounded by $1$. 
From~\eqref{eq:limit_W*_aux} and using that $\W_*$ is always bounded by $2$, by dominated convergence we have
\begin{equation*}
\lim_{n \to + \infty} \E \left( \W_{*}\bigg( \frac{C^{1-\sigma}}{\Gamma(1-\sigma)} \frac{e^{-Cs}}{s^{1+\sigma}} \ddr s,(\alpha+ n)\frac{e^{-(\alpha+ n)s}}{s} \ddr s\bigg) \right) = \int_0^{+\infty} \bigg| \frac{ \Gamma(-\sigma, t)}{\Gamma(1-\sigma)} - \Gamma(0, t) \bigg| \ddr t,
\end{equation*}
The asymptotics for the first term follows.

\emph{Second term}. We recall that
\begin{equation*}
\mathcal{J}_2^U = \frac{n-k \sigma}{C} \W_{*}\bigg( C \frac{e^{-Cs}}{s} \ddr s, (\alpha+ n)\frac{e^{-(\alpha+ n)s}}{s} \ddr s\bigg).
\end{equation*}
The prefactor stays bounded by $1$. For the extended Wasserstein distance we use the bound of Proposition 14 to achieve
\begin{equation*}
\W_{*}\bigg( C \frac{e^{-Cs}}{s} \ddr s, (\alpha+ n)\frac{e^{-(\alpha+ n)s}}{s} \ddr s\bigg) \lesssim  \log \bigg(\left| \frac{C}{\alpha+n} \right| \bigg). 
\end{equation*}
As $C/(\alpha + n)$ converges in $L^1$ to $1$, while the logarithm is sublinear at $+ \infty$, we easily conclude that $\E(\mathcal{J}_2^U)$ converges to $\log(1) = 0$.

\emph{Last term}. We recall that
\begin{equation*}
\mathcal{A}^U = \W_{\dXt} \bigg(\frac{\alpha ( U +1)^\sigma}{C} P_0 + \frac{(1 - \sigma)}{C}\sum_{i=1}^n  \delta_{x_i}, \frac{\alpha}{\alpha + n} P_0 + \frac{1}{\alpha+n} \sum_{i=1}^n \delta_{x_i} \bigg),
\end{equation*}
where we have used that here the sequences $(x_n)_{n \geq 1}$ and $(x_k^*)_{k \geq 1}$ coincide. The empirical distribution of $(x_n)_{n \geq 1}$ converges to $P$ for the topology of weak convergence. Thus, 
\begin{equation*}
\frac{\alpha ( U +1)^\sigma}{C} P_0 + \frac{(1 - \sigma)}{C}\sum_{i=1}^n  \delta_{x_i}  \rightarrow \sigma P_0 + (1-\sigma ) P; \qquad \frac{\alpha}{\alpha + n} P_0 + \frac{1}{\alpha+n} \sum_{i=1}^n \delta_{x_i} \rightarrow P.
\end{equation*}
Since the Wasserstein distance $\W_{\dXt}$ metrizes this convergence, we conclude that 
\begin{equation*}
\E(\mathcal{A}^U) \rightarrow \W_{\dXt}(\sigma P_0 + (1- \sigma) P, P) = \sigma \W_{\dXt}(P_0, P),
\end{equation*}
where the last equality follows by~\eqref{eq:convex_specific}. 

\emph{Proving that the limit is increasing}. Eventually we justify that the limit is increasing in $\sigma$. To that end, we only need to prove that the function
\begin{equation*}
G : \sigma \mapsto \int_0^{+\infty} \bigg| \frac{ \Gamma(-\sigma, t)}{\Gamma(1-\sigma)} - \Gamma(0, t) \bigg| \ddr t
\end{equation*}
is increasing in $\sigma$. We check that the assumptions of Proposition~\ref{prop:tail_integral_comp} apply with $\theta = \sigma$ and $U(\theta,t) = \frac{ \Gamma(-\sigma, t)}{\Gamma(1-\sigma)}$.
We first claim that there exists a unique point $t_\sigma$ where the integrand in $G$ vanishes, and that the integrand is positive before $t_\sigma$ and negative after. Indeed, let us introduce
\begin{equation*}
f_\sigma(t) = \Gamma(0, t) - \frac{\Gamma(-\sigma, t)}{\Gamma(1 - \sigma)}.
\end{equation*}
It is easy to check that $\lim_{t \to 0} f_\sigma(t) = - \infty$ while $\lim_{t \to + \infty} f_\sigma(t) = 0$. Moreover, since
\begin{equation*}
f'_\sigma(t) = - \frac{e^{-t}}{t} + \frac{e^{-t}}{t^{1+\sigma} \Gamma(1-\sigma)},
\end{equation*}
we see that $f_\sigma$ is first increasing and then decreasing. Thus it admits a unique zero, and it is positive before its zero and negative after. Then we study the derivative by introducing
\begin{equation*}
D(\sigma,t) = \frac{\dr }{\dr \sigma}\left[ \frac{\Gamma(-\sigma, t)}{\Gamma(1 - \sigma)} \right].
\end{equation*}
We recall that the derivative of the Gamma incomplete function satisfies
\begin{equation*}
\frac{\dr \Gamma(z,t) }{\dr z} = - \int_t^{+ \infty} \log (x) \, x^{1-z} e^{-x} \, \ddr x.
\end{equation*} 
A straightforward computation then leads to
\begin{equation*}
D(\sigma,t) = \frac{1}{\Gamma(1-\sigma)} \left\{ \frac{\Gamma'(1-\sigma)}{\Gamma(1-\sigma)} \Gamma(- \sigma, t) -  \int_{t}^{+ \infty} \log (s) \frac{e^{-s}}{s^{1+\sigma}} \, \ddr s  \right\}.
\end{equation*}
From this expression we see that
\begin{equation*}
\lim_{t \to 0} D(\sigma,t) = + \infty, \qquad \lim_{t \to + \infty} D(\sigma,t) = 0. 
\end{equation*}
Moreover, $D(\sigma, \cdot)$ is decreasing until $a_\sigma = \exp ( \Gamma'(1-\sigma) / \Gamma(1-\sigma))  < 1$ and increasing after. Thus it vanishes at a single point $s_\sigma \in (0,a_\sigma)$, it is positive on $(0,s_\sigma)$ and negative after. Next, we need to check the integrability Assumption~\ref{lab:integrability}. Fix $\Sigma \in (0,1)$. Notice first that $\Gamma(1-\sigma)$ and $\Gamma'(1-\sigma)$ are uniformly bounded for $\sigma \in [0,\Sigma]$. Up to a constant $c_\Sigma$ which depends only on $\Sigma$ we deduce that for $\sigma \in [0,\Sigma]$, $|D(\sigma,t)|$ is bounded from above by 
\begin{equation*}
c_\Sigma \left( |\Gamma(-\sigma,t)| + \int_{t}^{+ \infty} |\log (s)| \frac{e^{-s}}{s^{1+\sigma}} \, \ddr s \right) \leq c_\Sigma  \int_{t}^{+ \infty}  (1+ |\log (s)|) \frac{e^{-s}}{\min(s, s^{1+\Sigma})} \, \ddr s .
\end{equation*}
We integrate the expression above in $t$ and, by Fubini's theorem, we find that
\begin{align*}
\int_0^{+ \infty} \sup_{\sigma \in [0,\Sigma]} \left| D(t,\sigma) \right| \ddr t  &\leq c_\Sigma \int_0^{+\infty} \left( \int_{t}^{+ \infty}  (1+ |\log (s)|) \frac{e^{-s}}{\min \left(s, s^{1+\Sigma} \right)} \, \ddr s \right) \ddr t \\
 &= c_\Sigma \int_0^{+\infty} (1+|\log (s)|) \max \left(1,s^{-\Sigma} \right) e^{-s} \, \ddr s < + \infty. 
\end{align*}
We checked all assumption of Proposition~\ref{prop:tail_integral_comp}, thus we conclude that $G$ is increasing in $\sigma$.
\end{proof}

\bibliography{bibmerging}